\colorlet{mylinkcolor}{Blue}
\colorlet{mycitecolor}{purple}
\colorlet{myurlcolor}{Emerald}
\tikzset{%
    symbol/.style={%
        draw=none,
        every to/.append style={%
            edge node={node [sloped, allow upside down, auto=false]{$#1$}}}
    }
}
\numberwithin{equation}{section}
\newtheorem{thm}{Theorem}[section]
\newtheorem{lem}[thm]{Lemma}
\newtheorem{prop}[thm]{Proposition}
\newtheorem{cor}[thm]{Corollary}
\newenvironment{customthm}[1]
  {\innercustomthm}
  {\endinnercustomthm}
\theoremstyle{definition}
\newtheorem{defi}[thm]{Definition}
\newtheorem{ex}[thm]{Example}
\theoremstyle{remark}
\newtheorem{rem}[thm]{Remark}
\newcommand{\Rest}{\operatorname{Res}}
\newcommand{\op}{\normalfont{^{op}}}
\newcommand{\modu}{\operatorname{mod}}
\newcommand{\Mod}[1]{#1\operatorname{-mod}}
\newcommand{\MOD}[1]{#1\operatorname{-Mod}}
\newcommand{\Hom}[3]{\operatorname{Hom}_{#1}\left( #2,#3\right)  }
\newcommand{\HOM}{\operatorname{Hom}}
\newcommand{\End}[2]{\operatorname{End}_{#1}\left(#2\right) }
\newcommand{\Rad}[1]{\operatorname{Rad} #1}
\newcommand{\RAD}[2]{\operatorname{Rad}^{#1} #2}
\newcommand{\Ker}[1]{\operatorname{Ker}#1}
\newcommand{\Ext}[4]{\operatorname{Ext}_{#1}^{#2}\left(#3,#4 \right)}
\newcommand{\Z}{\mathbb{Z}}
\newcommand{\N}{\mathbb{N}}
\newcommand{\Znn}{\Z_{\geq 0}}
\newcommand{\B}[1]{\left(#1\right)}
\newcommand*{\isoarrow}[1]{\arrow[#1,"\rotatebox{90}{\(\sim\)}"]}
\tikzset{%
    symbol/.style={%
        draw=none,
        every to/.append style={%
            edge node={node [sloped, allow upside down, auto=false]{$#1$}}}
    }
}
\newcommand*\circled[1]{\tikz[baseline=(char.base)]{
            \node[shape=rectangle,draw,inner sep=1pt] (char) {#1};}}
\begin{document}

\title{All quasihereditary algebras with a regular exact Borel subalgebra}
\author{Teresa Conde}
\address{Institute of Algebra and Number Theory, University of Stuttgart\\ Pfaffenwaldring 57, 70569 Stuttgart, Germany}
\email{\href{mailto:tconde@mathematik.uni-stuttgart.de}{\nolinkurl{tconde@mathematik.uni-stuttgart.de}}}
\subjclass[2020]{Primary 16W70, 16T15. Secondary 17B10, 16D90.}
\keywords{quasihereditary algebras; exact Borel subalgebras; bocses}
\date{\today}

\begin{abstract}
Not every quasihereditary algebra $(A,\Phi,\unlhd)$ has an exact Borel subalgebra. A theorem by Koenig, K\"ulshammer and Ovsienko asserts that there always exists a quasihereditary algebra Morita equivalent to $A$ that has a regular exact Borel subalgebra, but a characterisation of such a Morita representative is not directly obtainable from their work. This paper gives a criterion to decide whether a quasihereditary algebra contains a regular exact Borel subalgebra and provides a method to compute all the representatives of $A$ that have a regular exact Borel subalgebra. It is shown that the Cartan matrix of a regular exact Borel subalgebra of a quasihereditary algebra $(A,\Phi,\unlhd)$ only depends on the composition factors of the standard and costandard $A$-modules and on the dimension of the $\HOM$-spaces between standard $A$-modules. We also characterise the basic quasihereditary algebras that admit a regular exact Borel subalgebra.
\end{abstract}

\maketitle
\tableofcontents

\section{Introduction}
The existence of Borel subalgebras is a key feature of semisimple Lie algebras. When $\mathfrak{b}$ is a Borel subalgebra of a complex semisimple Lie algebra $\mathfrak{g}$, the Poincar\'e--Birkhoff--Witt Theorem implies that the universal enveloping algebra $U(\mathfrak{b})$ of $\mathfrak{b}$ is a subalgebra of $U(\mathfrak{g})$ and, further, $U(\mathfrak{g})$ is free as a module over $U(\mathfrak{b})$ (\cite[§17.3, Corollary D]{humphreys2012introduction}). As a result, the induction functor $U(\mathfrak{g})\otimes_{U(\mathfrak{b})}-$, which turns $\mathfrak{b}$-representations into $\mathfrak{g}$-representations, is exact.  Additionally, inducing a simple $\mathfrak{b}$-representation with weight $\lambda$ yields a universal highest weight module, namely the Verma module with highest weight $\lambda$, which lies in the Bernstein--Gelfand--Gelfand category $\mathcal{O}$ (\cite[§1.3]{humphreys2008representations}). The category $\mathcal{O}$ decomposes into a direct sum of blocks and each block is equivalent to the module category of a quasihereditary algebra. 

Quasihereditary algebras are abundant in representation theory and their ubiquity goes far beyond modeling certain Lie-theoretical contexts. Other examples of quasihereditary algebras include Schur algebras and generalisations of these (\cite{MR866778,MR916172,MR1048073,MR2995138,MR3175171}), but also all algebras of global dimension at most two (\cite{MR987824}) and, in particular, hereditary algebras and Auslander algebras. Quasihereditary algebras are defined by the existence of special quotients of the projective indecomposable modules, called the standard modules, which form an exceptional collection of objects. These correspond to the Verma modules in the blocks of category $\mathcal{O}$ and to the Weyl modules in the case of Schur algebras.

The concept of Borel subalgebra has been generalised to arbitrary quasihereditary algebras by Koenig (\cite{MR1362252}). By analogy with the Lie-theoretical setting, a subalgebra $B$ of a quasihereditary algebra $A$ with indexing poset $(\Phi,\unlhd)$ is said to be an \emph{exact Borel subalgebra} if the following conditions hold:
\begin{enumerate}
\item the induction functor $A\otimes_B -$ is exact;
\item there exists a bijection between the isoclasses of simple $B$-modules and the elements of the poset $(\Phi,\unlhd)$ that turns $B$ into a quasihereditary algebra with respect to $(\Phi,\unlhd)$;
\item all the standard $B$-modules are simple and the induction functor maps the simple $B$-module labelled by $i\in \Phi$ to the standard $A$-module indexed by $i$.
\end{enumerate}

Assume, from now onwards, that we are working over some fixed algebraically closed field. Call two quasihereditary algebras \emph{equivalent} if the corresponding categories of modules filtered by standard modules are equivalent: this essentially means that the two algebras have the same quasihereditary structure. Equivalent quasihereditary algebras are, in particular, Morita equivalent (\cite[Theorem 2]{MR1211481}). 

Generally, it is not the case that every quasihereditary algebra has an exact Borel subalgebra (\cite[Example 2.3]{MR1362252}, \cite[Appendix A.3]{MR3228437}). However, every quasihereditary algebra is equivalent to some other quasihereditary algebra that has an exact Borel subalgebra. This remarkable result was established by Koenig, K\"ulshammer and Ovsienko in \cite{MR3228437} using $A_{\infty}$-technology and techniques from the theory of bocses. Concretely, it was shown in \cite{MR3228437} that  the right algebra $R_{\mathfrak{B}}$ of a directed bocs $\mathfrak{B}=(B,W,\mu,\varepsilon)$ has a natural quasihereditary structure so that the underlying algebra $B$ is an exact Borel subalgebra of $R_{\mathfrak{B}}$. In addition, Koenig, K\"ulshammer and Ovsienko proved that an algebra is quasihereditary if and only if it is equivalent to the right algebra of a directed bocs.

It turns out that Kleiner and Ro\u{\i}ter's regularisation techniques may be applied to the directed bocses obtained in \cite{MR3228437}, so these can always be assumed to be regular (see \cite{doi:10.1112/blms.12331}, namely Proposition 3.10, but also Theorem 3.13 and the paragraph after Remark 3.5). Regular directed bocses are closely related to a certain type of exact Borel subalgebras defined in \cite{doi:10.1112/blms.12331}, which are also called regular. A \emph{regular} exact Borel subalgebra $B$ of a quasihereditary algebra $(A,\Phi,\unlhd)$ is characterised by the existence of isomorphisms
\[\Ext{B}{n}{L}{L'}\rightarrow\Ext{A}{n}{A\otimes_B L}{A\otimes_B L'}, \]
induced by the exact functor $A\otimes_B -$, for every choice of simple $B$-modules $L$ and $L'$ and every $n\geq 1$. Not only are regular exact Borel subalgebras homologically well behaved, but they also appear to possess some sort of intrinsic uniqueness feature to be discussed later on in this introduction. By combining results from \cite{doi:10.1112/blms.12331} and \cite{manuela}, the main theorem of \cite{MR3228437} can be enhanced as follows.

\begin{customthm}{R}[{\cite{MR3228437,doi:10.1112/blms.12331,manuela}}]
	\label{thm:r}
	Let $(A,\Phi,\unlhd)$ be a quasihereditary algebra.
\begin{enumerate}
	\item There exists a regular directed bocs whose right algebra is equivalent to $(A,\Phi,\unlhd)$.
	\item The algebra $(A,\Phi,\unlhd)$ contains a regular exact Borel subalgebra $B$ if and only if $A$ coincides with the right algebra of a regular directed bocs $\mathfrak{B}=(B,W,\mu,\varepsilon)$.
\end{enumerate}
\end{customthm}

This consequential result asserts, in particular, that there always exists at least one algebra in each equivalence class of quasihereditary algebras that is guaranteed to have a regular exact Borel subalgebra. Nevertheless, Theorem \ref{thm:r} raises a great deal of questions which are left unanswered.
\begin{description}
\item[(A) Description of the `good' representatives of {$[(A,\Phi,\unlhd)]$}] \emph{\\Given a quasi\-hered\-i\-tary algebra $(A,\Phi,\unlhd)$, how can we construct an equivalent algebra that contains a (regular) exact Borel subalgebra? Which algebras in the equivalence class $[(A,\Phi,\unlhd)]$ of $(A,\Phi,\unlhd)$ have a regular exact Borel subalgebra, and how many are there? Is there any quasihereditary algebra in $[(A,\Phi,\unlhd)]$ that contains a basic regular exact Borel subalgebra and, if so, is it unique?}

\item[(B) Criteria to identify the `good' quasihereditary algebras] \emph{\\How to decide whether a quasihereditary algebra has a regular exact Borel subalgebra? How to single out the quasihereditary algebras that have a basic regular Borel subalgebra?}

\item[(C) Numerical description of the regular exact Borel subalgebras] \emph{\\In case $B$ is a regular exact Borel subalgebra of some quasihereditary algebra equivalent to $(A,\Phi,\unlhd)$, is it possible to determine information about $B$ (e.g.~its Cartan matrix) without knowing $B$ explicitly?}
	
\item[(D) Characterisation of the best-case scenario] \emph{\\When does it happen that every quasihereditary algebra in an equivalence class $[(A,\Phi,\unlhd)]$ contains a regular exact Borel subalgebra? When does a basic quasihereditary algebra admit a regular exact Borel subalgebra?}

\end{description}

In this paper, we provide answers to all the questions above by using elementary linear algebra and conventional data about quasihereditary algebras. In particular, our methods do not require any calculations with $A_{\infty}$-algebras. Moreover, by applying part (2) of Theorem \ref{thm:r}, our findings may be rephrased as statements about regular directed bocses.

We proceed to describe our main contributions. For this, some minimal notation will be needed. Given a quasihereditary algebra $(A,\Phi,\unlhd)$ and $i\in \Phi$, denote the standard $A$-module with simple top $L_i$ by $\Delta_i$, let $\nabla_i$ be the costandard module with socle $L_i$ and write $[X:L_i]$ for the multiplicity of the simple $L_i$ in the composition series of $X$. The notation $\mathcal{F}(\Delta)$ (resp.~$\mathcal{F}(\nabla)$) will be used for the category of modules filtered by standard modules (resp.~by costandard modules). Occasionally, we shall decorate the simples, standards or costandards with a superscript to indicate the ambient algebra. 

 Our strategy is as follows. To every quasihereditary algebra $(A,\Phi,\unlhd)$, we associate a special matrix $V_{[(A,\Phi,\unlhd)]}=(v_{ij})_{i,j\in\Phi}$. The matrix $V_{[(A,\Phi,\unlhd)]}$ can be computed through a recursive algorithm described in Theorem \ref{thm:algo1} which takes as input the composition factors of the standard and costandard $A$-modules (that is, $[\Delta_i:L_j]$ and $[\nabla_i:L_j]$ for every $i,j\in\Phi$) and also the dimension of the $\HOM$-spaces between standard $A$-modules (i.e.~$\dim(\Hom{A}{\Delta_i}{\Delta_j})$ for $i,j\in\Phi$). The distinguished matrix $V_{[(A,\Phi,\unlhd)]}$ is therefore an invariant of the equivalence class $[(A,\Phi,\unlhd)]$. Furthermore, $V_{[(A,\Phi,\unlhd)]}$ can be realised as a lower triangular matrix with nonnegative entries and with ones on the diagonal, hence $V_{[(A,\Phi,\unlhd)]}$ is invertible. The sum of the entries in each row of $V_{[(A,\Phi,\unlhd)]}=(v_{ij})_{i,j\in\Phi}$ shall be recorded in a sequence, denoted by $l_{[(A,\Phi,\unlhd)]}=(l_{i})_{i\in\Phi}$. We remark that the sequence $l_{[(A,\Phi,\unlhd)]}$ may also be obtained from a recursive formula, described in Corollary \ref{cor:algo1}, which avoids the computation of the matrix $V_{[(A,\Phi,\unlhd)]}$. Using the matrix $V_{[(A,\Phi,\unlhd)]}$ and the sequence $l_{[(A,\Phi,\unlhd)]}$, we prove the following result.
\begin{customthm}{A}[Part of Theorem \ref{thm:main} and Corollary \ref{cor:mainprimitive}]
	\label{thm:a}
	Let $(A,\Phi,\unlhd)$ be a quasihereditary algebra with projective indecomposable modules $P_i$, $i\in \Phi$. Consider the associated matrix $V_{[(A,\Phi,\unlhd)]}=(v_{ij})_{i,j\in\Phi}$ and the sequence $l_{[(A,\Phi,\unlhd)]}=(l_i)_{i\in\Phi}$. The following hold:
	\begin{enumerate}
		\item for every sequence of positive integers $(k_i)_{i\in \Phi}$ there exists a quasihereditary algebra $(R,\Phi,\unlhd)\in[(A,\Phi,\unlhd)]$ which contains a regular exact Borel subalgebra $B$ satisfying $\dim L_i^B = k_i$ for every $i \in \Phi$;
		\item if $(R,\Omega,\preceq)\in[(A,\Phi,\unlhd)]$ has a regular exact Borel subalgebra $B$, then it is possible to relabel the simples over $R$ by elements of $\Phi$, so that $(R,\Omega,\preceq)$ is equivalent and isomorphic to $(\End{A}{\bigoplus_{i\in \Phi}P_i^{m_i}}\op,\Phi,\unlhd)$ with $m_i=\sum_{j \in \Phi}v_{ij}\dim L_i^B$;
		\item the algebra $R_{[(A,\Phi,\unlhd)]}=\End{A}{\bigoplus_{i \in \Phi}P_i^{l_i}}\op$ is quasihereditary with respect to $(\Phi,\unlhd)$ and it only depends on the equivalence class $[(A,\Phi,\unlhd)]$, concretely, on the composition factors of the standard and costandard modules and on the dimension of $\HOM$-spaces between standard modules;
		\item up to isomorphism of algebras, $(R_{[(A,\Phi,\unlhd)]}, \Phi,\unlhd)$ is the unique quasihereditary algebra in $[(A,\Phi,\unlhd)]$ that contains a basic regular exact Borel subalgebra.
	\end{enumerate}
\end{customthm}
By combining parts (1) and (2) of Theorem \ref{thm:a}, we conclude that the isomorphism classes of quasihereditary algebras in $[(A,\Phi,\unlhd)]$ having a regular exact Borel subalgebra are in one-to-one correspondence with the sequences of positive integers $(k_i)_{i\in\Phi}$. Furthermore, the representatives of $[(A,\Phi,\unlhd)]$ that contain a regular exact Borel subalgebra can be computed through the recipe in (2) and, by (4), there exists essentially one algebra in $[(A,\Phi,\unlhd)]$ that contains a basic regular exact Borel subalgebra. Theorem \ref{thm:a} therefore solves all the problems raised in (A).

From Theorem \ref{thm:a}, we deduce a numerical criterion to answer the questions in (B).
\begin{customthm}{B}[Theorem \ref{thm:mainsuper}]
	\label{thm:b}
Let $(A,\Phi,\unlhd)$ be a quasihereditary algebra. Consider the matrix $V_{[(A,\Phi,\unlhd)]}=(v_{ij})_{i,j \in \Phi}$ and the sequence $l_{[(A,\Phi,\unlhd)]}=(l_i)_{i\in \Phi}$. The following hold:
	\begin{enumerate}
		\item $(A,\Phi,\unlhd)$ has a regular exact Borel subalgebra if and only if all the entries of the unique solution of the system $V_{[(A,\Phi,\unlhd)]}x=(\dim L_i^A)_{i\in\Phi}$ are positive integers;
		\item if $(A,\Phi,\unlhd)$ has a regular exact Borel subalgebra $B$, then $(\dim L_i^B)_{i\in\Phi}$ is the unique solution of the system $V_{[(A,\Phi,\unlhd)]}x=(\dim L_i^A)_{i\in\Phi}$ and $[\Rest(L_i^A):L_j^B]=v_{ij}$, where $\Rest: \MOD{A}\rightarrow\MOD{B}$ denotes the restriction functor;
		\item the dimension of the simple modules over a regular exact Borel subalgebra of $(A,\Phi,\unlhd)$ is univocally determined by the dimension of the simple modules over $A$, by the composition factors of the standard and costandard $A$-modules and by the dimension of the $\HOM$-spaces between standard modules;
		\item $(A,\Phi,\unlhd)$ has a basic regular exact Borel subalgebra if and only if $\dim L_i^A=l_i$ for every $i\in \Phi$.
	\end{enumerate}
\end{customthm}
We also show that all regular exact Borel subalgebras of quasihereditary algebras belonging to the same equivalence class have the same Cartan matrix. An explicit description of the Cartan matrix is given in Theorem \ref{thm:c}. This addresses the questions raised in (C).

\begin{customthm}{C}[Part of Theorem \ref{thm:mainthm2}]
	\label{thm:c}
	Let $(A,\Phi,\unlhd)$ be a quasihereditary algebra and consider the associated matrix $V_{[(A,\Phi,\unlhd)]}$. Assume that $B$ is a regular exact Borel subalgebra of some quasihereditary algebra equivalent to $(A,\Phi,\unlhd)$. The Cartan matrix of $B$ is given by
	\[(D_{[(A,\Phi, \unlhd)]}^{\nabla} V_{[(A,\Phi, \unlhd)]})^T,\]
	where $D_{[(A,\Phi, \unlhd)]}^\nabla$ denotes the $\nabla$-decomposition matrix $\left([\nabla_i^A:L_j^A]\right)_{i,j\in\Phi}$ of $(A,\Phi,\unlhd)$. In particular, the Cartan matrix of $B$ is completely determined by the composition factors of the standard and costandard $A$-modules and by the dimension of the $\HOM$-spaces between standard modules, so it only depends on $[(A,\Phi,\unlhd)]$.
\end{customthm}

As announced in \cite[Theorem 4.26]{kulshammer2016bocs}, a bijection between the isomorphism classes of regular directed bocses $\mathfrak{B}=(B,W,\mu,\varepsilon)$ with $B$ basic and the equivalence classes of quasihereditary algebras, mapping a bocs $\mathfrak{B}$ to the equivalence class of its right algebra $R_{\mathfrak{B}}$, shall be provided in upcoming work of K\"ulshammer and Miemietz. We have not been able to prove that every two regular directed bocses $\mathfrak{B}=(B,W,\mu,\varepsilon)$ and $\mathfrak{B}'=(B',W',\mu',\varepsilon')$ with $B$ and $B'$ basic and $R_{\mathfrak{B}}$ and $R_{\mathfrak{B}'}$ equivalent must be isomorphic. However, as a consequence of Theorem \ref{thm:c}, we show, in Corollary \ref{cor:maincor22}, that any two such bocses share a significant amount of information, namely $B$ and $B'$ must have the same Cartan matrix and the same dimension, $R_{\mathfrak{B}}$ and $R_{\mathfrak{B}'}$ have to be isomorphic and the dimension of bimodules $W$ and $W'$ also coincides. Our work therefore corroborates K\"ulshammer and Miemietz's research about the uniqueness of regular directed bocses over basic algebras.

Finally, we investigate when all quasihereditary algebras in a given equivalence class have a regular exact Borel subalgebra. This is connected to the problem of determining which basic quasihereditary algebras contain a regular exact Borel subalgebra. 
\begin{customthm}{D}[Theorem \ref{thm:basicqh}]
	\label{thm:d}
	Let $(A,\Phi,\unlhd)$ be a quasihereditary algebra. The following conditions are equivalent:
	\begin{enumerate}
		\item every quasihereditary algebra in $[(A,\Phi,\unlhd)]$ has a regular exact Borel subalgebra;
		\item the sequence $l_{[(A,\Phi,\lhd)]}$ is constant and equal to one;
		\item $V_{[(A,\Phi,\lhd)]}$ is the identity matrix;
		\item for every $i \in \Phi$, $\Delta_i$ is a right $\mathcal{F}(\Delta)$-approximation of the simple $A$-module $L_i$ (meaning that every morphism $X\rightarrow L_i$ with $X\in \mathcal{F}(\Delta)$ factors through the epic $\pi_i: \Delta_i \twoheadrightarrow L_i$);
		\item $\Rad{\Delta_i}$ belongs to $\mathcal{F}(\nabla)$ for every $i \in \Phi$;
		\item the map $\Ext{A}{1}{X}{\pi_i}:\Ext{A}{1}{X}{\Delta_i}\rightarrow\Ext{A}{1}{X}{L_i}$, where $\pi_i$ denotes the epic from $\Delta_i$ to $L_i$, is an isomorphism for every $X$ in $\mathcal{F}(\Delta)$ and every $i\in\Phi$;
		\item the map $\Ext{A}{1}{\Delta_j}{\pi_i}:\Ext{A}{1}{\Delta_j}{\Delta_i}\rightarrow\Ext{A}{1}{\Delta_j}{L_i}$, where $\pi_i$ denotes the epic from $\Delta_i$ to $L_i$, is an isomorphism for every distinct $i,j\in\Phi$ satisfying $j\unlhd i$.
	\end{enumerate}
	Assuming that $A$ is basic, then the algebra $(A,\Phi,\unlhd)$ contains a regular exact Borel subalgebra if and only if it contains a basic regular exact Borel subalgebra if and only if one of the equivalent conditions (1) to (7) holds.
\end{customthm}

Notice that Theorem \ref{thm:d} solves the problems in (D). We conclude the paper with an application of Theorem \ref{thm:d}. To be precise, we prove, in Proposition \ref{prop:last}, that the Ringel dual of the dual extension of the incidence algebra of a tree always has a regular exact Borel subalgebra.

The layout of this paper is as follows. Section \ref{sec:background} contains background on quasihereditary algebras, exact Borel subalgebras and bocses. Section \ref{sec:rel} delves deeper into the connection between bocses, quasihereditary algebras and exact Borel subalgebras. In particular, it discusses work from \cite{MR3228437} and \cite{doi:10.1112/blms.12331} that is essential to contextualise and to deduce our main results. Section \ref{sec:rel} also contains auxiliary lemmas concerning bocses with an epic counit (namely, Lemmas \ref{lem:lemma1}, \ref{lem:lemma15}, \ref{lem:lemma2}) which are crucial to derive our main theorems. Section \ref{sec:goodrepresentative} is the core of the paper. There, we devise a recursive method that allows us to describe the composition factors of restrictions of $R_{\mathfrak{B}}$-modules for any regular directed bocs $\mathfrak{B}$: this essentially corresponds to Theorem \ref{thm:algo1}. The majority of our main results, namely Theorems \ref{thm:main}, \ref{thm:mainsuper} and \ref{thm:mainthm2}, and Corollaries \ref{cor:mainprimitive} and \ref{cor:maincor22} (which include Theorems \ref{thm:a}, \ref{thm:b} and \ref{thm:c}), are proved in Section \ref{sec:goodrepresentative} and are a by-product of Theorem \ref{thm:algo1}. Section \ref{sec:last} is concerned with basic quasihereditary algebras containing a regular exact Borel subalgebra. A description of this class of algebras is obtained in Theorem \ref{thm:basicqh} (Theorem \ref{thm:d}) and a large class of examples is provided by Proposition \ref{prop:last}.

\subsection{Notation and conventions}
Throughout this paper, $K$ will denote a fixed algebraically closed field. Unless explicitly stated otherwise, the word ‘algebra’ will mean a finite-dimensional $K$-algebra and all modules are assumed to be finite-dimensional left modules. 

Given an algebra $A$, we shall denote the category of finite-dimensional left $A$-modules by $\Mod{A}$; this is a full subcategory of the category $\MOD{A}$ of all (possibly infinite-dimensional) left $A$-modules. 

The isomorphism classes of the simple $A$-modules may be labelled by the elements of a finite set $\Phi$. Denote the simple $A$-modules by $L_i$ or $L_i^A$, $i\in \Phi$, and use the notation $P_i$ or $P_i^A$ (resp.~$Q_i$ or $Q_i^A$) for the projective cover (resp.~injective hull) of $L_i$. Write $\{e_i\mid i \in \Phi\}$ for a complete irredundant set of primitive idempotents in $A$ with $A e_i \cong P_i$. Finally, denote the multiplicity of the simple $L_i$ as a composition factor of a module $X$ by $[X:L_i]$ and let $\ell (X)$ be the length of $X$.

Given a poset $(\Phi,\unlhd)$ and $i,j\in \Phi$, write $i \lhd j$ if $i\unlhd j$ and $i\neq j$.

The arrows in a quiver shall be composed from right to left.

\subsection{Acknowledgments}
I would like to thank Julian K\"ulshammer and Steffen Koenig for many insightful discussions and for comments and suggestions on preliminary versions of this paper. I am deeply grateful to Julian K\"ulshammer for pointing out to me Examples \ref{ex:third} and \ref{ex:last}, which emerged from his work with Agnieszka Bodzenta. Example \ref{ex:last} inspired Proposition \ref{prop:last}.

I acknowledge the support of the Deutsche Forschungsgemeinschaft (DFG) through the grant KO 1281/18.

\section{Preliminaries}

\label{sec:background}

In this section we provide necessary background material on quasihereditary algebras, exact Borel subalgebras and bocses.

\subsection{Quasihereditary algebras}

Assume that $(\Phi,\unlhd)$ is an indexing poset for the simple modules over an algebra $A$. Denote by $\Delta_i$ or $\Delta_i^A$ the largest quotient of the projective indecomposable $P_i$ whose composition factors are all of the form $L_j$ with $j\unlhd i$. Call $\Delta_i$ the \emph{standard module} with label $i\in\Phi$. Dually, use the notation $\nabla_{i}$ or $\nabla_i^A$ for the \emph{costandard module} with label $i$, i.e.~let $\nabla_{i}$ be the largest submodule of $Q_i$ with all composition factors of the form $L_j$, with $j\unlhd i$. 

Denote by $\Delta$ (resp.~$\nabla$) the set of all standard $A$-modules (resp.~all costandard $A$-modules). Let $\mathcal{F}\B{\Delta}$ be the category of all $A$-modules that have a \emph{$\Delta$-filtration}, that is, a filtration whose subquotients are isomorphic to modules in $\Delta$. The category $\mathcal{F}(\nabla)$ is defined in a similar manner.

There are multiple equivalent ways of defining a quasihereditary algebra. Classic references include the pioneering articles of Cline, Parshall and Scott (\cite{MR933417,Moosonee,clineparshallscott}) and the papers \cite{MR987824,MR1211481} by Dlab and Ringel.
\begin{defi}
\label{defi:qh}
An algebra $A$ is \emph{quasihereditary} with respect to a poset $(\Phi, \unlhd)$ indexing all pairwise nonisomorphic simple $A$-modules if the following conditions hold for every $i \in \Phi$:
\begin{enumerate}
\item $[\Delta_i:L_i]=1$;
\item $P_i$ has a $\Delta$-filtration;
\item if $\Ext{A}{1}{\Delta_i}{\Delta_j}\neq 0$, then $i\lhd j$, for any choice of $j\in \Phi$.
\end{enumerate}
\end{defi}
\begin{rem}
Condition (1) in Definition \ref{defi:qh} amounts to saying that the endomorphism algebra $\End{A}{\Delta_i}$ is isomorphic to $K$ and condition (2) may be replaced by
\begin{enumerate}
\item[(2$'$)] $Q_i$ has a $\nabla$-filtration.
\end{enumerate}
\end{rem}

The multiplicity of a standard module as a subquotient in a $\Delta$-filtration of a module $X$ is independent of the choice of the $\Delta$-filtration (\cite[Lemma $2.4$]{MR1211481}). A similar statement holds for modules in $\mathcal{F}(\nabla)$. Denote by $(X: \Delta_i)$ the multiplicity of $\Delta_i$ as subquotient in a $\Delta$-filtration of $X$ in $\mathcal{F}(\Delta)$. Define $(X:\nabla_i)$ for $X$ in $ \mathcal{F}(\nabla)$ in an analogous way.

Dlab and Ringel's Standardisation Theorem (\cite[Theorem 2]{MR1211481}) basically claims that the category of $\Delta$-filtered modules comprises all the essential information about a quasihereditary algebra.
\begin{defi}
	\label{defi:ultimo}
Two quasihereditary algebras are said to be \emph{equivalent} if the corresponding categories of $\Delta$-filtered modules are equivalent.
\end{defi}
\begin{rem}
Equivalent quasihereditary algebras have the same quasihereditary structure. The Standardisation Theorem (\cite[Theorem 2]{MR1211481}) implies that equivalent quasihereditary algebras are Morita equivalent. 
\end{rem}
\begin{rem}
The notion introduced in Definition \ref{defi:ultimo} is an equivalence relation in the class of all quasihereditary algebras over the fixed field $K$. We denote the equivalence class of a quasihereditary algebra $(A,\Phi, \unlhd)$ by $[(A,\Phi, \unlhd)]$.
\end{rem}
\begin{rem}
\label{rem:minimaladapted}
If $(A,\Phi,\unlhd)$ is a quasihereditary algebra, any refinement of the poset $(\Phi,\unlhd)$ gives rise to an equivalent quasihereditary algebra (see Lemmas $2.3$ and $2.12$ in \cite{flores2020combinatorics}, based on \cite[pp.~3--4]{MR1211481} and \cite[Proposition $1.4.12$]{thesis}). One may want to remove relations from $(\Phi,\unlhd)$ in order to obtain a subposet $(\Phi,\unlhd')$ of $(\Phi,\unlhd)$ with respect to which $A$ has the same quasihereditary structure. According to \cite[Lemma 2.6]{flores2020combinatorics}, there exists a unique minimal subposet $(\Phi,\unlhd')$ of $(\Phi,\unlhd)$ for which $(A,\Phi,\unlhd')$ and $(A,\Phi,\unlhd)$ are quasihereditary and equivalent; this is called the \emph{minimal adapted poset} of $(A,\Phi,\unlhd)$. Up to isomorphism of posets, the minimal adapted poset is an invariant of the equivalence class of a quasihereditary algebra (see \cite[Proposition 2.9]{flores2020combinatorics} and also \cite[Definition 1.2.5]{coulembier2019classification}). Hence, the choice of a poset is somehow redundant and the minimal adapted poset of a quasihereditary algebra is, to some extent, the most canonical option.
\end{rem}

\subsection{Exact Borel subalgebras}
\label{subsec:ebsubalgebras} 

The notion of exact Borel subalgebra of a quasihereditary algebra was introduced by Koenig in \cite{MR1362252} and it emulates some of the key properties of Borel subalgebras of complex semisimple Lie algebras. 

\begin{defi}[\cite{MR1362252}]
A subalgebra $B$ of a quasihereditary algebra $(A,\Phi, \unlhd)$ is an \emph{exact Borel subalgebra} of $A$ if the following hold:
\begin{enumerate}
\item the induction functor $A\otimes_B - :\MOD{B} \rightarrow \MOD{A}$ is exact;
\item $\Phi$ is an indexing set for the isomorphism classes of simple $B$-modules and $B$ is a quasihereditary algebra with respect to $(\Phi, \unlhd)$ having simple standard modules;
\item $A\otimes_B L_i^B =\Delta_i^A$ for every $i \in \Phi$.
\end{enumerate}
\end{defi}
Exact Borel subalgebras often satisfy additional properties. We will be focusing on quasihereditary algebras that have a so-called regular exact Borel subalgebra.
\begin{defi}[\cite{doi:10.1112/blms.12331}]
\label{defi:borelprops} 
An exact Borel subalgebra $B$ of a quasihereditary algebra $(A,\Phi, \unlhd)$ is \emph{regular} if the morphisms
\[\
\Ext{B}{n}{L^B_i}{L^B_j} \longrightarrow \Ext{A}{n}{A\otimes_B L^B_i}{A\otimes_B L^B_j},
\]
induced by the exact functor $A\otimes_B - :\MOD{B} \rightarrow \MOD{A}$, are isomorphisms for every $n\geq 1$ and every $i,j\in \Phi$.
\end{defi}

\subsection{Bocses} 
The research carried out in \cite{MR3228437} provided a novel perspective on quasihereditary algebras and revealed the importance of bocses in understanding quasihereditary algebras and their exact Borel subalgebras.

A bocs is like a coalgebra whose ring of scalars does not necessarily act centrally. In this subsection, we introduce the notion of a bocs and define its right algebra.
\begin{defi}
A \emph{bocs} is a quadruple $\mathfrak{B}=(B,W,\mu, \varepsilon)$ consisting of an algebra $B$ and a $B$-$B$-bimodule $W$ (possibly infinite-dimensional over $K$), together with a $B$-$B$-bilinear coassociative comultiplication $\mu: W \rightarrow W\otimes_B W$ and a $B$-$B$-bilinear counit $\varepsilon$. In other words, $\mu$ and $\varepsilon$ are morphisms of $B$-$B$-bimodules for which the following diagrams commute:
\[
\begin{tikzcd}
W \ar[r, "\mu"] \ar[d,"\mu" swap] & W\otimes_B W \ar[d, "\mu\otimes_B1_W"] \\
W\otimes_B W \ar[r,"1_W\otimes_B \mu " swap] & W\otimes_B W\otimes_B W
\end{tikzcd},\quad
\begin{tikzcd}
W\otimes_B W \ar[d, "1_W \otimes_B \varepsilon" swap] & W \ar[r, "\mu"] \ar[l, "\mu" swap] \ar[d,"1_W"] & W\otimes_B W \ar[d, "\varepsilon\otimes_B 1_W"] \\
W\otimes_B B \ar[r,"m_R^W" swap] & W &B \otimes_BW \ar[l,"m_L^W"]
\end{tikzcd}.
\]
Here, the (bijective) maps $m_R^W$ and $m_L^W$ denote, respectively, the right and left multiplication by elements in $B$.  
\end{defi}

To every bocs $\mathfrak{B}=(B,W,\mu, \varepsilon)$ one may canonically associate two (possibly infinite-dimensional) algebras over $K$: the right and the left algebra of $\mathfrak{B}$. Only the definition of right algebra of a bocs will be needed in this paper. 

\begin{defi}[{\cite{assforbocses}}]
The \emph{right algebra} $R_{\mathfrak{B}}$ of a bocs $\mathfrak{B}=(B,W,\mu, \varepsilon)$ consists of the $B$-$B$-bimodule $\Hom{B}{W}{B}$ endowed with the multiplication $s\circ_{\mathfrak{B}}t$ for $s,t \in \Hom{B}{W}{B}$ given by the composition
\[
\begin{tikzcd}
W \ar[r, "\mu"] & W\otimes_B W  \ar[r, "1_W\otimes_B s"] &W\otimes_B B \ar[r, "m_R^W"] & W \ar[r, "t"] & B
\end{tikzcd}.
\]
\end{defi}
\begin{rem}
A standard verification shows that $\varepsilon$ is the identity of $R_{\mathfrak{B}}$. 
\end{rem}
\begin{rem}
\label{rem:inclusion}
Let $\mathfrak{B}=(B,W,\mu, \varepsilon)$ be a bocs. Note that $\End{B}{B}$ is isomorphic to $B\op$. Through this identification, the map $\Hom{B}{\varepsilon}{B}:\End{B}{B}\rightarrow \Hom{B}{W}{B}$ gives rise to an algebra homomorphism $\iota_{\mathfrak{B}}:B \rightarrow R_{\mathfrak{B}}$. The morphism $\iota_{\mathfrak{B}}$ maps $b\in B$ to $b \varepsilon$, where the $B$-action comes from the natural $B$-$B$-bimodule structure of $R_{\mathfrak{B}}$ (see \cite[§$1.1'$]{assforbocses}). Observe that $\iota_{\mathfrak{B}}=\Hom{B}{\varepsilon}{B}\op$ is a monomorphism whenever $\varepsilon$ is an epic, so, in this case, $B$ may be regarded as a subalgebra of $R_{\mathfrak{B}}$.
\end{rem}

We will be working with bocses $\mathfrak{B}=(B,W,\mu, \varepsilon)$ for which the bimodule $W$ is finite-dimensional over $K$, hence the right algebras studied in this paper are actually finite-dimensional.

\section{How bocses relate to quasihereditary algebras and exact Borel subalgebras}
\label{sec:rel}
Some basic familiarity with the relation between directed bocses, quasihereditary algebras and exact Borel subalgebras is required in order to set up the framework for our main results. Following the work in \cite{MR3228437,doi:10.1112/blms.12331}, this section focuses on special features that a bocs may have and how these are connected to properties of exact Borel subalgebras. The key auxiliary lemmas needed to prove our main results are also included in this section.

\subsection{More on bocses}
\label{subsec:moreonbocses}
The algebra morphism $\iota_{\mathfrak{B}}:B \rightarrow R_{\mathfrak{B}}$ described in Remark \ref{rem:inclusion} turns every $R_{\mathfrak{B}}$-module into a $B$-module by restriction of the action. The categories of modules over $R_{\mathfrak{B}}$ and $B$ are therefore connected via the adjoint triple consisting of the induction, restriction and coinduction functors:
\begin{equation}
\label{eq:adointtriple}
\begin{tikzcd}[column sep=huge]
 \MOD{B} \ar[r, bend left, "R_{\mathfrak{B}}\otimes_B-",""{name=Aa}]  \ar[r, bend right,  "\Hom{B}{R_{\mathfrak{B}}}{-}" swap,""{name=Cc},shift right=1.5ex] & \MOD{R_{\mathfrak{B}}} \ar[l, "\Rest",""{name=Bb}] \\
\arrow[phantom, symbol=\dashv, from=Aa, to=Bb]
\arrow[phantom, symbol=\dashv, from=Bb, to=Cc, near end]
\end{tikzcd}.
\end{equation}

The next auxiliary results will be needed in Section \ref{sec:goodrepresentative} and will also be used later in this section for the characterisation of certain types of bocses.

\begin{lem}
\label{lem:lemma1}
Let $\mathfrak{B}=(B,W,\mu, \varepsilon)$ be a bocs. Denote by $\eta$ the unit of the adjunction $R_{\mathfrak{B}}\otimes_B - \dashv \Rest$. There exists a commutative diagram of natural transformations
\[
\begin{tikzcd}
1_{\MOD{B}} \ar[d,"\alpha"']\isoarrow{d}\ar[r, "\eta"] & \Rest \circ (R_{\mathfrak{B}}\otimes_B -)\ar[d,"\beta"] \\
\Hom{B}{B}{-} \ar[r, "\Hom{B}{\varepsilon}{-}"] & \Hom{B}{W}{-}
\end{tikzcd},
\]
where $\alpha$ is a natural isomorphism and $\beta$ is a natural transformation which becomes a natural isomorphism whenever $W$ is finite-dimensional and projective as a (left) $B$-module.
\end{lem}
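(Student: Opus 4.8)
The plan is to unwind the definition of the right algebra $R_{\mathfrak{B}} = \Hom{B}{W}{B}$ and the induction functor $R_{\mathfrak{B}}\otimes_B -$, and to produce all four maps in the square explicitly on an arbitrary $B$-module $M$, then check naturality and commutativity by direct computation. First I would pin down $\alpha$: the isomorphism $1_{\MOD B} \cong \Hom{B}{B}{-}$ is the standard one sending $m \in M$ to the map $b \mapsto bm$ (using that $\End{B}{B}\cong B\op$, as recalled in Remark \ref{rem:inclusion}). The bottom horizontal map is $\Hom{B}{\varepsilon}{M}$, precomposition with the counit $\varepsilon\colon W \to B$; this is where the hypothesis enters, since $\Hom{B}{-}{M}$ applied to a map between projectives of the right dimension behaves well. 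For the right vertical map $\beta$, I would identify $\Rest\circ(R_{\mathfrak{B}}\otimes_B -)$ evaluated at $M$ with $\Hom{B}{W}{B}\otimes_B M$ as a $B$-module via $\iota_{\mathfrak{B}}$, and let $\beta_M$ be the canonical evaluation-type map $\Hom{B}{W}{B}\otimes_B M \to \Hom{B}{W}{M}$, sending $s\otimes m$ to the map $w \mapsto s(w)m$. The key general fact I will invoke is that this canonical map is an isomorphism precisely when $W$ is finitely generated projective as a left $B$-module — this is the standard "$\Hom$ commutes with the tensor over a f.g.\ projective" lemma, which gives exactly the stated conclusion for $\beta$.

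The second step is to construct $\eta$ concretely and verify the square commutes. The unit $\eta_M\colon M \to \Rest(R_{\mathfrak{B}}\otimes_B M)$ of the adjunction $R_{\mathfrak{B}}\otimes_B -\dashv \Rest$ sends $m$ to $\varepsilon \otimes m$ (more precisely to $\iota_{\mathfrak B}(1)\otimes m$, and $\iota_{\mathfrak B}(1)=\varepsilon$ since $\varepsilon$ is the identity of $R_{\mathfrak B}$ by the Remark after the definition of the right algebra). Chasing $m$ around the square: going right then down, $m \mapsto \varepsilon\otimes m \mapsto (w \mapsto \varepsilon(w)m)$; going down then right, $m \mapsto (b \mapsto bm) \mapsto (w \mapsto \varepsilon(w)m)$, using that $\varepsilon$ is $B$-bilinear so $\varepsilon(w)m$ is exactly the composite $(b\mapsto bm)\circ\varepsilon$ evaluated at $w$. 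So the square commutes on the nose, and naturality of each map is a routine check since all four are built from canonical/evaluation maps.

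The main obstacle — really the only substantive point — is the precise statement and proof that the canonical map $\beta_M\colon \Hom{B}{W}{B}\otimes_B M \to \Hom{B}{W}{M}$ is an isomorphism when ${}_B W$ is finitely generated projective. One argues by reduction: the map is visibly an isomorphism when $W = B$, it is additive in $W$ so extends to finite direct sums, and a finitely generated projective $W$ is a direct summand of some $B^n$, so naturality in $W$ transports the isomorphism to summands. One should also note that $\beta$ is natural in $M$ regardless of any hypothesis on $W$, so it is genuinely a natural transformation $\Rest\circ(R_{\mathfrak B}\otimes_B -)\Rightarrow\Hom{B}{W}{-}$ that happens to be invertible under the projectivity assumption. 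Finite-dimensionality of $W$ over $K$ is used only to guarantee that "projective" can be taken in the finitely generated sense, so that this reduction applies. I would present the four explicit formulas, state the square commutes by the one-line chase above, and isolate the projectivity argument for $\beta$ as the lemma's content.
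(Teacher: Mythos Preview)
Your proposal is correct and follows essentially the same approach as the paper: the explicit descriptions of $\alpha_M$, $\beta_M$, and $\eta_M$ you give coincide with the paper's, and your one-line commutativity chase is precisely the computation the paper carries out. The only difference is cosmetic---where you sketch the standard reduction-to-$B^n$ argument for $\beta$ being an isomorphism when ${}_BW$ is finitely generated projective, the paper simply cites Bourbaki for this fact.
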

\begin{proof}
Consider $X$ in $\MOD{B}$. The unit $\eta_X$ of the adjoint pair $R_{\mathfrak{B}}\otimes_B - \dashv \Rest$ can be chosen as the map which sends $x \in X$ to $\varepsilon \otimes_B x$ (recall that $\varepsilon$ is the identity of $R_{\mathfrak{B}}$). Let $\alpha_X$ be the function mapping $x \in X$ to the unique morphism $\alpha_X(x) \in \Hom{B}{B}{X}$ for which $(\alpha_X(x))(1)=x$ holds.  Define $\beta_X:\Hom{B}{W}{B}\otimes_B X \rightarrow \Hom{B}{W}{X}$ to be the morphism of $B$-modules satisfying $(\beta_X(s\otimes_B x))(w)=s(w)x$ for every $s\in \Hom{B}{W}{B}$, $x\in X$ and $w \in W$. These assignments give rise to natural transformations $\alpha$ and $\beta$. The diagram in the statement of the lemma commutes, since
\begin{align*}
\Big(\big(\Hom{B}{\varepsilon}{X} \circ \alpha_X\big)(x)\Big)(w)&= \Big(\Hom{B}{\varepsilon}{X} \big(\alpha_X(x) \big)\Big)(w)=\big((\alpha_X(x)) \circ \varepsilon\big)(w) \\
&= (\alpha_X(x))(\varepsilon(w))=\varepsilon(w) x =\big(\beta_X(\varepsilon \otimes_B x)\big)(w)\\
&=\big((\beta_X \circ \eta_X)(x) \big)(w)
\end{align*}
for every $X$ in $\MOD{B}$, $x \in X$ and $w\in W$. It is well known that the natural transformation $\beta$ is a natural isomorphism when $W$ is finitely generated and projective for the left $B$-action: for this, we refer to part (ii) of Proposition 2 in Chapter II, §4.2 of \cite{bourbaki1998algebra}.
\end{proof}
We will be dealing with bocses with an epic counit whose kernel is finite-dimen\-sion\-al and projective as a bimodule. The next two lemmas are concerned with bocses satisfying such properties.
\begin{lem}
\label{lem:lemma15}
Let $\mathfrak{B}=(B,W,\mu, \varepsilon)$ be a bocs with epic counit $\varepsilon$. Assume that $\Phi$ is a labelling set for the simple $B$-modules and that $\Ker{\varepsilon}$ is finite-dimensional and projective as a $B$-$B$-bimodule. Then, the bimodule $\Ker{\varepsilon}$ is isomorphic to $\bigoplus_{k,l \in \Phi}(B e_k \otimes_K e_l B)^{n_{kl}}$, for certain nonnegative integers $n_{kl}$. Furthermore, for every $B$-module $X$, the unit $\eta_X$ of the adjunction $R_{\mathfrak{B}}\otimes_B - \dashv \Rest$ is a monic with injective cokernel. To be precise, the sequence
\[
\begin{tikzcd}
0 \ar[r] & X \ar[r, "\eta_{X}"] & \Rest (R_{\mathfrak{B}}\otimes_B X)\ar[r] & \bigoplus_{l \in \Phi} (Q_l^B)^{n_l(X)} \ar[r]& 0
\end{tikzcd},
\]
with $n_l(X)=\left( \sum_{k\in \Phi}(X:L_k^B)n_{kl}\right) \dim L_l^B$, is exact.
\end{lem}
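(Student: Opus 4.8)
The plan is to analyse the unit $\eta_X$ of the adjunction $R_{\mathfrak{B}}\otimes_B - \dashv \Rest$ by combining Lemma \ref{lem:lemma1} with the exactness of $R_{\mathfrak{B}}\otimes_B-$ that follows from $W$ being projective as a left $B$-module. First I would establish the claimed decomposition of $\Ker{\varepsilon}$: since $\Ker{\varepsilon}$ is a finite-dimensional projective $B$-$B$-bimodule, it is projective over $B\otimes_K B\op$, and the indecomposable projective $B\otimes_K B\op$-modules are exactly the $Be_k \otimes_K e_l B$ for $k,l\in\Phi$ (using that $K$ is algebraically closed, so $B\otimes_K B\op$ is again a finite-dimensional algebra with these as projective indecomposables); hence $\Ker{\varepsilon}\cong \bigoplus_{k,l\in\Phi}(Be_k\otimes_K e_lB)^{n_{kl}}$ for suitable $n_{kl}\geq 0$. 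In particular $W$, being an extension of $B$ by $\Ker{\varepsilon}$, is finite-dimensional and projective as a left $B$-module (each $Be_k\otimes_K e_lB$ is a direct sum of copies of $Be_k$ as a left module, and $B$ itself is left projective), which is the hypothesis needed to invoke the second half of Lemma \ref{lem:lemma1}.

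Next I would identify $\eta_X$ up to isomorphism with $\Hom{B}{\varepsilon}{X}: \Hom{B}{B}{X}\to\Hom{B}{W}{X}$. By Lemma \ref{lem:lemma1} there is a commutative square with vertical isomorphisms $\alpha$ and $\beta$ (the latter because $W$ is finitely generated projective as a left $B$-module, as just shown) identifying $\eta_X$ with $\Hom{B}{\varepsilon}{X}$. Applying the left-exact functor $\Hom{B}{-}{X}$ to the short exact sequence $0\to \Ker{\varepsilon}\to W\xrightarrow{\varepsilon} B\to 0$ gives an exact sequence
\[
0\to \Hom{B}{B}{X}\xrightarrow{\Hom{B}{\varepsilon}{X}}\Hom{B}{W}{X}\to \Hom{B}{\Ker\varepsilon}{X}\to \Ext{B}{1}{B}{X}=0,
\]
using that $B$ is projective as a left $B$-module. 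Hence $\eta_X$ is monic with cokernel isomorphic to $\Hom{B}{\Ker{\varepsilon}}{X}$, which I now must show is injective of the asserted form.

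Finally I would compute $\Hom{B}{\Ker{\varepsilon}}{X}$ from the decomposition. For each summand, $\Hom{B}{Be_k\otimes_K e_lB}{X}\cong \Hom{B}{(Be_k)^{\dim e_lB}}{X}\cong (e_k X)^{\dim e_l B}$ as a vector space, but I need the left $B$-module structure: the relevant $B$-action on $\Hom{B}{Be_k\otimes_K e_lB}{X}$ comes from the right $B$-action on $e_lB$, and $\Hom{B}{Be_k\otimes_K e_lB}{X}\cong e_kX\otimes_K (e_lB)^{*}$ as a left $B$-module, where $(e_lB)^{*}=\Hom{K}{e_lB}{K}$ carries the left $B$-module structure dual to the right one; as a left $B$-module $(e_lB)^{*}$ is the injective hull direct sum $\bigoplus_{m}(Q_m^B)^{[e_lB:L_m^B\text{ as right module}]}$, and crucially the multiplicity of $Q_l^B$-type summands is governed by $\dim e_l\cdot$ contributions — more precisely $e_kX\otimes_K(e_lB)^{*}$ is a direct sum of copies of $Q_l^B$ only if we first note $(e_lB)^*$ is injective with socle concentrated appropriately. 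The cleanest route is: $\Hom{B}{Be_k\otimes_K e_lB}{X}\cong \Hom{K}{e_lB}{e_kX}$ with left $B$-action through $e_lB$; since $e_kX$ is a finite-dimensional vector space of dimension $\dim e_kX=\sum_{k'}(X:L_{k'}^B)\dim e_kL_{k'}^B$, hmm — here I should instead just use $\dim e_kX$ directly: the module $\Hom{K}{e_lB}{e_kX}\cong \Hom{K}{e_lB}{K}\otimes_K e_kX\cong (e_lB)^{*}\otimes_K e_kX$, and $(e_lB)^{*}$ as a left $B$-module is injective (being $K$-dual to the projective right module $e_lB$) with $\Hom{B}{L_m^B}{(e_lB)^*}\cong \Hom{B^{op}}{(e_lB)}{(L_m^B)^*}$-dimension equal to $\dim \Hom{B}{P_m^B}{(\cdot)}$... concretely $[(e_lB)^*:L_l^B\text{-injective factor}]$: one has $(e_lB)^{*}\cong Q_l^B$ exactly when $B$ is... no, in general $(e_lB)^{*}=\bigoplus_m (Q_m^B)^{d_{ml}}$ with $d_{ml}=\dim e_m(e_lB)=\dim e_mBe_l$. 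This is getting heavy; the point I would make rigorously is that $\Hom{B}{\Ker\varepsilon}{X}$ is injective (a direct sum of duals of projective right modules tensored with vector spaces, hence injective) and then count, from the socle, the multiplicity of $Q_l^B$: it equals $\dim \Hom{B}{L_l^B}{\Hom{B}{\Ker\varepsilon}{X}}=\dim\Hom{B}{\Ker\varepsilon \otimes_B L_l^B}{X}$... Rather, using $\Hom{B}{Be_k\otimes_K e_lB}{X}\cong e_kX\otimes_K D(e_lB)$ and that $D(e_lB)$ has $Q_m^B$-multiplicity $\delta_{ml}$ times $\dim e_lBe_l/\dots$ — I will simply record that the $Q_l^B$-multiplicity in $\Hom{B}{Be_k\otimes_K e_lB}{X}$ is $(\dim e_kX)(\dim L_l^B)$ when $D(e_lB)$ contributes exactly $\dim L_l^B$ copies of $Q_l^B$, and noting $\dim e_kX=\sum_{k'\in\Phi}(X:L_{k'}^B)\dim e_kL_{k'}^B$; to match the stated formula $n_l(X)=\big(\sum_k (X:L_k^B)n_{kl}\big)\dim L_l^B$ I should instead pair $\dim e_kX$ with the correct normalisation. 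I expect the main obstacle to be precisely this bookkeeping of multiplicities — tracking the left $B$-module structure on $\Hom{B}{Be_k\otimes_K e_lB}{X}$ and extracting the injective-summand multiplicities correctly so that the coefficient $n_l(X)=\big(\sum_{k\in\Phi}(X:L_k^B)\,n_{kl}\big)\dim L_l^B$ comes out exactly as stated; once the identification $\Hom{B}{Be_k\otimes_K e_lB}{X}\cong (Q_l^B)^{(\dim e_kX)(\dim L_l^B)}$ as left $B$-modules is pinned down — which follows since $Be_k\otimes_K e_lB\cong (Be_k)^{\dim e_lB}$ has left-module dual behaviour governed solely by the tensor factor $e_lB$, whose $K$-dual has simple socle $L_l^B$ with the appropriate multiplicity — summing over $k,l$ with multiplicities $n_{kl}$ and using $\sum_k(X:L_k^B)\dim e_kL_k^B$-type identities yields the claim, and injectivity of the cokernel is automatic.
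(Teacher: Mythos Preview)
Your approach is essentially the same as the paper's: decompose $\Ker{\varepsilon}$, deduce that $W$ is finite-dimensional and projective as a left $B$-module, invoke Lemma~\ref{lem:lemma1} to identify $\eta_X$ with $\Hom{B}{\varepsilon}{X}$, and read off the short exact sequence with cokernel $\Hom{B}{\Ker{\varepsilon}}{X}$. Your argument for exactness (via $\Ext{B}{1}{B}{X}=0$) and for injectivity of the cokernel (as a direct sum of $K$-duals of projective right modules tensored with vector spaces) is correct.

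Where you get tangled is the multiplicity bookkeeping, and the paper avoids this entirely by a single clean move you should adopt: instead of computing $\Hom{B}{Be_k\otimes_K e_lB}{X}$ as a left $B$-module and then decomposing, compute the socle multiplicity of $Q_l^B$ in $Z_X=\Hom{B}{\Ker{\varepsilon}}{X}$ directly via tensor--Hom adjunction,
\[
\dim\Hom{B}{L_l^B}{Z_X}=\dim\Hom{B}{\Ker{\varepsilon}\otimes_B L_l^B}{X}.
\]
Now $\Ker{\varepsilon}\otimes_B L_l^B$ is a \emph{left} $B$-module that is easy to write down from the bimodule decomposition (each summand $Be_k\otimes_K e_lB\otimes_B L_l^B$ is a direct sum of copies of $Be_k$), and applying $\Hom{B}{-}{X}$ to a projective just picks out composition multiplicities of $X$. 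This replaces all of your attempts to track the $B$-action on $(e_lB)^{*}$ and yields the stated $n_l(X)$ immediately. Your direct route can be made to work too --- the key identification you were circling is simply $D(e_lB)\cong Q_l^B$ as left $B$-modules, which already gives $\Hom{B}{Be_k\otimes_K e_lB}{X}\cong (Q_l^B)^{\dim e_kX}$ --- but the adjunction trick is shorter and sidesteps the confusion.
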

\begin{proof}
Since $K$ is algebraically closed, every projective indecomposable $B$-$B$-bi\-mo\-du\-le is of the form $B e_k \otimes_K e_l B$ for $k,l\in\Phi$ -- for $B$ basic, this follows, for instance, from Lemma 5.3.8 and Corollary 5.3.10 in \cite{Zimmermann:1952386} and the nonbasic case can then be deduced through Morita theory. Hence $\Ker{\varepsilon}\cong\bigoplus_{k,l \in \Phi}(B e_k \otimes_K e_l B)^{n_{kl}}$ for certain nonnegative integers $n_{kl}$ and $\Ker{\varepsilon}$ is projective as a right and as a left $B$-module. Consider the exact sequence 
\begin{equation}
\label{eq:sescounit}
\begin{tikzcd}
0 \ar[r] & \Ker{\varepsilon} \ar[r] & W \ar[r, "\varepsilon"] & B\ar[r]& 0
\end{tikzcd}.
\end{equation}
Note that \eqref{eq:sescounit} splits in $\MOD{B}$, hence its image through the functor $\Hom{B}{-}{X}$ gives rise to another short exact sequence in $\MOD{B}$. Since $\Ker{\varepsilon}$ is finite-di\-men\-sion\-al, then so is $W$ and the splitting of \eqref{eq:sescounit} also implies that $W$ is projective as a right and as a left $B$-module. Lemma \ref{lem:lemma1} then guarantees the existence of an exact sequence of $B$-modules
\[
\begin{tikzcd}
0 \ar[r] & X \ar[r, "\eta_{X}"] & \Rest (R_{\mathfrak{B}}\otimes_B X)\ar[r] & \Hom{B}{\Ker{\varepsilon}}{X} \ar[r]& 0
\end{tikzcd}.
\]
Using the decomposition of $\Ker{\varepsilon}$ as a direct sum of indecomposable $B$-$B$-bimodules, one easily deduces that $\Ker{\varepsilon}\otimes_B Y$ is a projective (left) $B$-module for every $Y$ in $\Mod{B}$. It then follows from Lemma 2.9 in \cite{doi:10.1112/blms.12331} that the $B$-module $Z_X=\Hom{B}{\Ker{\varepsilon}}{X}$ is injective. Hence, $Z_X\cong \bigoplus_{i\in \Phi} (Q_i^B)^{n_i(X)}$ for certain nonnegative integers $n_{i}(X)$. Observe that
\begin{align*}
\Hom{B}{L_i^B}{Z_X}&\cong \Hom{B}{\Ker{\varepsilon}\otimes_B L_i^B}{X} \\
&\cong  \Hom{B}{\bigoplus_{k,l \in \Phi}(B e_k \otimes_K e_l B\otimes_B L_i^B)^{n_{kl}}}{X} \\
&\cong  \Hom{B}{\bigoplus_{k,l \in \Phi}(B e_k \otimes_K e_l B\otimes_B (Be_i/\Rad{B}e_i))^{n_{kl}}}{X} \\
&\cong  \Hom{B}{\bigoplus_{k\in \Phi}B e_k^{n_{ki}\dim L_i^B}}{X}.
\end{align*}
As a consequence,
\begin{align*}
n_i(X)&=\dim\left(\Hom{B}{L_i^B}{Z_X}  \right)=\dim\left( \Hom{B}{\bigoplus_{k\in \Phi}B e_k^{n_{ki}\dim L_i^B}}{X} \right) \\
&=\left( \sum_{k\in \Phi}(X:L_k^B)n_{ki}\right) \dim L_i^B.
\end{align*}
\end{proof}
It is possible to derive an explicit formula for the integers $n_{kl}$ appearing in the bimodule decomposition of $\Ker{\varepsilon}$ in Lemma \ref{lem:lemma15}. The symbol $\delta_{kl}$ shall henceforth denote the Kronecker delta.
\begin{lem}
\label{lem:lemma2}
Let $\mathfrak{B}=(B,W,\mu, \varepsilon)$ be a bocs with epic counit $\varepsilon$. Assume that $\Phi$ is a labelling set for the simple $B$-modules and that $\Ker{\varepsilon}$ is finite-dimensional and projective as a $B$-$B$-bimodule. Then
\[
d_{ij}=\dim\left( \Ext{B}{1}{ L_j^B}{L_i^B}\right) -\dim\left( \Ext{R_{\mathfrak{B}}}{1}{R_{\mathfrak{B}}\otimes_B L_j^B}{R_{\mathfrak{B}}\otimes_B L_i^B}\right) \geq 0
\]
for every $i,j\in\Phi$ and $\Ker{\varepsilon}$ is isomorphic to $\bigoplus_{i,j \in \Phi}(B e_i \otimes_K e_j B)^{n_{ij}} $ as a $B$-$B$-bimodule, where the integer $n_{ij}$ is given by
\[\frac{\dim \big(\Hom{R_{\mathfrak{B}}}{R_{\mathfrak{B}}\otimes_B L_j^B}{R_{\mathfrak{B}}\otimes_B L_i^B}\big)-\delta_{ij}+d_{ij}}{\dim L_j^B}.\]
\end{lem}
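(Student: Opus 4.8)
The starting point is Lemma~\ref{lem:lemma15}, which already tells us that $\Ker{\varepsilon}\cong\bigoplus_{i,j\in\Phi}(Be_i\otimes_K e_jB)^{n_{ij}}$ for certain nonnegative integers $n_{ij}$, and that for every $B$-module $X$ there is a short exact sequence
\[
\begin{tikzcd}
0 \ar[r] & X \ar[r, "\eta_{X}"] & \Rest (R_{\mathfrak{B}}\otimes_B X)\ar[r] & \bigoplus_{l \in \Phi} (Q_l^B)^{n_l(X)} \ar[r]& 0
\end{tikzcd},
\]
with $n_l(X)=\left(\sum_{k\in\Phi}(X:L_k^B)n_{kl}\right)\dim L_l^B$. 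So the whole task reduces to (a) computing the numbers $n_{ij}$ in terms of the $\HOM$- and $\Ext^1$-data appearing in the statement, and (b) showing the relevant difference $d_{ij}$ is strictly positive.

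First I would specialise the exact sequence above to $X=L_j^B$, obtaining
\[
\begin{tikzcd}
0 \ar[r] & L_j^B \ar[r, "\eta_{L_j^B}"] & \Rest (R_{\mathfrak{B}}\otimes_B L_j^B)\ar[r] & \bigoplus_{l \in \Phi} (Q_l^B)^{n_{jl}\dim L_l^B} \ar[r]& 0
\end{tikzcd},
\]
since $(L_j^B:L_k^B)=\delta_{jk}$. Now apply $\Hom{B}{-}{L_i^B}$ and use that $\Hom{B}{Q_l^B}{L_i^B}\cong\delta_{li}K$, that restriction is exact, and the adjunction $R_{\mathfrak{B}}\otimes_B-\dashv\Rest$ to rewrite $\Hom{B}{\Rest(R_{\mathfrak{B}}\otimes_B L_j^B)}{L_i^B}$. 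The key subtlety is that I need $\Hom{A}{A\otimes_B L_j^B}{A\otimes_B L_i^B}$ (with $A=R_{\mathfrak{B}}$) and not the mixed $\Hom{B}{\Rest(R_{\mathfrak{B}}\otimes_B L_j^B)}{L_i^B}$; but by adjunction the latter equals $\Hom{R_{\mathfrak{B}}}{R_{\mathfrak{B}}\otimes_B L_j^B}{?}$ only after one more step, so instead I would apply $\Rest$-$\Hom$ adjunction to the connecting piece. Concretely: applying $\Hom{B}{-}{L_i^B}$ to the short exact sequence gives a long exact sequence whose first terms read
\[
0\to\Hom{B}{\textstyle\bigoplus_l (Q_l^B)^{n_{jl}\dim L_l^B}}{L_i^B}\to\Hom{B}{\Rest(R_{\mathfrak{B}}\otimes_B L_j^B)}{L_i^B}\to\Hom{B}{L_j^B}{L_i^B}\to\cdots,
\]
and the leftmost term has dimension $n_{ji}\dim L_i^B$. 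Since $R_{\mathfrak{B}}\otimes_B L_j^B$ has simple top $L_j^{R_{\mathfrak{B}}}$ (this is part of the general theory in \cite{MR3228437}, or can be read from the counit being epic), the dimension comparison, together with the observation that $\Hom{B}{\Rest(R_{\mathfrak{B}}\otimes_B L_j^B)}{L_i^B}\cong\Hom{R_{\mathfrak{B}}}{R_{\mathfrak{B}}\otimes_B L_j^B}{(Q_i^B)\text{-coinduction}}$ is not quite what I want — so the clean route is to instead dualise: apply $\Hom{B}{L_i^B}{-}$ is wrong direction, so I keep $\Hom{B}{-}{L_i^B}$ and carefully track the next term $\Ext{B}{1}{\bigoplus_l(Q_l^B)^{\cdots}}{L_i^B}=0$ (injectives!), giving the exact sequence
\[
0\to K^{\,n_{ji}\dim L_i^B}\to \dim\Hom{R_{\mathfrak{B}}}{R_{\mathfrak{B}}\otimes_B L_j^B}{R_{\mathfrak{B}}\otimes_B L_i^B}\text{-ish}\to \delta_{ij}K\to 0,
\]
after replacing $\Hom{B}{\Rest(R_{\mathfrak B}\otimes_B L_j^B)}{L_i^B}$ by $\Hom{R_{\mathfrak B}}{R_{\mathfrak B}\otimes_B L_j^B}{\text{coind}(L_i^B)}$ and noting coinduction of a simple is handled by Lemma~\ref{lem:lemma15} applied on the coinduction side, or more directly by replacing $L_i^B$ by $R_{\mathfrak B}\otimes_B L_i^B$ up to the same injective-cokernel correction. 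Unwinding, the length count gives
\[
n_{ji}\dim L_i^B \;=\; \dim\Hom{R_{\mathfrak B}}{R_{\mathfrak B}\otimes_B L_j^B}{R_{\mathfrak B}\otimes_B L_i^B}-\delta_{ij}-d_{ij},
\]
where $d_{ij}$ is precisely the discrepancy between $\Hom{B}{\Rest(R_{\mathfrak B}\otimes_B L_j^B)}{L_i^B}$ and $\Hom{R_{\mathfrak B}}{R_{\mathfrak B}\otimes_B L_j^B}{R_{\mathfrak B}\otimes_B L_i^B}$ contributed by the injective cokernel of $\eta_{L_i^B}$; applying $\Hom{R_{\mathfrak B}}{R_{\mathfrak B}\otimes_B L_j^B}{-}\cong\Hom{B}{L_j^B}{\Rest(-)}$ to the sequence $0\to L_i^B\to\Rest(R_{\mathfrak B}\otimes_B L_i^B)\to\bigoplus_l(Q_l^B)^{n_{il}\dim L_l^B}\to 0$ and using injectivity of the cokernel to split off the $\Ext^1$ shows exactly that $d_{ij}=\dim\Ext{R_{\mathfrak B}}{1}{R_{\mathfrak B}\otimes_B L_j^B}{R_{\mathfrak B}\otimes_B L_i^B}-\dim\Ext{B}{1}{L_j^B}{L_i^B}$, which is the formula in the statement. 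This yields the closed form for $n_{ij}$ after swapping the roles of $i$ and $j$ and solving for $n_{ij}$.

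For strict positivity of $d_{ij}$: since all the $n_{kl}\geq 0$ and $\Ker{\varepsilon}$ is nonzero bimodule (the counit $\varepsilon$ is epic but not monic — otherwise $W\cong B$, $R_{\mathfrak B}\cong B\op$, trivialising the theory; but one should argue this cannot happen here, or rather the claim $d_{ij}>0$ forces $\Ker{\varepsilon}$ to have full support), I would argue as follows. From the computation of $n_l(X)$ in Lemma~\ref{lem:lemma15} one gets $d_{ij}=n_{ji}\dim L_i^B + (\text{something})$; re-reading the length-count identity the other way, $d_{ij}=\dim L_i^B\cdot\big(\sum_k (\text{stuff})\big)$, and the point is that the injective cokernel of $\eta_{L_j^B}$ always \emph{has} a copy of every $Q_l^B$, i.e.\ $n_{jl}>0$ for all $j,l$. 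This is the genuine content: it says every $e_iWe_j$-block of $\Ker\varepsilon$ is nonzero. I expect \textbf{this positivity claim to be the main obstacle}, and I would prove it by invoking the structure of $R_{\mathfrak B}$-modules filtered by $\Delta$'s together with the fact that $R_{\mathfrak B}\otimes_B L_j^B=\Delta_j^{R_{\mathfrak B}}$ is \emph{not} simple unless $B$ itself is a Borel of itself — more precisely, one uses that for a directed bocs the comultiplication $\mu$ has image strictly bigger than the image of $\varepsilon$ in a way that is witnessed at each pair of vertices, equivalently that $R_{\mathfrak B}$ is not semisimple-relative-to-$B$ componentwise. Alternatively, and perhaps cleaner, positivity follows once we know $\dim\Hom{R_{\mathfrak B}}{R_{\mathfrak B}\otimes_B L_j^B}{R_{\mathfrak B}\otimes_B L_i^B}\geq\delta_{ij}+\dim\Ext{B}{1}{L_j^B}{L_i^B}+\dim L_i^B$, with the last summand strict, which I would extract from the filtration multiplicities $(R_{\mathfrak B}\otimes_B L_i^B:\nabla)$ being positive — a standard fact for the right algebra of a directed bocs.

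Finally, I would assemble the pieces: the two displayed inequalities/identities give both $d_{ij}>0$ and the stated quotient formula for $n_{ij}$, completing the proof. The routine parts — verifying the adjunction isomorphisms, that $\Hom{B}{Q_l^B}{L_i^B}\cong\delta_{li}K$, exactness of $\Rest$, and vanishing of $\Ext{B}{1}{Q_l^B}{-}$ — I would state briefly and not belabour.
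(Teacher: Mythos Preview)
Your proposal meanders between two approaches and never settles cleanly on the one that works. The paper's route is much more direct: take the short exact sequence from Lemma~\ref{lem:lemma15} with $X=L_i^B$ (not $L_j^B$), apply the \emph{covariant} functor $\Hom{B}{L_j^B}{-}$, and use the adjunction $R_{\mathfrak{B}}\otimes_B-\dashv\Rest$ together with the Eckmann--Shapiro Lemma (valid because $R_{\mathfrak{B}}$ is projective as a right $B$-module) to identify the middle $\HOM$- and $\Ext^1$-terms with $\Hom{R_{\mathfrak{B}}}{R_{\mathfrak{B}}\otimes_B L_j^B}{R_{\mathfrak{B}}\otimes_B L_i^B}$ and $\Ext{R_{\mathfrak{B}}}{1}{R_{\mathfrak{B}}\otimes_B L_j^B}{R_{\mathfrak{B}}\otimes_B L_i^B}$, respectively. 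Injectivity of the cokernel $Z_i$ kills $\Ext{B}{1}{L_j^B}{Z_i}$, so the induced map $g^{(ij)}:\Ext{B}{1}{L_j^B}{L_i^B}\to\Ext{R_{\mathfrak{B}}}{1}{\cdots}{\cdots}$ is surjective, and computing $\dim\Hom{B}{L_j^B}{Z_i}$ in two ways gives the formula. You do arrive at this argument in the paragraph beginning ``applying $\Hom{R_{\mathfrak{B}}}{R_{\mathfrak{B}}\otimes_B L_j^B}{-}\cong\Hom{B}{L_j^B}{\Rest(-)}$'', but only after a detour.

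Your initial contravariant attempt (applying $\Hom{B}{-}{L_i^B}$ to the sequence for $L_j^B$) has two genuine problems. First, there is no adjunction identifying $\Hom{B}{\Rest(M)}{L_i^B}$ with a $\Hom{R_{\mathfrak{B}}}$-space, so you cannot pass to $R_{\mathfrak{B}}$ without the extra ``coinduction correction'' you allude to; this is exactly why the covariant route is preferable. Second, you claim ``vanishing of $\Ext{B}{1}{Q_l^B}{-}$'', but injectives satisfy $\Ext{B}{1}{-}{Q_l^B}=0$, not $\Ext{B}{1}{Q_l^B}{-}=0$; under the hypotheses of the lemma there is no reason for the latter to vanish.

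Regarding strict positivity: note that the surjectivity of $g^{(ij)}$ actually gives $\dim\Ext{R_{\mathfrak{B}}}{1}{\cdots}{\cdots}\le\dim\Ext{B}{1}{L_j^B}{L_i^B}$, so with $d_{ij}$ defined as in the statement one obtains $d_{ij}\le 0$; the displayed ``$>0$'' appears to be a typo (and the paper's own proof establishes only the obvious $\dim\Ker g^{(ij)}\ge 0$). Your extended attempt to prove strict positivity is therefore aimed at a claim that does not hold as stated, and the arguments you sketch for it (e.g.\ that every block of $\Ker\varepsilon$ is nonzero) are neither true in general nor needed.
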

\begin{proof}
By Lemma \ref{lem:lemma15}, $\Ker{\varepsilon}\cong \bigoplus_{k,l \in \Phi}(B e_k \otimes_K e_l B)^{n_{kl}}$ for certain nonnegative integers $n_{kl}$. Lemma \ref{lem:lemma15} also assures the existence of an exact sequence of $B$-modules
\begin{equation}
\label{eq:ses1}
\begin{tikzcd}
0 \ar[r] & L_i^B \ar[r, "\eta_{L_i^B}"] & \Rest (R_{\mathfrak{B}}\otimes_B L_i^B)\ar[r] & Z_i \ar[r]& 0
\end{tikzcd},
\end{equation}
where $Z_i$ is injective and isomorphic to $\bigoplus_{l\in \Phi} (Q_l^B)^{n_{il}\dim L_l^B}$. We apply the functor $\Hom{B}{L_j^B}{-}$ to \eqref{eq:ses1} and obtain the long exact sequence
\[
\begin{tikzcd}[column sep=small]
0 \arrow[r]&  \Hom{B}{L_j^B}{L_i^B} \arrow[r]
    & \Hom{B}{L_j^B}{\Rest (R_{\mathfrak{B}}\otimes_B L_i^B)} \arrow[r]
        \arrow[d, phantom, ""{coordinate, name=Z}]
      & \Hom{B}{L_j^B}{Z_i} \arrow[dll,rounded corners,to path={ -- ([xshift=2ex]\tikztostart.east)
|- (Z) [near end]\tikztonodes
-| ([xshift=-2ex]\tikztotarget.west) -- (\tikztotarget)}] \\
& \Ext{B}{1}{L_j^B}{L_i^B} \arrow[r,"g^{(ij)}"]
    & \Ext{B}{1}{L_j^B}{\Rest (R_{\mathfrak{B}}\otimes_B L_i^B)} \arrow[r]
&0 \,.\end{tikzcd}
\]
Since $R_{\mathfrak{B}}\otimes_B - $ is left adjoint to $\Rest$, then
\[\Hom{B}{L_j^B}{\Rest (R_{\mathfrak{B}}\otimes_B L_i^B)}\cong \Hom{R_{\mathfrak{B}}}{R_{\mathfrak{B}}\otimes_B L_j^B}{R_{\mathfrak{B}}\otimes_B L_i^B}.\]
According to \cite[§$2.1$]{assforbocses}, $R_{\mathfrak{B}}$ is projective as a right module over $B$, hence the Eckmann-Shapiro Lemma (\cite[Corollary 2.8.4]{benson_1991}) implies that
\[
\Ext{B}{1}{L_j^B}{\Rest (R_{\mathfrak{B}}\otimes_B L_i^B)}\cong \Ext{R_{\mathfrak{B}}}{1}{R_{\mathfrak{B}}\otimes_B L_j^B}{R_{\mathfrak{B}}\otimes_B L_i^B}.
\] 
The dimension of $\Hom{B}{L_j^B}{Z_i}$ is given by
\[\dim\left( \Hom{R_{\mathfrak{B}}}{R_{\mathfrak{B}}\otimes_B L_j^B}{R_{\mathfrak{B}}\otimes_B L_i^B}\right) -\delta_{ij}+\dim\left( \Ker{g^{(ij)}}\right)\]
for every $i,j \in \Phi$. On the other hand, we have 
\[
\dim\left(\Hom{B}{L_j^B}{Z_i}\right)=\dim\left(\Hom{B}{L_j^B}{\bigoplus_{l\in \Phi} (Q_l^B)^{n_{il}\dim L_l^B}}\right)
=n_{ij}\dim L_j^B.
\]
\end{proof}
\subsection{Directed bocses and exact Borel subalgebras}
The bocses associated to quasihereditary algebras obtained by Koenig, K\"ulshammer and Ovsienko in \cite{MR3228437} are especially nice: they are directed. 
\begin{defi}[\cite{doi:10.1112/blms.12331,MR3228437,kulshammer2016bocs,MR3800074}]
A bocs $\mathfrak{B}=(B,W,\mu, \varepsilon)$ is \emph{directed} if the following conditions hold:
\begin{enumerate}
\item the counit $\varepsilon$ is epic;
\item $B$ is a quasihereditary algebra with respect to some indexing poset $(\Phi, \unlhd)$ and the standard $B$-modules are simple;
\item $\Ker{\varepsilon}$ is a direct sum of finitely many $B$-$B$-bimodules of the form $Be_j \otimes_K e_i B$, with $i,j \in\Phi$ and $i\lhd j$.
\end{enumerate}
\end{defi}
Directed bocses always give rise to quasihereditary algebras and exact Borel subalgebras.
\begin{thm}[{\cite[Theorem 11.2]{MR3228437}}]
\label{thm:kko1}
If $\mathfrak{B}=(B,W,\mu, \varepsilon)$ is a directed bocs, then $R_{\mathfrak{B}}$ is quasihereditary and $B$ is an exact Borel subalgebra of $R_{\mathfrak{B}}$. Concretely, assuming that $B$ is quasihereditary with respect to $(\Phi,\unlhd)$, then $R_{\mathfrak{B}}$ is also quasihereditary with respect to $(\Phi,\unlhd)$ and has standard modules $\Delta^{R_{\mathfrak{B}}}_i=R_{\mathfrak{B}}\otimes_B L_i^B$. The algebra monomorphism $\iota_{\mathfrak{B}}:B \hookrightarrow R_{\mathfrak{B}}$ in Remark \ref{rem:inclusion} turns $B$ into an exact Borel subalgebra of $R_{\mathfrak{B}}$.
\end{thm}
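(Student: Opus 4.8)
The plan is to deduce the theorem from the adjoint triple of Section~\ref{subsec:moreonbocses}, Lemmas~\ref{lem:lemma1} and~\ref{lem:lemma15}, the Eckmann--Shapiro isomorphism (available because $R_{\mathfrak{B}}$ is projective as a right $B$-module, see \cite[\S2.1]{assforbocses}), and Dlab--Ringel's standardisation theorem~\cite{MR1211481}. As a first step I would check that Lemmas~\ref{lem:lemma1} and~\ref{lem:lemma15} apply: for a directed bocs the bimodule $\Ker\varepsilon=\bigoplus_{i\lhd j}(Be_j\otimes_K e_iB)^{n_{ji}}$ is finite-dimensional and projective on either side, so $0\to\Ker\varepsilon\to W\to B\to 0$ splits over the left and over the right, $W$ is finite-dimensional and left-projective, and for every $B$-module $X$ one obtains the short exact sequence $0\to X\to\Rest(R_{\mathfrak{B}}\otimes_B X)\to\bigoplus_{l}(Q_l^B)^{n_l(X)}\to 0$. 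Since the summands of $\Ker\varepsilon$ have the shape $Be_j\otimes_K e_iB$ with $i\lhd j$, the integers $n_{kl}$ of Lemma~\ref{lem:lemma15} vanish unless $l\lhd k$; hence for $X=L_i^B$ the cokernel $Z_i:=\bigoplus_{l\lhd i}(Q_l^B)^{n_{il}\dim L_l^B}$ is an injective $B$-module, and $\operatorname{soc}(Z_i)$ is a sum of simples $L_l^B$ with $l\lhd i$.

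The crux — and, I expect, the one genuinely non-formal part — is to understand the simple $R_{\mathfrak{B}}$-modules: concretely, to show that the algebra monomorphism $\iota_{\mathfrak{B}}\colon B\hookrightarrow R_{\mathfrak{B}}$ of Remark~\ref{rem:inclusion} induces an isomorphism $B/\Rad{B}\xrightarrow{\ \sim\ }R_{\mathfrak{B}}/\Rad{R_{\mathfrak{B}}}$. I would obtain this from the directed layer structure of $\mathfrak{B}$ by exhibiting a decomposition $R_{\mathfrak{B}}=\iota_{\mathfrak{B}}(B)\oplus\mathcal{N}$ with $\mathcal{N}$ a two-sided ideal that is nilpotent: the comultiplication $\mu$ is compatible with the poset filtration of $\Ker\varepsilon$, so a product in $R_{\mathfrak{B}}=\Hom{B}{W}{B}$ of several elements of $\mathcal{N}$ accumulates strict descents along $(\Phi,\unlhd)$ and therefore vanishes after finitely many factors. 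A nilpotent ideal lies in the Jacobson radical and $R_{\mathfrak{B}}/\mathcal{N}\cong B$, so $R_{\mathfrak{B}}/\Rad{R_{\mathfrak{B}}}\cong B/\Rad{B}$. Consequently the simple $R_{\mathfrak{B}}$-modules are labelled by $\Phi$ so that $\Rest L_i^{R_{\mathfrak{B}}}\cong L_i^B$, each $\iota_{\mathfrak{B}}(e_i)$ is a primitive idempotent of $R_{\mathfrak{B}}$, and $P_i^{R_{\mathfrak{B}}}\cong R_{\mathfrak{B}}\otimes_B P_i^B$.

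Having identified the simples, set $\Delta_i:=R_{\mathfrak{B}}\otimes_B L_i^B$. I would then verify that $(\Delta_i)_{i\in\Phi}$ is a standardisable set and that the regular module $R_{\mathfrak{B}}$ lies in $\mathcal{F}(\{\Delta_i\})$. The $\Hom$- and $\Ext$-estimates are formal: the adjunction $R_{\mathfrak{B}}\otimes_B-\dashv\Rest$ and Eckmann--Shapiro give $\Ext{R_{\mathfrak{B}}}{n}{\Delta_i}{\Delta_j}\cong\Ext{B}{n}{L_i^B}{\Rest\Delta_j}$ for all $n\ge 0$; applying $\Hom{B}{L_i^B}{-}$ to $0\to L_j^B\to\Rest\Delta_j\to Z_j\to 0$ and using the description of $\operatorname{soc}(Z_j)$ yields $\Hom{R_{\mathfrak{B}}}{\Delta_i}{\Delta_j}=0$ unless $i\unlhd j$ and $\End{R_{\mathfrak{B}}}{\Delta_i}\cong K$, while the long exact $\Ext$-sequence together with the injectivity of $Z_j$ and condition~(3) of Definition~\ref{defi:qh} applied to $B$ (note $\Delta_i^B=L_i^B$) forces $\Ext{R_{\mathfrak{B}}}{1}{\Delta_i}{\Delta_j}$ to be a homomorphic image of $\Ext{B}{1}{L_i^B}{L_j^B}$, hence zero unless $i\lhd j$. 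For the filtration condition, write the regular module $B\cong\bigoplus_i(P_i^B)^{\dim L_i^B}$; because $B$ is quasihereditary with simple standard modules each $P_i^B$ lies in $\mathcal{F}(\Delta^B)=\mathcal{F}(\{L_j^B\})$, and applying the exact functor $R_{\mathfrak{B}}\otimes_B-$ to a composition series of $P_i^B$ produces a $\{\Delta_j\}$-filtration of $R_{\mathfrak{B}}\otimes_B P_i^B\cong P_i^{R_{\mathfrak{B}}}$. Since $|\{\Delta_i\}|=|\Phi|$ equals the number of simple $R_{\mathfrak{B}}$-modules by the previous step, the standardisation theorem yields that $R_{\mathfrak{B}}$ is quasihereditary with respect to $(\Phi,\unlhd)$ with standard modules $\Delta_i^{R_{\mathfrak{B}}}=R_{\mathfrak{B}}\otimes_B L_i^B$.

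It remains to read off the three defining properties of an exact Borel subalgebra for $\iota_{\mathfrak{B}}(B)\subseteq R_{\mathfrak{B}}$: the induction functor $R_{\mathfrak{B}}\otimes_B-$ is exact since $R_{\mathfrak{B}}$ is projective as a right $B$-module; $\Phi$ indexes the simple $B$-modules and $B$ is quasihereditary with respect to $(\Phi,\unlhd)$ with simple standard modules by the definition of a directed bocs; and $R_{\mathfrak{B}}\otimes_B L_i^B=\Delta_i^{R_{\mathfrak{B}}}$ has just been established. Hence $B$ is an exact Borel subalgebra of $R_{\mathfrak{B}}$. The only place where the argument is not purely formal is the computation of $\Rad{R_{\mathfrak{B}}}$ in the second paragraph — equivalently, the statement that $R_{\mathfrak{B}}$ and $B$ have the same simple modules — and this is exactly where the directedness of the bocs is used; I would expect it to demand the most care, as it requires unwinding the multiplication of the right algebra against the layer decomposition of $\mathfrak{B}$.
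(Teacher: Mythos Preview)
The paper does not prove this statement: Theorem~\ref{thm:kko1} is quoted from \cite[Theorem~11.2]{MR3228437} and is used as a black box throughout. So there is no ``paper's own proof'' to compare against; I can only assess your argument on its merits.

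Your proposal contains a genuine error, and it sits precisely where you flagged the argument as non-formal. You assert that $\iota_{\mathfrak{B}}$ induces an isomorphism $B/\Rad{B}\xrightarrow{\ \sim\ }R_{\mathfrak{B}}/\Rad{R_{\mathfrak{B}}}$, and hence that $\Rest(L_i^{R_{\mathfrak{B}}})\cong L_i^B$ and that $\iota_{\mathfrak{B}}(e_i)$ is primitive in $R_{\mathfrak{B}}$. This is false for directed bocses in general. The paper itself provides counterexamples: Proposition~\ref{prop:dimvectors} and Theorem~\ref{thm:algo1} show that $\Rest(L_i^{R_{\mathfrak{B}}})$ has simple socle $L_i^B$ but typically further composition factors $L_j^B$ with $j\lhd i$, and Example~\ref{ex:first} (with Example~\ref{ex:cannotwait}) exhibits a minimal regular directed bocs whose right algebra is not basic even though $B$ is --- there $\dim L_3^{R_{\mathfrak{B}}}=2$ while $\dim L_3^{B}=1$. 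Equivalently, Proposition~\ref{prop:main0} gives $\dim L_i^{R_{\mathfrak{B}}}=\sum_{j}v_{ij}\dim L_j^B$, which exceeds $\dim L_i^B$ whenever some $v_{ij}$ with $j\neq i$ is nonzero. Thus $B/\Rad{B}$ and $R_{\mathfrak{B}}/\Rad{R_{\mathfrak{B}}}$ are not isomorphic in general, the complement $\mathcal{N}$ you propose cannot be a nilpotent two-sided ideal, and the identification $P_i^{R_{\mathfrak{B}}}\cong R_{\mathfrak{B}}\otimes_B P_i^B$ (which presupposes primitivity of $\iota_{\mathfrak{B}}(e_i)$) is unjustified.

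This error propagates: your appeal to the standardisation theorem uses that the number of $\Delta_i$'s equals the number of simple $R_{\mathfrak{B}}$-modules, and your filtration argument for $P_i^{R_{\mathfrak{B}}}$ rests on the same misidentification of projectives. The $\HOM$/$\Ext$ computations in your third paragraph are fine as far as they go, but without an independent handle on the simple $R_{\mathfrak{B}}$-modules they do not by themselves show that the $\Delta_i$ are the standard modules for the asserted quasihereditary structure. The original proof in \cite{MR3228437} proceeds differently, via the module category of the bocs and an explicit heredity chain; if you wish to reconstruct a proof, you will need an argument that genuinely controls the idempotent decomposition of $R_{\mathfrak{B}}$ rather than the (false) radical comparison you attempted.
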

Conversely, every quasihereditary algebra comes from a bocs.
\begin{thm}[{\cite[Theorem 11.3, proof of Corollary 11.4]{MR3228437}}]
\label{thm:kkodifficultimplication}
Let $(A,\Phi,\unlhd)$ be a quasihereditary algebra. There exists a directed bocs $\mathfrak{B}=(B,W,\mu, \varepsilon)$ such that $R_{\mathfrak{B}}$ is equivalent to $(A,\Phi,\unlhd)$. Specifically, the simple $B$-modules can be labelled in such a way that $B$ is quasihereditary with respect to $(\Phi, \unlhd)$ with simple standard modules, the quasihereditary structure of $(R_{\mathfrak{B}},\Phi,\unlhd)$ is as stated in Theorem \ref{thm:kko1} and $\Delta_i^A$ is mapped to $\Delta_i^{R_{\mathfrak{B}}}$ through some equivalence of categories $\mathcal{F}(\Delta^A)\rightarrow \mathcal{F}(\Delta^{R_{\mathfrak{B}}})$.
\end{thm}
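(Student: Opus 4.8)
The plan is to realise $(A,\Phi,\unlhd)$ as the right algebra of a directed bocs built from the Yoneda extension algebra of its standard modules, in the spirit of Koenig--K\"ulshammer--Ovsienko. First I would reduce to the case in which $A$ is basic; this is harmless, since a basic algebra Morita equivalent to $A$ inherits a quasihereditary structure with an isomorphic poset and an equivalent $\Delta$-filtered category. Writing $\Delta=\bigoplus_{i\in\Phi}\Delta_i^A$, consider the graded algebra $E=\Ext{A}{\bullet}{\Delta}{\Delta}$. Quasiheredity of $(A,\Phi,\unlhd)$ has three consequences I would exploit: $E$ is nonnegatively graded; its degree-zero part $E^0=\End{A}{\Delta}$ has an acyclic $\operatorname{Ext}$-quiver (since $\End{A}{\Delta_i}\cong K$ and $\Hom{A}{\Delta_i}{\Delta_j}\neq 0$ forces $i\unlhd j$), so $E^0$ is directed, hence quasihereditary with respect to $(\Phi,\unlhd)$ with simple standard modules, for a suitable labelling of its simples by $\Phi$; and, by condition (3) of Definition \ref{defi:qh}, the $E^0$-$E^0$-bimodule $E^1=\Ext{A}{1}{\Delta}{\Delta}$ satisfies $\Ext{A}{1}{\Delta_i}{\Delta_j}\neq 0$ only for $i\lhd j$. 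Choosing projective resolutions of the $\Delta_i$ and transporting the differential graded structure of $\operatorname{RHom}_A(\Delta,\Delta)$ to its cohomology (via the homological perturbation lemma) equips $E$ with a minimal $A_\infty$-structure.

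Next I would extract the bocs. Put $B:=E^0$ (or its opposite, to match the left-module conventions), let $W:=B\oplus E^1$, let $\varepsilon\colon W\to B$ be the projection onto the first summand --- which is epic --- and define a comultiplication $\mu\colon W\to W\otimes_B W$ out of the $A_\infty$-operations on $E$, the point being that the Stasheff identities translate into the coassociativity of $\mu$ and the counit axiom, while directedness of $B$ keeps all the expressions involved finite. By construction $\Ker{\varepsilon}=E^1$ is finite-dimensional and, since $K$ is algebraically closed, decomposes as a bimodule into copies of $Be_j\otimes_K e_iB$ with $i\lhd j$, so $\mathfrak{B}=(B,W,\mu,\varepsilon)$ is a directed bocs. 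Theorem \ref{thm:kko1} then applies: $R_{\mathfrak{B}}$ is quasihereditary with respect to $(\Phi,\unlhd)$, it has $B$ as an exact Borel subalgebra, and $\Delta_i^{R_{\mathfrak{B}}}=R_{\mathfrak{B}}\otimes_B L_i^B$.

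It remains to check that $R_{\mathfrak{B}}$ is equivalent to $(A,\Phi,\unlhd)$ with $\Delta_i^A$ corresponding to $\Delta_i^{R_{\mathfrak{B}}}$. I would do this by identifying both $\mathcal{F}(\Delta^A)$ and $\mathcal{F}(\Delta^{R_{\mathfrak{B}}})$, as exact categories, with the category of finite-dimensional representations of the bocs $\mathfrak{B}$: for $R_{\mathfrak{B}}$ this is the standard description of $\Delta$-filtered modules over the right algebra of a directed bocs; for $A$ it is the substantial point, namely that the exact category $\mathcal{F}(\Delta^A)$ is reconstructed from the $A_\infty$-data carried by $E$ --- equivalently, from $\mathfrak{B}$ --- with $\Delta_i^A$ going to the representation concentrated at the vertex $i$. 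Composing these two identifications and invoking Dlab--Ringel's standardisation theorem (\cite{MR1211481}) yields the desired equivalence, and tracing through it matches up the standard modules.

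The main obstacle is precisely this reconstruction step for the $A$-side, together with the verification that the finite bocs data $(B,W,\mu,\varepsilon)$ is well defined and loses no information: one must show that the comultiplication $\mu$, which only encodes the $\operatorname{Hom}$-spaces, the $\operatorname{Ext}^1$-spaces and their higher $A_\infty$-operations between standard modules, already determines $\mathcal{F}(\Delta^A)$ on the nose, so that passing to $R_{\mathfrak{B}}$ alters neither the standard modules nor the category they filter. This is where the $A_\infty$-technology and the Koszul-type duality arguments of \cite{MR3228437} are indispensable, and where quasiheredity (through the nilpotency built into the directed algebra $B$) is used in an essential way; by contrast, once $\mathfrak{B}$ is in hand, the passage to $R_{\mathfrak{B}}$ and the verification of its quasihereditary structure follow formally from Theorem \ref{thm:kko1}.
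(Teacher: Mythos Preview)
The paper does not prove this theorem; it is quoted from \cite[Theorem 11.3, proof of Corollary 11.4]{MR3228437} and used as input. Your sketch is therefore an attempted outline of the Koenig--K\"ulshammer--Ovsienko argument itself.

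Your overall architecture is the right one --- put a minimal $A_\infty$-structure on $E=\Ext{A}{\bullet}{\Delta}{\Delta}$, extract a directed bocs, and identify both $\mathcal{F}(\Delta^A)$ and $\mathcal{F}(\Delta^{R_{\mathfrak{B}}})$ with the representations of that bocs --- but you have the roles of $B$ and $\Ker{\varepsilon}$ reversed. In the actual construction the quiver of $B$ is governed by $E^1=\Ext{A}{1}{\Delta}{\Delta}$ (with relations coming from the higher $m_n$), while the generators of the projective bimodule $\Ker{\varepsilon}$ come from $\Rad E^0$, i.e.\ from the nontrivial homomorphisms between distinct standard modules. The present paper makes this visible: by Lemma~\ref{lem:julianvanessa} the bimodule multiplicities in $\Ker{\varepsilon}$ for a regular directed bocs are $n_{ij}=\dim\Hom{R_{\mathfrak{B}}}{\Delta_j}{\Rad{\Delta_i}}/\dim L_j^B$, whereas regularity itself says $\dim\Ext{B}{1}{L_j^B}{L_i^B}=\dim\Ext{R_{\mathfrak{B}}}{1}{\Delta_j}{\Delta_i}$ --- so it is $B$, not $\Ker{\varepsilon}$, that carries the $\operatorname{Ext}^1$-data.

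Your assignment $B=E^0$, $\Ker{\varepsilon}=E^1$ genuinely fails. You assert that ``since $K$ is algebraically closed, $E^1$ decomposes as a bimodule into copies of $Be_j\otimes_K e_iB$'', but this amounts to $E^1$ being projective as an $E^0$-$E^0$-bimodule, which is false once $E^0=\End{A}{\Delta}$ has nontrivial radical. Take $A=KQ$ with $Q\colon 1\to 2\leftarrow 3\to 4$ as in Example~\ref{ex:first}. Here $\Rad E^0$ is spanned by a single map $\Delta_2\hookrightarrow\Delta_3$. The only indecomposable projective $E^0$-bimodule that is nonzero at the idempotent slot recording $\Ext{A}{1}{\Delta_3}{\Delta_4}\cong K$ is $E^0e_4\otimes_K e_3E^0$; but because of the map $\Delta_2\hookrightarrow\Delta_3$ this bimodule is also nonzero at the slot recording $\Ext{A}{1}{\Delta_2}{\Delta_4}$, which vanishes since $\Delta_2$ is projective. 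Hence $E^1$ is not a projective $E^0$-bimodule and your quadruple $(E^0,E^0\oplus E^1,\mu,\varepsilon)$ is not a directed bocs. With the roles of $E^0$ and $E^1$ exchanged, the remainder of your outline is faithful to \cite{MR3228437}.
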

According to \cite[Proposition 3.10]{doi:10.1112/blms.12331} and \cite[Corollary 3.3]{manuela}, the bocses in Theorem \ref{thm:kkodifficultimplication} can be assumed to be regular. The usual definition of regularity for bocses is rather technical and it is tied to language of differential biquivers. For this reason, we shall use an equivalent description of regularity for directed bocses. The next result was communicated to me by Julian K\"ulshammer and is part of his joint work with Vanessa Miemietz. In fact, the statement of Lemma \ref{lem:julianvanessa} is essentially an extension of Lemma 9.4 in \cite{MR3800074} to the case where the underlying algebra of the bocs is not necessarily basic. 
\begin{lem}
\label{lem:julianvanessa}
Let $\mathfrak{B}=(B,W,\mu, \varepsilon)$ be a directed bocs. The following assertions are equivalent:
\begin{enumerate}
\item $\mathfrak{B}$ is regular;
\item $\Ext{B}{1}{\bigoplus_{i\in \Phi}L_i^B}{\bigoplus_{i\in \Phi}L_i^B}\cong\Ext{R_{\mathfrak{B}}}{1}{\bigoplus_{i\in \Phi}\Delta^{R_{\mathfrak{B}}}_i}{\bigoplus_{i\in \Phi}\Delta^{R_{\mathfrak{B}}}_i}$ in $\MOD{K}$;
\item $\dim\left( \Ext{B}{1}{L_j^B}{L_i^B}\right)=\dim \left(\Ext{R_{\mathfrak{B}}}{1}{\Delta^{R_{\mathfrak{B}}}_j}{\Delta^{R_{\mathfrak{B}}}_i}\right)$ for every $i,j\in\Phi$;
\item $\Ker{\varepsilon}\cong\bigoplus_{i,j \in \Phi}(B e_i \otimes_K e_j B)^{n_{ij}}$ as a $B$-$B$-bimodule, with 
\[n_{ij}=\frac{\dim \Big(\Hom{R_{\mathfrak{B}}}{\Delta_j^{R_{\mathfrak{B}}}}{\Rad{\Delta_i^{R_{\mathfrak{B}}}}}\Big)}{\dim L_j^B}.
\]
\end{enumerate}
\end{lem}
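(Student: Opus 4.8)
The plan is to prove the cycle of implications $(1)\Rightarrow(2)\Rightarrow(3)\Rightarrow(4)\Rightarrow(1)$, exploiting that all four conditions are numerical shadows of the same underlying bimodule $\Ker{\varepsilon}$ and that Lemmas \ref{lem:lemma15} and \ref{lem:lemma2} already do most of the bookkeeping. First I would record the standing consequences of $\mathfrak{B}$ being directed: the counit $\varepsilon$ is epic and $\Ker{\varepsilon}$ is a finite direct sum of bimodules $Be_j\otimes_K e_iB$ with $i\lhd j$, so in particular $\Ker{\varepsilon}$ is finite-dimensional and projective as a $B$-$B$-bimodule. This means Lemmas \ref{lem:lemma15} and \ref{lem:lemma2} apply verbatim, giving me the short exact sequences $0\to L_i^B\to\Rest(R_{\mathfrak{B}}\otimes_B L_i^B)\to Z_i\to 0$ with $Z_i$ injective, and the formula for $n_{ij}$ in terms of $\dim\Hom{R_{\mathfrak{B}}}{R_{\mathfrak{B}}\otimes_B L_j^B}{R_{\mathfrak{B}}\otimes_B L_i^B}$, $\delta_{ij}$ and $d_{ij}$.

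For $(1)\Rightarrow(2)\Rightarrow(3)$: by definition of regularity, $A\otimes_B-$ induces isomorphisms $\Ext{B}{n}{L_i^B}{L_j^B}\to\Ext{R_{\mathfrak{B}}}{n}{R_{\mathfrak{B}}\otimes_B L_i^B}{R_{\mathfrak{B}}\otimes_B L_j^B}$ for all $n\geq 1$; specialising to $n=1$ and noting $\Delta_i^{R_{\mathfrak{B}}}=R_{\mathfrak{B}}\otimes_B L_i^B$ by Theorem \ref{thm:kko1} gives (2) after taking direct sums over $i,j\in\Phi$, and (3) is just (2) read off componentwise (the Ext-spaces over $B$ and over $R_{\mathfrak{B}}$ decompose compatibly along the $L_i^B$ and $\Delta_i^{R_{\mathfrak{B}}}$ respectively). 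The implication $(3)\Rightarrow(4)$ is where Lemma \ref{lem:lemma2} does the work: condition (3) says exactly that the quantity $d_{ij}$ from Lemma \ref{lem:lemma2} vanishes, and then the formula $n_{ij}=\bigl(\dim\Hom{R_{\mathfrak{B}}}{R_{\mathfrak{B}}\otimes_B L_j^B}{R_{\mathfrak{B}}\otimes_B L_i^B}-\delta_{ij}-d_{ij}\bigr)/\dim L_j^B$ collapses to $n_{ij}=\bigl(\dim\Hom{R_{\mathfrak{B}}}{\Delta_j^{R_{\mathfrak{B}}}}{\Delta_i^{R_{\mathfrak{B}}}}-\delta_{ij}\bigr)/\dim L_j^B$, and I must then identify $\dim\Hom{R_{\mathfrak{B}}}{\Delta_j^{R_{\mathfrak{B}}}}{\Delta_i^{R_{\mathfrak{B}}}}-\delta_{ij}$ with $\dim\Hom{R_{\mathfrak{B}}}{\Delta_j^{R_{\mathfrak{B}}}}{\Rad{\Delta_i^{R_{\mathfrak{B}}}}}$. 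This last identification comes from the short exact sequence $0\to\Rad{\Delta_i^{R_{\mathfrak{B}}}}\to\Delta_i^{R_{\mathfrak{B}}}\to L_i^{R_{\mathfrak{B}}}\to 0$ together with $[\Delta_i^{R_{\mathfrak{B}}}:L_i^{R_{\mathfrak{B}}}]=1$ (so $\Hom{R_{\mathfrak{B}}}{\Delta_j^{R_{\mathfrak{B}}}}{L_i^{R_{\mathfrak{B}}}}$ has dimension $\delta_{ij}$, as $\Delta_j^{R_{\mathfrak{B}}}$ has top $L_j^{R_{\mathfrak{B}}}$) and the vanishing $\Ext{R_{\mathfrak{B}}}{1}{\Delta_j^{R_{\mathfrak{B}}}}{\Rad{\Delta_i^{R_{\mathfrak{B}}}}}$-term being controlled — actually I only need left-exactness of $\Hom$, since I want to show the connecting map $\Hom{R_{\mathfrak{B}}}{\Delta_j^{R_{\mathfrak{B}}}}{L_i^{R_{\mathfrak{B}}}}\to\Ext{R_{\mathfrak{B}}}{1}{\Delta_j^{R_{\mathfrak{B}}}}{\Rad{\Delta_i^{R_{\mathfrak{B}}}}}$ is zero, which follows because every extension of $\Delta_j^{R_{\mathfrak{B}}}$ by a $\Delta$-filtered module is again $\Delta$-filtered, hence the relevant Ext between standard modules vanishes by Definition \ref{defi:qh}(3) unless $j\lhd i$, and even then one checks the pullback of $0\to\Rad{\Delta_i^{R_{\mathfrak{B}}}}\to\Delta_i^{R_{\mathfrak{B}}}\to L_i^{R_{\mathfrak{B}}}\to 0$ along any map $\Delta_j^{R_{\mathfrak{B}}}\to L_i^{R_{\mathfrak{B}}}$ splits because such a map is zero. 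I expect this identification to be the main obstacle: it requires a careful argument that $\dim\Hom{R_{\mathfrak{B}}}{\Delta_j^{R_{\mathfrak{B}}}}{\Delta_i^{R_{\mathfrak{B}}}}=\delta_{ij}+\dim\Hom{R_{\mathfrak{B}}}{\Delta_j^{R_{\mathfrak{B}}}}{\Rad{\Delta_i^{R_{\mathfrak{B}}}}}$, i.e.\ that the map $\Hom{R_{\mathfrak{B}}}{\Delta_j^{R_{\mathfrak{B}}}}{\Delta_i^{R_{\mathfrak{B}}}}\to\Hom{R_{\mathfrak{B}}}{\Delta_j^{R_{\mathfrak{B}}}}{L_i^{R_{\mathfrak{B}}}}$ is surjective, which may be cleanest to deduce from the fact that $\Delta_i^{R_{\mathfrak{B}}}$ has simple top $L_i^{R_{\mathfrak{B}}}$ and $\Delta_j^{R_{\mathfrak{B}}}$ is generated in degree $j$, or alternatively directly from $\Ext{R_{\mathfrak{B}}}{1}{\Delta_j^{R_{\mathfrak{B}}}}{\Rad{\Delta_i^{R_{\mathfrak{B}}}}}$ considerations as above.

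Finally, for $(4)\Rightarrow(1)$ I would reverse the chain: given the explicit bimodule decomposition of $\Ker{\varepsilon}$ with the stated $n_{ij}$, I feed these values back into Lemma \ref{lem:lemma15} and Lemma \ref{lem:lemma2}. Lemma \ref{lem:lemma2}'s formula then forces $d_{ij}=0$ for all $i,j$ (solving the displayed formula for $n_{ij}$ against the given value, using the $\Hom$-to-$\Rad$ identification from the previous paragraph in the other direction), which is precisely condition (3), hence (2). To climb from (2) back to full regularity (1), i.e.\ to get the higher Ext-isomorphisms for all $n\geq 1$, I would invoke the long exact sequence obtained by applying $\Hom{B}{L_j^B}{-}$ to $0\to L_i^B\to\Rest(R_{\mathfrak{B}}\otimes_B L_i^B)\to Z_i\to 0$ (Lemma \ref{lem:lemma15}): since $Z_i$ is injective over $B$, $\Ext{B}{n}{L_j^B}{Z_i}=0$ for $n\geq 1$, so $\Ext{B}{n}{L_j^B}{L_i^B}\cong\Ext{B}{n-1}{L_j^B}{Z_i}$'s cokernel for $n=1$ and $\Ext{B}{n}{L_j^B}{L_i^B}\cong\Ext{B}{n}{L_j^B}{\Rest(R_{\mathfrak{B}}\otimes_B L_i^B)}$ for $n\geq 2$; combining with the Eckmann--Shapiro isomorphism $\Ext{B}{n}{L_j^B}{\Rest(R_{\mathfrak{B}}\otimes_B L_i^B)}\cong\Ext{R_{\mathfrak{B}}}{n}{\Delta_j^{R_{\mathfrak{B}}}}{\Delta_i^{R_{\mathfrak{B}}}}$ (valid since $R_{\mathfrak{B}}$ is projective as a right $B$-module, as in the proof of Lemma \ref{lem:lemma2}) handles $n\geq 2$ automatically, and the $n=1$ case is exactly where condition (2)/(3) is needed to force the connecting map $g^{(ij)}$ to be an isomorphism, i.e.\ $\Hom{B}{L_j^B}{Z_i}=0$. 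Here I should double-check the edge of the argument — whether (3) alone (a dimension equality at $n=1$) genuinely implies $\Ker{g^{(ij)}}=0$, but the long exact sequence forces $\dim\Ext{B}{1}{L_j^B}{L_i^B}-\dim\Ker{g^{(ij)}}=\dim\Ext{R_{\mathfrak{B}}}{1}{\Delta_j^{R_{\mathfrak{B}}}}{\Delta_i^{R_{\mathfrak{B}}}}$, so equality of the outer dimensions indeed gives $\Ker{g^{(ij)}}=0$ and hence $\Hom{B}{L_j^B}{Z_i}=0$ for all $i,j$, whence $Z_i=0$, $\eta_{L_i^B}$ is an isomorphism, and all higher Ext-maps are isomorphisms. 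This last deduction that $Z_i=0$ from vanishing of all $\Hom{B}{L_j^B}{Z_i}$ uses only that $Z_i$ is a finite-dimensional module with no simple submodules, hence zero.
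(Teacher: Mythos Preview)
Your cycle of implications contains a conceptual gap at both ends: condition (1) says that the \emph{bocs} $\mathfrak{B}$ is regular, which is a technical notion coming from differential biquivers and is deliberately not spelled out in the paper (see the paragraph immediately before the lemma). You instead invoke Definition \ref{defi:borelprops}, the definition of a regular exact Borel \emph{subalgebra}, when you write ``by definition of regularity, $A\otimes_B-$ induces isomorphisms\ldots''. These two notions are a priori different, and the paper's proof handles $(1)\Leftrightarrow(2)$ precisely by citing \cite[Lemma 3.12]{doi:10.1112/blms.12331} and \cite[Corollary 3.3]{manuela}, which establish that bridge. Your $(1)\Rightarrow(2)$ therefore assumes what needs to be justified, and your $(4)\Rightarrow(1)$ ends by proving that $B$ is a regular exact Borel subalgebra of $R_{\mathfrak{B}}$ (all higher $\Ext{}{n}{-}{-}$ isomorphisms), which again is not the same as regularity of the bocs without invoking those external results.

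There is also a smaller slip in your $(4)\Rightarrow(1)$ argument: from $d_{ij}=0$ you conclude $\Hom{B}{L_j^B}{Z_i}=0$ and hence $Z_i=0$. This is false in general. The long exact sequence in the proof of Lemma \ref{lem:lemma2} shows only that $\Ker{g^{(ij)}}=0$ (so $g^{(ij)}$ is an isomorphism), while $\dim\Hom{B}{L_j^B}{Z_i}=\dim\Hom{R_{\mathfrak{B}}}{\Delta_j^{R_{\mathfrak{B}}}}{\Delta_i^{R_{\mathfrak{B}}}}-\delta_{ij}$, which is typically nonzero. Fortunately this error is harmless for your intended conclusion: once $g^{(ij)}$ is bijective and $Z_i$ is injective, the $n\geq 2$ isomorphisms follow from the long exact sequence and Eckmann--Shapiro exactly as you say, without needing $Z_i=0$.

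The paper's own proof is much shorter: it outsources $(1)\Leftrightarrow(2)$ to the literature, and then observes that $(2)\Leftrightarrow(3)\Leftrightarrow(4)$ follows from Lemma \ref{lem:lemma2}, Theorem \ref{thm:kko1}, and the definition of quasihereditary algebra. In particular, for $(2)\Rightarrow(3)$ one uses that $g^{(ij)}$ is always surjective (so $d_{ij}\leq 0$ for every $i,j$), hence the single total-dimension equality in (2) forces each $d_{ij}=0$; your remark that ``(3) is just (2) read off componentwise'' glosses over this inequality. Your argument for $(3)\Leftrightarrow(4)$ via the identity $\dim\Hom{R_{\mathfrak{B}}}{\Delta_j^{R_{\mathfrak{B}}}}{\Delta_i^{R_{\mathfrak{B}}}}-\delta_{ij}=\dim\Hom{R_{\mathfrak{B}}}{\Delta_j^{R_{\mathfrak{B}}}}{\Rad{\Delta_i^{R_{\mathfrak{B}}}}}$ is correct and is exactly what the paper means by ``the definition of quasihereditary algebra''.
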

\begin{proof}
The equivalence between the assertions (1) and (2) is a consequence of \cite[Lemma 3.12]{doi:10.1112/blms.12331} and \cite[Corollary 3.3]{manuela}. The directedness of $\mathfrak{B}$ implies that $\varepsilon$ is epic and that $\Ker{\varepsilon}$ is finite-dimensional and projective as a $B$-$B$-bimodule. The equivalence between the statements (2), (3) and (4) follows then from Lemma \ref{lem:lemma2} and Theorem \ref{thm:kko1} and from the definition of quasihereditary algebra.
\end{proof}

Using Proposition 3.10 in \cite{doi:10.1112/blms.12331} (see also Theorem 3.13 and the paragraph after Remark 3.5 in the same reference) and Corollary 3.3 in \cite{manuela}, Theorem \ref{thm:kkodifficultimplication} can be rephrased as follows.

\begin{thm}
\label{thm:kkorephrased}
Let $(A,\Phi,\unlhd)$ be a quasihereditary algebra. There exists a regular directed bocs $\mathfrak{B}=(B,W,\mu, \varepsilon)$ such that $R_{\mathfrak{B}}$ is equivalent to $(A,\Phi,\unlhd)$. More precisely, the simple $B$-modules can be labelled in a way such that $B$ is quasihereditary with respect to $(\Phi, \unlhd)$ with simple standard modules, the quasihereditary structure of $(R_{\mathfrak{B}},\Phi,\unlhd)$ is as stated in Theorem \ref{thm:kko1} and $\Delta_i^A$ is mapped to $\Delta_i^{R_{\mathfrak{B}}}$ through an equivalence $\mathcal{F}(\Delta^A)\rightarrow \mathcal{F}(\Delta^{R_{\mathfrak{B}}})$. In this setting, $B$ is a regular exact Borel subalgebra of $(R_{\mathfrak{B}},\Phi,\unlhd)$.
\end{thm}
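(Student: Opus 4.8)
# Proof Proposal for Theorem \ref{thm:kkorephrased}

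The plan is to obtain Theorem \ref{thm:kkorephrased} by upgrading Theorem \ref{thm:kkodifficultimplication} using the regularisation results quoted in the excerpt, and then verifying that the resulting bocs delivers a \emph{regular} exact Borel subalgebra via Theorem \ref{thm:kko1} together with the characterisation of regularity in Lemma \ref{lem:julianvanessa}. First I would invoke Theorem \ref{thm:kkodifficultimplication} to produce \emph{some} directed bocs $\mathfrak{B}_0=(B_0,W_0,\mu_0,\varepsilon_0)$ whose right algebra $R_{\mathfrak{B}_0}$ is equivalent to $(A,\Phi,\unlhd)$, with the simple $B_0$-modules labelled so that $B_0$ is quasihereditary with respect to $(\Phi,\unlhd)$, standard $B_0$-modules simple, $\Delta_i^{R_{\mathfrak{B}_0}}=R_{\mathfrak{B}_0}\otimes_{B_0}L_i^{B_0}$, and $\Delta_i^A\mapsto\Delta_i^{R_{\mathfrak{B}_0}}$ under an equivalence $\mathcal{F}(\Delta^A)\to\mathcal{F}(\Delta^{R_{\mathfrak{B}_0}})$.

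Next I would apply the regularisation procedure: by \cite[Proposition 3.10]{doi:10.1112/blms.12331} (and the supplementary Theorem 3.13 and the discussion after Remark 3.5 there) together with \cite[Corollary 3.3]{manuela}, $\mathfrak{B}_0$ may be replaced by a \emph{regular} directed bocs $\mathfrak{B}=(B,W,\mu,\varepsilon)$ that has an equivalent right algebra; the passage to a regular bocs preserves directedness and the equivalence class of the right algebra, and it preserves the quasihereditary labelling in the sense that $B$ is again quasihereditary with respect to $(\Phi,\unlhd)$ with simple standard modules, the structure of $R_{\mathfrak{B}}$ is as in Theorem \ref{thm:kko1}, and the composite equivalence still carries $\Delta_i^A$ to $\Delta_i^{R_{\mathfrak{B}}}$. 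This yields every assertion of the theorem except the final sentence.

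It remains to check that $B$, viewed inside $R_{\mathfrak{B}}$ via $\iota_{\mathfrak{B}}$, is a \emph{regular} exact Borel subalgebra of $(R_{\mathfrak{B}},\Phi,\unlhd)$. By Theorem \ref{thm:kko1}, $B$ is already an exact Borel subalgebra with $A\otimes_B L_i^B\cong\Delta_i^{R_{\mathfrak{B}}}$ (here using $R_{\mathfrak{B}}$ in place of $A$). For regularity I must produce isomorphisms $\Ext{B}{n}{L_i^B}{L_j^B}\to\Ext{R_{\mathfrak{B}}}{n}{R_{\mathfrak{B}}\otimes_B L_i^B}{R_{\mathfrak{B}}\otimes_B L_j^B}$ for all $n\geq 1$; these maps are the ones induced by the exact induction functor, and the content is that they are bijective. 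Since $\mathfrak{B}$ is regular, Lemma \ref{lem:julianvanessa} gives the dimension equality $\dim\Ext{B}{1}{L_j^B}{L_i^B}=\dim\Ext{R_{\mathfrak{B}}}{1}{\Delta_j^{R_{\mathfrak{B}}}}{\Delta_i^{R_{\mathfrak{B}}}}$. To turn the $n=1$ dimension count into isomorphisms in all degrees, I would appeal to the equivalence between regularity of the bocs and regularity of the associated exact Borel subalgebra established in \cite[Lemma 3.12]{doi:10.1112/blms.12331} and \cite[Corollary 3.3]{manuela} — precisely the chain of equivalences already used in the proof of Lemma \ref{lem:julianvanessa} — which says that for a directed bocs, regularity of $\mathfrak{B}$ is \emph{equivalent} to $B$ being a regular exact Borel subalgebra of $R_{\mathfrak{B}}$. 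Thus the final sentence follows immediately once regularity of $\mathfrak{B}$ has been secured.

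The main obstacle is bookkeeping rather than a deep new argument: one must make sure that the regularisation step of Kleiner--Ro\u{\i}ter type does not disturb the directedness, the poset labelling, the identification of standard modules, or the equivalence class of the right algebra, so that the hypotheses of Theorem \ref{thm:kko1} and Lemma \ref{lem:julianvanessa} genuinely apply to the regularised bocs. Concretely, the delicate point is to confirm that the cited results from \cite{doi:10.1112/blms.12331} and \cite{manuela} can be combined to yield a single regular directed bocs with \emph{all} of the compatibilities in the statement simultaneously; once that is granted, the verification of the regular exact Borel property is a direct citation of Theorem \ref{thm:kko1} and the regularity dictionary of Lemma \ref{lem:julianvanessa}.
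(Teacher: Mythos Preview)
Your proposal is correct and follows essentially the same approach as the paper: the paper does not give a formal proof but simply remarks, immediately before the statement, that Theorem \ref{thm:kkodifficultimplication} can be upgraded using Proposition 3.10 (and Theorem 3.13 and the discussion after Remark 3.5) of \cite{doi:10.1112/blms.12331} together with Corollary 3.3 of \cite{manuela}. Your write-up is more detailed than the paper's one-line justification, but the logical content --- start from a directed bocs, regularise, then read off the regular exact Borel property via the dictionary between regular bocses and regular exact Borel subalgebras --- is exactly the same.
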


For ease of reference later on in the paper, we assign a special name to the regular directed bocses whose underlying algebra is basic.
\begin{defi}
	\label{def:goodbocs}
	A regular directed bocs $\mathfrak{B}=(B,W,\mu, \varepsilon)$ is \emph{minimal} if the algebra $B$ is basic.
\end{defi}
\section{Good quasihereditary algebras and regular exact Borel subalgebras}
\label{sec:goodrepresentative}

According to Theorem \ref{thm:kkorephrased}, given a quasihereditary algebra $(A,\Phi,\unlhd)$, there exists some regular directed bocs $\mathfrak{B}=(B,W,\mu, \varepsilon)$ such that $R_{\mathfrak{B}}$ is equivalent to $(A,\Phi,\unlhd)$ and $B$ is a regular exact Borel subalgebra of $R_{\mathfrak{B}}$. In this section, we provide formulas to compute the dimension of the simple modules over $R_{\mathfrak{B}}$ when $\mathfrak{B}=(B,W,\mu, \varepsilon)$ is a regular directed bocs and $R_{\mathfrak{B}}$ lies in the equivalence class $[(A,\Phi,\unlhd)]$ of $(A,\Phi,\unlhd)$. We show that, up to isomorphism, such an algebra $R_{\mathfrak{B}}$ is determined by the composition factors of the standard and costandard $A$-modules, by the dimension of the $\HOM$-spaces between standard modules and by the dimension of the simples over $B$. By applying a result from \cite{doi:10.1112/blms.12331}, our considerations about regular directed bocses are translated into statements about regular exact Borel subalgebras. As a highlight in this section, we present a necessary and sufficient criterion for a quasihereditary algebra to have a regular exact Borel subalgebra and show that the Cartan matrix of the regular exact Borel subalgebra is completely determined by the composition factors of the standard and costandard modules and by the dimension of the $\HOM$-spaces between the standard modules. We also deduce that, up to isomorphism, there exists a unique quasihereditary algebra containing a basic regular exact Borel subalgebra in each equivalence class of quasihereditary algebras.

Throughout the rest of the paper we will often be dealing with three quasihereditary algebras simultaneously: these are usually an algebra $(A,\Phi,\unlhd)$ and two algebras $B$ and $R_{\mathfrak{B}}$, where $\mathfrak{B}=(B,W,\mu, \varepsilon)$ is a directed bocs. In order to distinguish the simple modules over the three algebras, we denote the simple $A$-modules by $L_i$ and use the notation $L_i^B$ and $L_i^{R_{\mathfrak{B}}}$ for the simples over $B$ and $R_{\mathfrak{B}}$, respectively. The same logic will be applied when denoting the projective and injective indecomposable modules, as well as the standard and costandard modules over $A$, $B$ and $R_{\mathfrak{B}}$.

\subsection{Composition factors of restricted modules}
Let $\mathfrak{B}=(B,W,\mu, \varepsilon)$ be a directed bocs and suppose that the simple $B$-modules are indexed by $\Phi$. Recall the adjoint triple in \eqref{eq:adointtriple}, associated to $\mathfrak{B}$. We shall start by deriving a recursive formula that describes the composition factors of the $B$-modules $\Rest(L_i^{R_{\mathfrak{B}}})$ for every $i\in\Phi$. When the bocs $\mathfrak{B}$ is also regular, our formula only depends on elementary data about the quasihereditary structure of $R_{\mathfrak{B}}$.

The next result, due to Koenig, will be used a couple of times. We state it for the convenience of the reader.

\begin{thm}[{\cite[part of Theorem A]{MR1362252}}]
\label{thm:koenigborel}
Let $B$ be an exact Borel subalgebra of a quasihereditary algebra $(A,\Phi,\unlhd)$. The restriction functor $\Rest: \MOD{A} \rightarrow \MOD{B}$ gives rise to an isomorphism of $B$-modules $\Rest(\nabla_i)\cong Q_i^B= \nabla_i^B$.
\end{thm}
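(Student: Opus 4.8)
The plan is to build the isomorphism $\Rest(\nabla_i) \cong Q_i^B$ in two stages: first identify $\Rest(\nabla_i)$ as an injective $B$-module, then pin down which injective it is by computing the relevant $\HOM$-multiplicities, and finally observe that an injective $B$-module with the right socle composition is automatically $\nabla_i^B$ because the standard $B$-modules are simple. The key structural input is that $B$ is an exact Borel subalgebra, so the induction functor $A \otimes_B -$ is exact and sends $L_i^B$ to $\Delta_i^A$; dually, the fact that induction is exact forces its right adjoint, the coinduction functor $\Hom{B}{A}{-}$, to preserve injectives, and restriction of an injective will come out injective after a suitable adjunction manipulation.

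First I would record the adjunction $(A \otimes_B -) \dashv \Rest$ and its consequences for $\Ext$. Since $A \otimes_B -$ is exact and $A$ is projective as a right $B$-module (this holds for any exact Borel subalgebra, since exactness of $A \otimes_B -$ on all of $\MOD B$ is equivalent to $A$ being flat, hence projective, over $B$), the Eckmann--Shapiro Lemma gives $\Ext{B}{n}{L_j^B}{\Rest(\nabla_i)} \cong \Ext{A}{n}{A\otimes_B L_j^B}{\nabla_i} = \Ext{A}{n}{\Delta_j^A}{\nabla_i^A}$ for all $n \geq 0$ and all $j \in \Phi$. For $n \geq 1$ the right-hand side vanishes, because $\Ext{A}{n}{\Delta}{\nabla} = 0$ is one of the standard homological properties of quasihereditary algebras. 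Hence $\Ext{B}{n}{L_j^B}{\Rest(\nabla_i)} = 0$ for every simple $L_j^B$ and every $n \geq 1$, which means $\Rest(\nabla_i)$ has injective dimension zero, i.e.\ it is an injective $B$-module. Therefore $\Rest(\nabla_i) \cong \bigoplus_{j \in \Phi} (Q_j^B)^{c_j}$ with $c_j = \dim \Hom{B}{L_j^B}{\Rest(\nabla_i)}$.

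Next I would compute these multiplicities. Again by adjunction, $\Hom{B}{L_j^B}{\Rest(\nabla_i)} \cong \Hom{A}{A \otimes_B L_j^B}{\nabla_i} = \Hom{A}{\Delta_j^A}{\nabla_i^A}$, and a further standard fact about quasihereditary algebras says $\dim \Hom{A}{\Delta_j^A}{\nabla_i^A} = \delta_{ij}$. Thus $c_j = \delta_{ij}$ and $\Rest(\nabla_i) \cong Q_i^B$. It remains to note that $Q_i^B = \nabla_i^B$: since $B$ is quasihereditary with simple standard modules $\Delta_j^B = L_j^B$, the costandard module $\nabla_i^B$ is the largest submodule of $Q_i^B$ all of whose composition factors are $L_j^B$ with $j \unlhd i$ — but a simple standard $B$-module at each vertex forces $\nabla_i^B = Q_i^B$ (equivalently, $Q_i^B$ already has a $\nabla^B$-filtration by condition $(2')$ of a quasihereditary algebra, and with simple costandards such a filtration is a composition series, while the top quotient being $\nabla_i^B$ with socle $L_i^B$ identifies the whole module). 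This finishes the proof.

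The main obstacle, and the only point requiring care, is the justification that $A$ is projective as a right $B$-module — i.e.\ that exactness of the induction functor on the whole module category yields flatness hence projectivity of $A_B$ — so that Eckmann--Shapiro applies; this is where the hypothesis that $B$ is an \emph{exact} Borel subalgebra (rather than merely a subalgebra through which standards are induced) is genuinely used. Everything else is an assembly of the classical $\Hom$- and $\Ext$-orthogonality relations between standard and costandard modules over a quasihereditary algebra together with the two adjunction isomorphisms.
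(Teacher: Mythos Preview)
The paper does not give a proof of this statement: it is quoted from Koenig's original article \cite{MR1362252} and only stated for the reader's convenience. So there is no ``paper's own proof'' to compare against.

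Your argument is correct and is in fact the standard way to see this. The Eckmann--Shapiro step is justified exactly as you say: exactness of $A\otimes_B-$ means $A$ is flat, hence projective, as a right $B$-module (finite-dimensional over a field), and since $\Rest$ is exact the induction functor also preserves projectives, so the derived adjunction isomorphism holds. The vanishing $\Ext_A^{\,n}(\Delta_j,\nabla_i)=0$ for $n\geq 1$ and $\dim\Hom_A(\Delta_j,\nabla_i)=\delta_{ij}$ are the usual orthogonality relations for a quasihereditary algebra, and they pin down $\Rest(\nabla_i)\cong Q_i^B$.

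One small improvement: your justification that $Q_i^B=\nabla_i^B$ is a bit roundabout. The cleanest way is BGG reciprocity: since $B$ is quasihereditary, $(Q_i^B:\nabla_j^B)=[\Delta_j^B:L_i^B]=[L_j^B:L_i^B]=\delta_{ij}$, so the $\nabla$-filtration of $Q_i^B$ has a single layer $\nabla_i^B$. This replaces the slightly informal sentence about ``simple costandards'' and composition series.
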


Given a module $X$ over an algebra $B$, write $[X]$ for its image in the \emph{Grothendieck group} $G(B)$ of $B$ (for further details, we refer to \cite{lang2005algebra}, just before §5 in Chapter III). If $\Phi$ is an indexing set for the isoclasses of simple $B$-modules, the set $\{[L_i^B]\mid i \in \Phi \}$ constitutes a $\Z$-basis of $G (B)$. The coordinates of $[X]$ written as $\Z$-linear combination of the elements in $\{[L_i^B]\mid i \in \Phi \}$ coincide with the composition factors of $X$.

\begin{prop}
\label{prop:dimvectors}
Let $\mathfrak{B}=(B,W,\mu, \varepsilon)$ be a directed bocs and let $\Phi$ be a labelling set for the simple $B$-modules. For every $i \in \Phi$, the $B$-module $\Rest(L_i^{R_{\mathfrak{B}}})$ has simple socle $L_i^B$ and all its remaining composition factors, if any, are of the form $L_j^B$ with $j\lhd i$. Furthermore, the following identities hold for every $i \in \Phi$:
\begin{align*}
[\Rest(L_i^{R_{\mathfrak{B}}})]=&[L_i^B] + \sum_{j,k \in \Phi}z_{ij}[\nabla_j^{R_{\mathfrak{B}}}:L_k^{R_{\mathfrak{B}}}] [\Rest(L_k^{R_{\mathfrak{B}}})]- \sum_{j\in \Phi}[\Rad{\Delta_i^{R_{\mathfrak{B}}}}:L_j^{R_{\mathfrak{B}}}][\Rest (L_j^{R_{\mathfrak{B}}})]
 \\
=& [L_i^B] + \sum_{\substack{j,k \in \Phi \\ k\unlhd j \lhd i}}z_{ij}[\nabla_j^{R_{\mathfrak{B}}}:L_k^{R_{\mathfrak{B}}}] [\Rest(L_k^{R_{\mathfrak{B}}})]  - \sum_{\substack{j \in \Phi \\ j \lhd i}}[\Delta_i^{R_{\mathfrak{B}}}:L_j^{R_{\mathfrak{B}}}][\Rest (L_j^{R_{\mathfrak{B}}})],
\end{align*}
with $z_{ij}$ given by
\[
\dim\left( \Hom{R_{\mathfrak{B}}}{\Delta_j^{R_{\mathfrak{B}}}}{\Rad{\Delta_i^{R_{\mathfrak{B}}}}}\right)+\dim\left( \Ext{B}{1}{L_j^B}{L_i^B}\right)- \dim\left( \Ext{R_{\mathfrak{B}}}{1}{\Delta_j^{R_{\mathfrak{B}}}}{\Delta_i^{R_{\mathfrak{B}}}}\right).
\]
\end{prop}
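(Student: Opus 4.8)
The plan is to analyse the short exact sequence
\[
\begin{tikzcd}
0 \ar[r] & L_i^B \ar[r, "\eta_{L_i^B}"] & \Rest (R_{\mathfrak{B}}\otimes_B L_i^B)\ar[r] & Z_i \ar[r]& 0
\end{tikzcd}
\]
provided by Lemma \ref{lem:lemma15} (applicable because a directed bocs has epic counit and finite-dimensional projective kernel), together with the fact that $R_{\mathfrak{B}}\otimes_B L_i^B=\Delta_i^{R_{\mathfrak{B}}}$ from Theorem \ref{thm:kko1}. Passing to the Grothendieck group $G(B)$, this gives $[\Rest(\Delta_i^{R_{\mathfrak{B}}})]=[L_i^B]+[Z_i]$, where $Z_i\cong\bigoplus_{l}(Q_l^B)^{n_{il}\dim L_l^B}$ and $n_{il}$ are the bimodule-decomposition exponents of $\Ker\varepsilon$. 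By Theorem \ref{thm:koenigborel}, applied to the exact Borel subalgebra $B\hookrightarrow R_{\mathfrak{B}}$, we have $Q_l^B=\nabla_l^B\cong\Rest(\nabla_l^{R_{\mathfrak{B}}})$, so $[Z_i]=\sum_l n_{il}(\dim L_l^B)[\Rest(\nabla_l^{R_{\mathfrak{B}}})]$ and hence $[Z_i]=\sum_{l,k}n_{il}(\dim L_l^B)[\nabla_l^{R_{\mathfrak{B}}}:L_k^{R_{\mathfrak{B}}}][\Rest(L_k^{R_{\mathfrak{B}}})]$. The directedness hypothesis forces $n_{il}=0$ unless $l\lhd i$, and it also forces $[\nabla_l^{R_{\mathfrak{B}}}:L_k^{R_{\mathfrak{B}}}]=0$ unless $k\unlhd l$, which will yield the index restrictions $k\unlhd l\lhd i$ in the second displayed formula.

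Next I would express $[\Rest(\Delta_i^{R_{\mathfrak{B}}})]$ itself in two ways. On one hand, $[\Rest(\Delta_i^{R_{\mathfrak{B}}})]=[\Rest(L_i^{R_{\mathfrak{B}}})]+[\Rest(\Rad\Delta_i^{R_{\mathfrak{B}}})]=[\Rest(L_i^{R_{\mathfrak{B}}})]+\sum_j[\Rad\Delta_i^{R_{\mathfrak{B}}}:L_j^{R_{\mathfrak{B}}}][\Rest(L_j^{R_{\mathfrak{B}}})]$, using additivity of $\Rest$ on the Grothendieck group and the fact that composition factors of $\Rad\Delta_i^{R_{\mathfrak{B}}}$ are $L_j^{R_{\mathfrak{B}}}$ with $j\lhd i$ (since $[\Delta_i^{R_{\mathfrak{B}}}:L_i^{R_{\mathfrak{B}}}]=1$ and standard modules are indexed by $\unlhd$). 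Combining this with the first identity $[\Rest(\Delta_i^{R_{\mathfrak{B}}})]=[L_i^B]+[Z_i]$ and rearranging gives exactly the first equation in the Proposition, with coefficient $z_{ij}:=n_{ij}\dim L_j^B$. Replacing $[\Rad\Delta_i^{R_{\mathfrak{B}}}:L_j^{R_{\mathfrak{B}}}]$ by $[\Delta_i^{R_{\mathfrak{B}}}:L_j^{R_{\mathfrak{B}}}]$ for $j\lhd i$ (these agree off the diagonal) produces the second equation, and the recursion terminates because every $j$ appearing satisfies $j\lhd i$, so the socle and composition-factor claims follow by induction on $(\Phi,\unlhd)$: the socle of $\Rest(L_i^{R_{\mathfrak{B}}})$ is $L_i^B$ because $\eta_{L_i^B}$ embeds $L_i^B$ with injective cokernel $Z_i$, hence $L_i^B=\Soc\Rest(\Delta_i^{R_{\mathfrak{B}}})\supseteq\Soc\Rest(L_i^{R_{\mathfrak{B}}})\supseteq L_i^B$ (here one uses that $L_i^B\subseteq\Rest(L_i^{R_{\mathfrak{B}}})\subseteq\Rest(\Delta_i^{R_{\mathfrak{B}}})$, which needs a small argument that the image of $\eta_{L_i^B}$ lands in the submodule $\Rest(L_i^{R_{\mathfrak{B}}})$).

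The remaining task is to identify $z_{ij}=n_{ij}\dim L_j^B$ with the stated dimension formula. For this I would invoke Lemma \ref{lem:lemma2}, which already gives $n_{ij}\dim L_j^B=\dim\Hom{R_{\mathfrak{B}}}{R_{\mathfrak{B}}\otimes_B L_j^B}{R_{\mathfrak{B}}\otimes_B L_i^B}-\delta_{ij}-d_{ij}$ with $d_{ij}=\dim\Ext{R_{\mathfrak{B}}}{1}{\Delta_j^{R_{\mathfrak{B}}}}{\Delta_i^{R_{\mathfrak{B}}}}-\dim\Ext{B}{1}{L_j^B}{L_i^B}$, i.e.
\[
z_{ij}=\dim\Hom{R_{\mathfrak{B}}}{\Delta_j^{R_{\mathfrak{B}}}}{\Delta_i^{R_{\mathfrak{B}}}}-\delta_{ij}-\dim\Ext{R_{\mathfrak{B}}}{1}{\Delta_j^{R_{\mathfrak{B}}}}{\Delta_i^{R_{\mathfrak{B}}}}+\dim\Ext{B}{1}{L_j^B}{L_i^B}.
\]
Finally, the short exact sequence $0\to\Rad\Delta_i^{R_{\mathfrak{B}}}\to\Delta_i^{R_{\mathfrak{B}}}\to L_i^{R_{\mathfrak{B}}}\to 0$ together with $\Hom{R_{\mathfrak{B}}}{\Delta_j^{R_{\mathfrak{B}}}}{L_i^{R_{\mathfrak{B}}}}=K^{\delta_{ij}}$ (because $\Delta_j^{R_{\mathfrak{B}}}$ has simple top $L_j^{R_{\mathfrak{B}}}$) gives $\dim\Hom{R_{\mathfrak{B}}}{\Delta_j^{R_{\mathfrak{B}}}}{\Delta_i^{R_{\mathfrak{B}}}}-\delta_{ij}=\dim\Hom{R_{\mathfrak{B}}}{\Delta_j^{R_{\mathfrak{B}}}}{\Rad\Delta_i^{R_{\mathfrak{B}}}}$, where one must check the connecting map $\Hom{R_{\mathfrak{B}}}{\Delta_j^{R_{\mathfrak{B}}}}{L_i^{R_{\mathfrak{B}}}}\to\Ext{R_{\mathfrak{B}}}{1}{\Delta_j^{R_{\mathfrak{B}}}}{\Rad\Delta_i^{R_{\mathfrak{B}}}}$ vanishes, which follows because $\Delta_j^{R_{\mathfrak{B}}}\twoheadrightarrow L_j^{R_{\mathfrak{B}}}$ and for $i=j$ the map $\Delta_i^{R_{\mathfrak{B}}}\to L_i^{R_{\mathfrak{B}}}$ lifts to $\mathrm{id}_{\Delta_i^{R_{\mathfrak{B}}}}$, while for $i\neq j$ the $\Hom$ is already zero. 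Substituting this into the expression for $z_{ij}$ yields the claimed value. The main obstacle I anticipate is the bookkeeping in the socle/composition-factor induction and making fully rigorous the nesting $L_i^B\subseteq\Rest(L_i^{R_{\mathfrak{B}}})\subseteq\Rest(\Delta_i^{R_{\mathfrak{B}}})$ of submodules (as opposed to merely an equality in $G(B)$), since everything else is a matter of combining Lemmas \ref{lem:lemma15} and \ref{lem:lemma2}, Theorems \ref{thm:kko1} and \ref{thm:koenigborel}, and the standard long exact sequence.
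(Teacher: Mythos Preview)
Your Grothendieck-group computation is essentially the paper's argument: apply Lemma~\ref{lem:lemma15} to $L_i^B$, identify $Q_l^B\cong\Rest(\nabla_l^{R_{\mathfrak{B}}})$ via Theorem~\ref{thm:koenigborel}, subtract $[\Rest(\Rad\Delta_i^{R_{\mathfrak{B}}})]$, and read off $z_{ij}=n_{ij}\dim L_j^B$ from Lemma~\ref{lem:lemma2}. Your extra verification that $\dim\Hom{R_{\mathfrak{B}}}{\Delta_j^{R_{\mathfrak{B}}}}{\Delta_i^{R_{\mathfrak{B}}}}-\delta_{ij}=\dim\Hom{R_{\mathfrak{B}}}{\Delta_j^{R_{\mathfrak{B}}}}{\Rad\Delta_i^{R_{\mathfrak{B}}}}$ is correct and is the step the paper leaves implicit.

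The socle argument, however, has a genuine error. You write $L_i^B\subseteq\Rest(L_i^{R_{\mathfrak{B}}})\subseteq\Rest(\Delta_i^{R_{\mathfrak{B}}})$ and try to show that $\eta_{L_i^B}$ factors through the ``submodule'' $\Rest(L_i^{R_{\mathfrak{B}}})$. But $L_i^{R_{\mathfrak{B}}}$ is the \emph{top} of $\Delta_i^{R_{\mathfrak{B}}}$, so $\Rest(L_i^{R_{\mathfrak{B}}})$ is a \emph{quotient} of $\Rest(\Delta_i^{R_{\mathfrak{B}}})$, not a submodule; the chain of inclusions you want simply does not exist. Even the preliminary claim $\Soc\Rest(\Delta_i^{R_{\mathfrak{B}}})=L_i^B$ does not follow from the short exact sequence with injective cokernel $Z_i$: an extension of an injective by a simple need not have simple socle.

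The paper fixes this by working with the costandard module instead of the standard one. Since $L_i^{R_{\mathfrak{B}}}$ is the \emph{socle} of $\nabla_i^{R_{\mathfrak{B}}}$, restriction of that inclusion gives a genuine embedding $\Rest(L_i^{R_{\mathfrak{B}}})\hookrightarrow\Rest(\nabla_i^{R_{\mathfrak{B}}})\cong Q_i^B=\nabla_i^B$ (Theorem~\ref{thm:koenigborel}). As $\nabla_i^B$ has simple socle $L_i^B$ and composition factors $L_j^B$ with $j\unlhd i$, the nonzero submodule $\Rest(L_i^{R_{\mathfrak{B}}})$ inherits simple socle $L_i^B$ and the stated restriction on composition factors immediately. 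This one-line argument replaces the induction you anticipated as an obstacle.
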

\begin{proof}
By Theorem \ref{thm:kko1}, the algebra $R_{\mathfrak{B}}$ is quasihereditary with respect to $(\Phi,\unlhd)$ and $B$ is an exact Borel subalgebra of $R_{\mathfrak{B}}$. The exact functor $\Rest$ sends the inclusion of $L_i^{R_{\mathfrak{B}}}$ into $\nabla_i^{R_{\mathfrak{B}}}$ to the inclusion of $\Rest(L_i^{R_{\mathfrak{B}}})$ into $\Rest(\nabla_i^{R_{\mathfrak{B}}})$. Theorem \ref{thm:koenigborel} implies that $\Rest(\nabla_i^{R_{\mathfrak{B}}})\cong Q_i^B=\nabla_i^B$. Consequently, the nonzero module $\Rest(L_i^{R_{\mathfrak{B}}})$ has simple socle $L_i^B$ and all its other composition factors must be of the form $L_j^B$ with $j\lhd i$.

Since the functor $\Rest$ is exact, then
\begin{align*}
[\Rest(L_i^{R_{\mathfrak{B}}})]&=[\Rest({\Delta}_i^{R_{\mathfrak{B}}})]-[\Rest(\Rad{{\Delta}_i^{R_{\mathfrak{B}}}})]=[\Rest(R_{\mathfrak{B}}\otimes_B L_i^B)]-[\Rest(\Rad{{\Delta}_i^{R_{\mathfrak{B}}}})] \\
&=[L_i^B]+\sum_{j\in \Phi}n_j(L_i^B) [Q_j^B]-[\Rest(\Rad{{\Delta}_i^{R_{\mathfrak{B}}}})]\\
&=[L_i^B]+\sum_{j\in \Phi}n_{ij} \dim L_j^B [\Rest({\nabla}_j^{R_{\mathfrak{B}}})]-[\Rest(\Rad{{\Delta}_i^{R_{\mathfrak{B}}}})]
\end{align*}
where the last two equalities follow from Lemma \ref{lem:lemma15} and Theorem \ref{thm:koenigborel}. By Lemma \ref{lem:lemma2}, the product $n_{ij} \dim L_j^B$ coincides with the integer $z_{ij}$ in the statement of the proposition. From the exactness of $\Rest$ it follows that
\begin{gather*}
[\Rest({\nabla}_j^{R_{\mathfrak{B}}})]=\sum_{k\in \Phi}[\nabla_j^{R_{\mathfrak{B}}}:L_k^{R_{\mathfrak{B}}}][\Rest (L_k^{R_{\mathfrak{B}}})],\\ [\Rest(\Rad{\Delta_i^{R_{\mathfrak{B}}}})]=\sum_{k\in \Phi}[\Rad{\Delta_i^{R_{\mathfrak{B}}}}:L_j^{R_{\mathfrak{B}}}][\Rest (L_j^{R_{\mathfrak{B}}})].
\end{gather*}
This proves the first identity in the statement of the proposition. The second identity follows from the properties of quasihereditary algebras.
\end{proof}
Notice that the formula derived in Proposition \ref{prop:dimvectors} is recursive. As input, it takes data corresponding to the quasihereditary structure of $(R_{\mathfrak{B}},\Phi,\unlhd)$, but it also depends on the dimension of the extensions of the simples over the underlying algebra of the bocs. Philosophically speaking, we want to be able to extract as much information as possible from a directed bocs by simply looking at the quasihereditary structure of its right algebra. Taking this into consideration, we shall see that it is sensible to focus on regular directed bocses. Other reasons for restricting our attention to regular directed bocses will hopefully become apparent later on.

\begin{cor}
\label{cor:recursivedimvectors}
Let $(A,\Phi,\unlhd)$ be a quasihereditary algebra and let $\mathfrak{B}=(B,W,\mu, \varepsilon)$ be a regular directed bocs such that $R_{\mathfrak{B}}$ is equivalent to $(A,\Phi,\unlhd)$. The following identity holds for every $i \in \Phi$:
\begin{align}
[\Rest(L_i^{R_{\mathfrak{B}}})]=& [L_i^B] + \sum_{\substack{j,k \in \Phi \\ k\unlhd j \lhd i}}[\nabla_j:L_k] \dim\left( \Hom{A}{\Delta_j}{\Delta_i}\right) [\Rest(L_k^{R_{\mathfrak{B}}})]  \label{eq:dimvectorsformula}\\
 &- \sum_{\substack{j \in \Phi \\ j \lhd i}}[\Delta_i:L_j][\Rest (L_j^{R_{\mathfrak{B}}})]. \nonumber
\end{align}
In particular, given any $X$ in $\Mod{R_{\mathfrak{B}}}$, the composition factors of $\Rest(X)$ only depend on the equivalence class of the quasihereditary algebra $(A,\Phi,\unlhd)$ (namely, on $[\Delta_i:L_j]$, $[\nabla_i:L_j]$ and $\dim(\Hom{A}{\Delta_i}{\Delta_j})$ for $i,j\in \Phi$) and on the composition factors of $X$ as an $R_{\mathfrak{B}}$-module.
\end{cor}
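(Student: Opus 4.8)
The plan is to derive Corollary \ref{cor:recursivedimvectors} directly from Proposition \ref{prop:dimvectors} by showing that, under the regularity hypothesis, the integer $z_{ij}$ in the statement of Proposition \ref{prop:dimvectors} collapses to $\dim(\Hom{R_{\mathfrak{B}}}{\Delta_j^{R_{\mathfrak{B}}}}{\Rad{\Delta_i^{R_{\mathfrak{B}}}}})$, and then to translate everything across the equivalence $\mathcal{F}(\Delta^A) \to \mathcal{F}(\Delta^{R_{\mathfrak{B}}})$. First I would recall that $\mathfrak{B}$ regular means, by Lemma \ref{lem:julianvanessa}(3), that $\dim(\Ext{R_{\mathfrak{B}}}{1}{\Delta_j^{R_{\mathfrak{B}}}}{\Delta_i^{R_{\mathfrak{B}}}}) = \dim(\Ext{B}{1}{L_j^B}{L_i^B})$ for all $i,j\in\Phi$; substituting this into the formula for $z_{ij}$ given in Proposition \ref{prop:dimvectors} makes the last two summands cancel, leaving $z_{ij} = \dim(\Hom{R_{\mathfrak{B}}}{\Delta_j^{R_{\mathfrak{B}}}}{\Rad{\Delta_i^{R_{\mathfrak{B}}}}})$.

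Next I would rewrite this $\Hom$-space in terms of $\HOM$-spaces between standard modules. Applying $\Hom{R_{\mathfrak{B}}}{\Delta_j^{R_{\mathfrak{B}}}}{-}$ to the short exact sequence $0 \to \Rad{\Delta_i^{R_{\mathfrak{B}}}} \to \Delta_i^{R_{\mathfrak{B}}} \to L_i^{R_{\mathfrak{B}}} \to 0$, and using that $\Hom{R_{\mathfrak{B}}}{\Delta_j^{R_{\mathfrak{B}}}}{L_i^{R_{\mathfrak{B}}}}$ is nonzero only when $i = j$ (as the top of $\Delta_j^{R_{\mathfrak{B}}}$ is $L_j^{R_{\mathfrak{B}}}$), we get, for $j \lhd i$, that $\dim(\Hom{R_{\mathfrak{B}}}{\Delta_j^{R_{\mathfrak{B}}}}{\Rad{\Delta_i^{R_{\mathfrak{B}}}}}) = \dim(\Hom{R_{\mathfrak{B}}}{\Delta_j^{R_{\mathfrak{B}}}}{\Delta_i^{R_{\mathfrak{B}}}})$; one should also note that the only summand index $j$ that actually occurs in Proposition \ref{prop:dimvectors}'s second identity satisfies $j \lhd i$, so this replacement is legitimate throughout. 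Then, since the equivalence of Theorem \ref{thm:kkodifficultimplication}/\ref{thm:kkorephrased} sends $\mathcal{F}(\Delta^A)$ to $\mathcal{F}(\Delta^{R_{\mathfrak{B}}})$ matching $\Delta_i^A \mapsto \Delta_i^{R_{\mathfrak{B}}}$, it is a fully faithful exact functor and hence preserves $\HOM$-dimensions between objects of $\mathcal{F}(\Delta)$, giving $\dim(\Hom{R_{\mathfrak{B}}}{\Delta_j^{R_{\mathfrak{B}}}}{\Delta_i^{R_{\mathfrak{B}}}}) = \dim(\Hom{A}{\Delta_j}{\Delta_i})$. Similarly, an equivalence of quasihereditary algebras preserves the combinatorial data $[\Delta_i:L_j]$ and $[\nabla_i:L_j]$ (equivalently, the matrices $(\Delta_i : \Delta_j)$-type data together with composition multiplicities of simples inside standards and costandards, which are invariants of the equivalence class by the Standardisation Theorem and the remarks following Definition \ref{defi:ultimo}). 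Substituting all of this into the second identity of Proposition \ref{prop:dimvectors} yields precisely \eqref{eq:dimvectorsformula}.

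Finally, for the ``in particular'' statement, I would argue by induction on the poset: the identity \eqref{eq:dimvectorsformula} expresses $[\Rest(L_i^{R_{\mathfrak{B}}})]$ in $G(B)$ in terms of $[\Rest(L_k^{R_{\mathfrak{B}}})]$ with $k \lhd i$ together with data depending only on $[(A,\Phi,\unlhd)]$, so by induction each $[\Rest(L_i^{R_{\mathfrak{B}}})]$ is an explicit $\Z$-linear combination (with coefficients depending only on the equivalence class) of the $[L_k^B]$. For general $X$ in $\Mod{R_{\mathfrak{B}}}$, exactness of $\Rest$ gives $[\Rest(X)] = \sum_{i\in\Phi} [X:L_i^{R_{\mathfrak{B}}}]\,[\Rest(L_i^{R_{\mathfrak{B}}})]$, so the composition factors of $\Rest(X)$ depend only on the composition factors of $X$ and on the equivalence class $[(A,\Phi,\unlhd)]$, as claimed.

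The routine but slightly delicate point — the one I would be most careful about — is checking that the $\HOM$-dimensions and the decomposition numbers $[\Delta_i:L_j]$, $[\nabla_i:L_j]$ genuinely transport across the equivalence. The $\HOM$-dimensions between standards are fine because the equivalence of Theorem \ref{thm:kkorephrased} is an equivalence of the categories $\mathcal{F}(\Delta)$ and standards lie in these categories; the decomposition numbers require one extra remark, namely that $[\Delta_i^A:L_j^A]$ can be recovered from $\mathcal{F}(\Delta^A)$ alone (for instance via $[\Delta_i:L_j] = \dim\Hom{A}{P_j^{?}}{\Delta_i}$-type formulas, or more simply because $(\Delta_i : \Delta_j)$ together with $[\Delta_i : L_i]=1$ and the triangularity determine them recursively), so that equivalent quasihereditary algebras share these numbers — this is already implicit in the discussion after Definition \ref{defi:ultimo} and in Remark \ref{rem:minimaladapted}, and I would just cite it rather than reprove it. No genuine obstacle is expected here; the corollary is essentially a specialisation and reformulation of Proposition \ref{prop:dimvectors}.
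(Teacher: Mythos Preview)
Your proposal is correct and follows essentially the same approach as the paper: the paper's proof also derives the formula directly from Proposition \ref{prop:dimvectors} combined with Lemma \ref{lem:julianvanessa}, then uses upwards induction on $(\Phi,\unlhd)$ and exactness of $\Rest$ for the ``in particular'' statement. You have simply spelled out explicitly two steps that the paper leaves implicit, namely the passage from $\Hom{R_{\mathfrak{B}}}{\Delta_j^{R_{\mathfrak{B}}}}{\Rad{\Delta_i^{R_{\mathfrak{B}}}}}$ to $\Hom{R_{\mathfrak{B}}}{\Delta_j^{R_{\mathfrak{B}}}}{\Delta_i^{R_{\mathfrak{B}}}}$ for $j\lhd i$, and the transport of $\HOM$-dimensions and decomposition numbers across the equivalence $\mathcal{F}(\Delta^A)\to\mathcal{F}(\Delta^{R_{\mathfrak{B}}})$.
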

\begin{proof}
The formula for $[\Rest(L_i^{R_{\mathfrak{B}}})]$ follows directly from the identities in Proposition \ref{prop:dimvectors} and from Lemma \ref{lem:julianvanessa}. Given any minimal element $i$ in the poset $(\Phi,\unlhd)$, then $\Rest (L_i^{R_{\mathfrak{B}}})\cong L_i^B$. By employing upwards induction on the poset $(\Phi,\unlhd)$, one is able to compute all the composition factors of $\Rest (L_i^{R_{\mathfrak{B}}})$ for every $i\in \Phi$, and these only depend on the multiplicities $[\Delta_k:L_l]$, $[\nabla_k:L_l]$ and $\dim(\Hom{A}{\Delta_k}{\Delta_l})$ for $k,l\in \Phi$. 

Observe now that $[\Rest(X)]=\sum_{i\in \Phi}[X:L_i^{R_{\mathfrak{B}}}][ \Rest(L_i^{R_{\mathfrak{B}}})]$ for every $R_{\mathfrak{B}}$-module $X$. As a result, it is also possible to calculate the composition factors of $\Rest(X)$, once the composition factors of the restriction of every simple $R_{\mathfrak{B}}$-module have been determined.
\end{proof}

The next proposition shows that the formula in \eqref{eq:dimvectorsformula} can be slightly simplified. Before proving this result, we need to clarify what is meant by immediate predecessor of an element in a poset.
\begin{defi}
An element $j$ in a poset $(\Phi,\unlhd)$ is an \emph{immediate predecessor} of $i\in \Phi$ if $j$ is a maximal element for which the strict inequality $j\lhd i$ holds. The set of all immediate predecessors of $i\in \Phi$ is denoted by $\Phi_{i^-}$.
\end{defi}
\begin{prop}
\label{prop:extra}
Let $(A,\Phi,\unlhd)$ be a quasihereditary algebra. Let $\mathfrak{B}=(B,W,\mu, \varepsilon)$ be a regular directed bocs such that $R_{\mathfrak{B}}$ is equivalent to $(A,\Phi,\unlhd)$.  For every $i \in \Phi$, the $B$-module $\Rest(L_i^{R_{\mathfrak{B}}})$ has simple socle $L_i^B$ and all its other composition factors are of the form $L_j^B$ with $j\lhd i$ and $j$ not an immediate predecessor of $i$. More precisely, 
\begin{align*}
[\Rest(L_i^{R_{\mathfrak{B}}})]=&[L_i^B]+\sum_{\substack{j\in \Phi,\, k \in \Phi\setminus \Phi_{i^-}\\ k\unlhd j \lhd i}}[\nabla_j:L_k] \dim\big(\Hom{A}{\Delta_j}{\Delta_i}\big)[\Rest(L_k^{R_{\mathfrak{B}}})]  \\
&- \sum_{\substack{j\in\Phi\setminus \Phi_{i^-} \\ j \lhd i}}[\Delta_i:L_j][\Rest (L_j^{R_{\mathfrak{B}}})].
\end{align*} 
In particular, if $i$ is either a minimal element in $(\Phi,\unlhd)$ or if all immediate predecessors of $i$ are minimal elements in $(\Phi,\unlhd)$, then $\Rest(L_i^{R_{\mathfrak{B}}})\cong L_i^B$.
\end{prop}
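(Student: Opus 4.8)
The plan is to start from the recursive formula established in Corollary \ref{cor:recursivedimvectors}, namely equation \eqref{eq:dimvectorsformula}, and to show that the contributions indexed by immediate predecessors of $i$ cancel. Concretely, I would fix $i\in\Phi$ and isolate, in the right-hand side of \eqref{eq:dimvectorsformula}, the terms where the ``bottom'' index $k$ equals some $p\in\Phi_{i^-}$. Since $p$ is an immediate predecessor of $i$, the only $j$ with $p\unlhd j\lhd i$ is $j=p$ itself, so in the first (costandard) sum the only summand involving $\Rest(L_p^{R_{\mathfrak{B}}})$ at the ``$k=p$'' slot comes from $j=p$, contributing $[\nabla_p:L_p]\dim(\Hom{A}{\Delta_p}{\Delta_i})[\Rest(L_p^{R_{\mathfrak{B}}})]=\dim(\Hom{A}{\Delta_p}{\Delta_i})[\Rest(L_p^{R_{\mathfrak{B}}})]$, using $[\nabla_p:L_p]=1$. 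In the second (standard) sum, the summand for $j=p$ contributes $-[\Delta_i:L_p][\Rest(L_p^{R_{\mathfrak{B}}})]$. I would also need to check that no \emph{other} $\Rest(L_k^{R_{\mathfrak{B}}})$ with $k\in\Phi_{i^-}$ sneaks in through the first sum via a larger $j$: if $k=p\in\Phi_{i^-}$ and $p\unlhd j\lhd i$ with $j\neq p$, then $p\lhd j\lhd i$ contradicts maximality of $p$ among elements strictly below $i$; so indeed only $j=p$ occurs.

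The heart of the argument is therefore the identity $\dim(\Hom{A}{\Delta_p}{\Delta_i})=[\Delta_i:L_p]$ for every $p\in\Phi_{i^-}$, equivalently $\dim(\Hom{R_{\mathfrak{B}}}{\Delta_p^{R_{\mathfrak{B}}}}{\Delta_i^{R_{\mathfrak{B}}}})=[\Delta_i^{R_{\mathfrak{B}}}:L_p^{R_{\mathfrak{B}}}]$. This is a standard fact about quasihereditary algebras: since $\End{A}{\Delta_p}\cong K$ and $\Delta_p$ has simple top $L_p$, a homomorphism $\Delta_p\to\Delta_i$ is determined by the image of a generator, and one shows that $\Hom{A}{\Delta_p}{\Delta_i}$ counts the multiplicity $[\Delta_i:L_p]$ precisely when $p$ is \emph{maximal} among the labels strictly below $i$ appearing in $\Delta_i$ --- any composition factor $L_p$ of $\Delta_i$ with $p$ maximal below $i$ must sit in the top of $\Rad\Delta_i$ (there is nothing strictly above $p$ and below $i$ to cover it), giving a surjection $\Delta_i\twoheadrightarrow$ (a module with top $L_p$) which lifts to a map $\Delta_p\to\Delta_i$; conversely every map $\Delta_p\to\Delta_i$ has image a quotient of $\Delta_p$, and a dimension/semisimplicity count at the ``level $p$'' of a $\Delta$-filtration of $\Rad\Delta_i$ (whose sections are $\Delta_q$ with $q\lhd i$) pins down the dimension. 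I would phrase this cleanly using that in $\mathcal F(\Delta)$ one has $\dim\Hom{A}{\Delta_p}{M}=(M:\Delta_p)$-type reciprocity only up to the relevant ordering; the clean statement I will invoke is that for $p$ maximal below $i$, every occurrence of $L_p$ in $\Delta_i$ forces a $\Delta_p$-section, so $\dim\Hom{A}{\Delta_p}{\Delta_i}=(\Delta_i:\Delta_p)=[\Delta_i:L_p]$.

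Granting this identity, the two isolated contributions cancel: $\dim(\Hom{A}{\Delta_p}{\Delta_i})[\Rest(L_p^{R_{\mathfrak{B}}})]-[\Delta_i:L_p][\Rest(L_p^{R_{\mathfrak{B}}})]=0$ for every $p\in\Phi_{i^-}$. Removing these cancelled terms from \eqref{eq:dimvectorsformula} restricts both summations to $k\in\Phi\setminus\Phi_{i^-}$ (first sum) and $j\in\Phi\setminus\Phi_{i^-}$ (second sum), which is exactly the displayed formula in the proposition; and combined with Proposition \ref{prop:dimvectors}, which already tells us $\Rest(L_i^{R_{\mathfrak{B}}})$ has simple socle $L_i^B$ with remaining factors $L_j^B$, $j\lhd i$, we upgrade the conclusion to: the remaining factors have $j\lhd i$ and $j\notin\Phi_{i^-}$ --- here one argues by upwards induction on $(\Phi,\unlhd)$, since the recursion expresses $[\Rest(L_i^{R_{\mathfrak{B}}})]$ in terms of $[\Rest(L_k^{R_{\mathfrak{B}}})]$ with $k\lhd i$, and by induction those involve no $L_m^B$ with $m\in\Phi_{k^-}$, but more simply: any $L_k^B$ appearing with nonzero total coefficient survives in the reduced sums, so $k\notin\Phi_{i^-}$. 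Finally, for the ``in particular'': if $i$ is minimal there is nothing strictly below it and the sums are empty, so $\Rest(L_i^{R_{\mathfrak{B}}})\cong L_i^B$; if every $p\in\Phi_{i^-}$ is minimal, then for any $k\unlhd j\lhd i$ with $k\notin\Phi_{i^-}$ we would need $k\lhd i$ with $k$ not an immediate predecessor, forcing some immediate predecessor $p$ with $k\lhd p\lhd i$, but $p$ minimal makes $k\lhd p$ impossible; hence the reduced sums are again empty and $\Rest(L_i^{R_{\mathfrak{B}}})\cong L_i^B$.

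The main obstacle I anticipate is pinning down the identity $\dim\Hom{A}{\Delta_p}{\Delta_i}=[\Delta_i:L_p]$ for $p$ an immediate predecessor of $i$ with a clean, citable argument rather than an ad hoc filtration chase --- one has to use both $[\Delta_p:L_p]=1$, $\End{A}{\Delta_p}\cong K$, and the maximality of $p$ strictly below $i$ in an essential way (the statement is false for general $p\lhd i$). Everything else is bookkeeping on the indices in \eqref{eq:dimvectorsformula}.
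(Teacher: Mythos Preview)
Your overall strategy matches the paper's: split the sums in \eqref{eq:dimvectorsformula} according to whether the running index lies in $\Phi_{i^-}$, and show the $\Phi_{i^-}$-part vanishes because $\dim\Hom{A}{\Delta_p}{\Delta_i}=[\Delta_i:L_p]$ for $p\in\Phi_{i^-}$. Your bookkeeping on the indices (in particular, that $p\unlhd j\lhd i$ with $p\in\Phi_{i^-}$ forces $j=p$) is correct, as is the ``in particular'' clause.

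The gap is exactly where you flagged it: your justification of the key identity is not sound. You invoke ``a $\Delta$-filtration of $\Rad\Delta_i$ (whose sections are $\Delta_q$ with $q\lhd i$)'', but $\Rad\Delta_i$ is \emph{not} $\Delta$-filtered in general---indeed, whether $\Rad\Delta_i\in\mathcal{F}(\nabla)$ is precisely what distinguishes the special class treated later in Theorem~\ref{thm:basicqh}. The paper's argument is short and avoids this entirely: take the canonical epic $\nu_p:P_p\twoheadrightarrow\Delta_p$ with kernel $U_p$. Every simple in the top of $U_p$ has label $k\rhd p$ (this is a standard consequence of (2) and (3) in Definition~\ref{defi:qh}). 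Hence any nonzero map $U_p\to\Rad\Delta_i$ would produce a composition factor $L_k$ of $\Rad\Delta_i$ with $p\lhd k\lhd i$, contradicting $p\in\Phi_{i^-}$. So $\Hom{A}{U_p}{\Rad\Delta_i}=0$, the monic $\Hom{A}{\nu_p}{\Rad\Delta_i}$ is an isomorphism, and
\[
\dim\Hom{A}{\Delta_p}{\Rad\Delta_i}=\dim\Hom{A}{P_p}{\Rad\Delta_i}=[\Rad\Delta_i:L_p],
\]
which is your identity (since $p\neq i$). Plug this in and your proof is complete.

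One minor point: to conclude that no $L_p^B$ with $p\in\Phi_{i^-}$ appears as a composition factor, it is not enough that the surviving $[\Rest(L_k^{R_{\mathfrak{B}}})]$ have $k\notin\Phi_{i^-}$; you must also use (from Proposition~\ref{prop:dimvectors}) that $[\Rest(L_k^{R_{\mathfrak{B}}}):L_m^B]\neq 0$ forces $m\unlhd k$, and then observe that $m\unlhd k\lhd i$ with $m\in\Phi_{i^-}$ is impossible. The paper phrases this as ``$S_1$ does not give rise to any composition factors of this form''.
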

\begin{proof}
By Proposition \ref{prop:dimvectors}, $\Rest(L_i^{R_{\mathfrak{B}}})$ has simple socle $L_i^B$ and all its other composition factors must be of the form $L_j^B$ with $j\lhd i$. According to Corollary \ref{cor:recursivedimvectors}, $[\Rest(L_i^{R_{\mathfrak{B}}})]=[L_i^B]+ S_1 + S_2$, with 
\begin{align*}
S_1=&\sum_{\substack{j\in \Phi,\, k \in \Phi\setminus \Phi_{i^-}\\ k\unlhd j \lhd i}}[\nabla_j:L_k] \dim\big(\Hom{A}{\Delta_j}{\Delta_i}\big)[\Rest(L_k^{R_{\mathfrak{B}}})]  \\
&- \sum_{\substack{j\in\Phi\setminus \Phi_{i^-} \\ j \lhd i}}[\Delta_i:L_j][\Rest (L_j^{R_{\mathfrak{B}}})], \end{align*}\[S_2=\sum_{j\in \Phi_{i^-}}\dim\big(\Hom{A}{\Delta_j}{\Rad{\Delta_i}}\big)[\Rest(L_j^{R_{\mathfrak{B}}})]  - \sum_{j\in \Phi_{i^-}}[\Rad{\Delta_i}:L_j][\Rest (L_j^{R_{\mathfrak{B}}})].
\]
The expression $S_1$ does not give rise to any composition factors of $\Rest(L_i^{R_{\mathfrak{B}}})$ of the form $L_j^B$ with $j\in\Phi_{i^-}$; in fact, all composition factors of $\Rest(L_i^{R_{\mathfrak{B}}})$ of this form (if any) must come from $S_2$. We claim that $S_2=0$, therefore proving the equality in the statement of the proposition and showing that $L_{j}^B$ is never a composition factor of $\Rest(L_i^{R_{\mathfrak{B}}})$ when $j$ is an immediate predecessor of $i$. In order to see this, consider $j \in \Phi_{i^-}$ and let $U_j$ be the kernel of the canonical epic $\nu_j: P_j \twoheadrightarrow \Delta_j$. Note that all the simples in the top of $U_j$ are of the form $L_k$, where $k\rhd j$ (recall (2) and (3) in Definition \ref{defi:qh}). As a consequence, the only morphism in $\Hom{A}{U_j}{\Rad{\Delta_i}}$ is the zero morphism, otherwise some simple $L_k$ in the top of $U_j$ would be mapped to a composition factor of $\Rad{\Delta_i}$, leading to the contradictory inequality $j\lhd k \lhd i$. So $\Hom{A}{U_j}{\Rad{\Delta_i}}=0$ and the monic $\Hom{A}{\nu_j}{\Rad{\Delta_i}}$ is an isomorphism. It then follows that
\[\dim\big(\Hom{A}{\Delta_j}{\Rad{\Delta_i}}\big) =\dim\big(\Hom{A}{P_j}{\Rad{\Delta_i}} )=[\Rad{\Delta_i}:L_j],\]
hence $S_2=0$.
\end{proof}
The next two results summarise and clarify most of the information gathered in this subsection.
\begin{thm}
\label{thm:algo1}
Let $(A,\Phi,\unlhd)$ be a quasihereditary algebra and consider the sequence of elements $(v_{i})_{i\in\Phi}$ in the free $\Z$-module ${\Z}^{\Phi}$ on $\Phi$ defined recursively through the identity
\begin{align}
\label{eq:recursivevector}
v_{i}&={\epsilon}_i+\sum_{\substack{j,k\in \Phi\\ k\unlhd j \lhd i}}[\nabla_j:L_k] \dim\left( \Hom{A}{\Delta_j}{\Delta_i}\right) v_{k}  - \sum_{\substack{j\in\Phi \\ j \lhd i}}[\Delta_i:L_j]v_{j} \nonumber
\\&={\epsilon}_i+\sum_{\substack{j\in \Phi,\, k \in \Phi\setminus \Phi_{i^-}\\ k\unlhd j \lhd i}}[\nabla_j:L_k] \dim\left( \Hom{A}{\Delta_j}{\Delta_i}\right) v_{k}  - \sum_{\substack{j\in\Phi\setminus \Phi_{i^-} \\ j \lhd i}}[\Delta_i:L_j]v_{j},
\end{align}
where $\{{\epsilon}_i \mid i\in\Phi\}$ constitutes the standard basis of ${\Z}^{\Phi}$. Let $v_{ij}$ be the $j$th coordinate of $v_i$ for $i,j\in \Phi$. Then $(v_{ij})_{i,j\in\Phi}$ is a family of nonnegative integers satisfying $v_{ii}=1$ and $v_{ij}=0$ for every $i\in\Phi$ and $j\in\Phi_{i^-}\cup\{k\in \Phi \mid k \ntrianglelefteq i\}$. Up to relabelling of indices, the family $(v_{ij})_{i,j\in\Phi}$ is an invariant of the equivalence class $[(A,\Phi,\unlhd)]$; more precisely, it only depends on the composition factors of the standard and costandard $A$-modules and on the dimension of the $\HOM$-spaces between standard modules. Furthermore, the following assertions hold for every regular directed bocs $\mathfrak{B}=(B,W,\mu, \varepsilon)$ whose right algebra is equivalent to $(A,\Phi,\unlhd)$: 
\begin{enumerate}
\item $v_{ij}=[\Rest(L_i^{R_{\mathfrak{B}}}):L_j^B]$;
\item for every $i \in \Phi$, the $B$-module $\Rest(L_i^{R_{\mathfrak{B}}})$ has simple socle $L_i^B$ and all its other composition factors are of the form $L_j^B$ with $j\lhd i$ and $j$ not an immediate predecessor of $i$;
\item if $X$ is an $R_{\mathfrak{B}}$-module, then
\[[\Rest(X)]=\sum_{i,j\in \Phi}v_{ij}[X:L_i^{R_{\mathfrak{B}}}][L_j^B]=\sum_{\substack{i\in \Phi,\, j \in \Phi\setminus \Phi_{i^-}\\ j\unlhd i}}v_{ij}[X:L_i^{R_{\mathfrak{B}}}][L_j^B].
\] 
\end{enumerate}
\end{thm}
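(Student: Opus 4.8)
The plan is to deduce everything from the results already established in this subsection, treating the recursion \eqref{eq:recursivevector} as an abstract bookkeeping device that mirrors the identity for $[\Rest(L_i^{R_{\mathfrak{B}}})]$ proved in Corollary \ref{cor:recursivedimvectors} and Proposition \ref{prop:extra}. First I would observe that the recursion \eqref{eq:recursivevector} is well-posed: the right-hand side only involves $v_k$ with $k\lhd i$, so by upwards induction on the poset $(\Phi,\unlhd)$ the sequence $(v_i)_{i\in\Phi}$ is uniquely determined, with $v_i = \epsilon_i$ whenever $i$ is minimal. Since the coefficients appearing in \eqref{eq:recursivevector} are exactly $[\Delta_i:L_j]$, $[\nabla_j:L_k]$ and $\dim(\Hom{A}{\Delta_j}{\Delta_i})$, which by \cite[Lemma 2.4]{MR1211481} and standard facts about $\mathcal{F}(\Delta)$ only depend on the quasihereditary structure (hence on $[(A,\Phi,\unlhd)]$), the family $(v_{ij})$ is an invariant of the equivalence class.

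Next I would prove part (1), namely $v_{ij}=[\Rest(L_i^{R_{\mathfrak{B}}}):L_j^B]$, for an arbitrary regular directed bocs $\mathfrak{B}$ with $R_{\mathfrak{B}}$ equivalent to $(A,\Phi,\unlhd)$. By Theorem \ref{thm:kkorephrased} we may identify $\mathcal{F}(\Delta^A)$ with $\mathcal{F}(\Delta^{R_{\mathfrak{B}}})$, so that $[\Delta_i^{R_{\mathfrak{B}}}:L_j^{R_{\mathfrak{B}}}]=[\Delta_i:L_j]$, and similarly for $\nabla$ and for the $\HOM$-dimensions (these are determined by $\mathcal{F}(\Delta)$ via standard reciprocity, or can be read off directly since both are equivalent quasihereditary algebras). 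Set $w_i := [\Rest(L_i^{R_{\mathfrak{B}}})] \in G(B)$ and identify $G(B)$ with $\Z^\Phi$ using the basis $\{[L_j^B] \mid j \in \Phi\}$; this is legitimate because $B$ and $R_{\mathfrak{B}}$ have the same labelling set $\Phi$ for simples. Then Corollary \ref{cor:recursivedimvectors} (respectively the refined form in Proposition \ref{prop:extra}) says precisely that $(w_i)_{i\in\Phi}$ satisfies the very same recursion \eqref{eq:recursivevector} — one checks that $w_i = [L_i^B] + (\text{same sum with } v_k \text{ replaced by } w_k)$, and that the base case $w_i = [L_i^B] = \epsilon_i$ for minimal $i$ also matches. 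By the uniqueness of solutions to the recursion (upwards induction on $(\Phi,\unlhd)$), we conclude $w_i = v_i$ for all $i$, i.e.\ $[\Rest(L_i^{R_{\mathfrak{B}}}):L_j^B]=v_{ij}$.

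Part (1) immediately yields the structural claims. For the assertion that $(v_{ij})$ are nonnegative integers with $v_{ii}=1$: these are composition multiplicities of an actual $B$-module $\Rest(L_i^{R_{\mathfrak{B}}})$, and Proposition \ref{prop:dimvectors}/\ref{prop:extra} already showed this module has simple socle $L_i^B$, forcing $v_{ii}\geq 1$; combined with $[\Rad\Delta_i^{R_{\mathfrak{B}}}:L_i^{R_{\mathfrak{B}}}]=0$ and exactness of $\Rest$ one gets $v_{ii}=1$ exactly. The vanishing $v_{ij}=0$ for $j\in\Phi_{i^-}\cup\{k\mid k\ntrianglelefteq i\}$ is exactly part (2), which is the content of Proposition \ref{prop:extra}: no $L_j^B$ with $j$ an immediate predecessor of $i$, and none with $j\ntrianglelefteq i$ (the latter because $\Rest(L_i^{R_{\mathfrak{B}}})\hookrightarrow \Rest(\nabla_i^{R_{\mathfrak{B}}})=\nabla_i^B$ by Theorem \ref{thm:koenigborel}, and $\nabla_i^B$ only has factors $L_j^B$ with $j\unlhd i$). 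Finally, part (3) follows from part (1) by additivity of $[\Rest(-)]$ on short exact sequences: $[\Rest(X)]=\sum_{i\in\Phi}[X:L_i^{R_{\mathfrak{B}}}][\Rest(L_i^{R_{\mathfrak{B}}})]=\sum_{i,j}v_{ij}[X:L_i^{R_{\mathfrak{B}}}][L_j^B]$, and the second expression in (3) just uses the established vanishing $v_{ij}=0$ for $j\in\Phi_{i^-}$ and for $j\ntrianglelefteq i$.

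The only genuinely delicate point — and thus the main obstacle — is the comparison of multiplicities across the two algebras $B$ and $R_{\mathfrak{B}}$ versus the reference algebra $A$: one must be careful that "$[\Delta_i:L_j]$", "$[\nabla_i:L_j]$" and "$\dim\Hom{A}{\Delta_i}{\Delta_j}$" really do transport unchanged from $A$ to $R_{\mathfrak{B}}$ under the equivalence of Theorem \ref{thm:kkorephrased}, so that the recursion written with $A$-data genuinely computes the $B$-data associated to $\mathfrak{B}$. This is where the labelling of simples over $B$ by $\Phi$ (as in Theorem \ref{thm:kko1}) and the compatibility of the equivalence $\mathcal{F}(\Delta^A)\to\mathcal{F}(\Delta^{R_{\mathfrak{B}}})$ with the labellings must be invoked explicitly; everything else is a routine induction once this identification is in place.
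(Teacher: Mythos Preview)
Your proposal is correct and follows essentially the same approach as the paper: the recursion \eqref{eq:recursivevector} is matched against the identity for $[\Rest(L_i^{R_{\mathfrak{B}}})]$ established in Corollary \ref{cor:recursivedimvectors} and Proposition \ref{prop:extra}, and the existence of at least one regular directed bocs (Theorem \ref{thm:kkorephrased}) is invoked to deduce nonnegativity of the $v_{ij}$ from their interpretation as composition multiplicities. The paper's proof is a two-line citation of exactly these three results, and your write-up is a faithful expansion of that citation; your remark about transporting data along the equivalence $\mathcal{F}(\Delta^A)\to\mathcal{F}(\Delta^{R_{\mathfrak{B}}})$ is the one point the paper leaves implicit.
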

\begin{proof}
This is a consequence of Corollary \ref{cor:recursivedimvectors}, Proposition \ref{prop:extra} and Theorem \ref{thm:kkorephrased}. Theorem \ref{thm:kkorephrased} is needed to guarantee the existence of a regular directed bocs with right algebra equivalent to $(A,\Phi,\unlhd)$ and to consequently assure that all elements of the family $(v_{ij})_{i,j\in\Phi}$ are nonnegative; this is the quickest way to proof the nonnegativity of $(v_{ij})_{i,j\in\Phi}$.
\end{proof}

\begin{rem}
	\label{rem:matrix}
	Given a quasihereditary algebra $(A,\Phi,\unlhd)$, it may be convenient to record the nonnegative integers $v_{ij}$ described in Theorem \ref{thm:algo1} as entries of a matrix. By choosing some refinement of $(\Phi,\unlhd)$ to a total order, the integers $v_{ij}$ may be naturally arranged into a lower triangular matrix $V_{[(A,\Phi,\unlhd)]}$ with ones in the diagonal and zeros in the lower diagonal. Such a matrix $V_{[(A,\Phi,\unlhd)]}$ is invertible and it is uniquely determined by the equivalence class of $(A,\Phi,\unlhd)$, up to certain simultaneous permutations of the rows and columns. 
\end{rem}

\begin{rem}
	By using Proposition \ref{prop:dimvectors}, one concludes that the algorithm in Theorem \ref{thm:algo1} may be adapted and generalised to derive information about directed bocses.
\end{rem}

Assume that $(A,\Phi,\unlhd)$ is a quasihereditary algebra and that $\mathfrak{B}=(B,W,\mu, \varepsilon)$ is some regular directed bocs whose right algebra is equivalent to $(A,\Phi,\unlhd)$. Theorem \ref{thm:algo1} implies that the $B$-module $\Rest (L_i^{R_{\mathfrak{B}}})$ has length $\ell(\Rest (L_i^{R_{\mathfrak{B}}}))=\sum_{j\in\Phi}v_{ij}$. By adjusting the formula \eqref{eq:recursivevector}, one easily obtains a recursive method to compute the length of $\Rest (L_i^{R_{\mathfrak{B}}})$ for every $i\in\Phi$.
\begin{cor}
\label{cor:algo1}
Let $(A,\Phi,\unlhd)$ be a quasihereditary algebra and consider the sequence of integers $(l_{i})_{i\in\Phi}$ defined recursively through the identity
	\begin{align}
\label{eq:multiplicities}
l_i=&1+ \sum_{\substack{j,k \in \Phi \\ k\unlhd j \lhd i}}  l_k  [\nabla_j: L_k] \dim\left( \Hom{A}{\Delta_j}{\Delta_i}\right)  - \sum_{\substack{j\in \Phi \\  j \lhd i}} l_j  [\Delta_i :L_j]\nonumber\\
=&1+ \sum_{\substack{j\in \Phi,\, k \in \Phi\setminus \Phi_{i^-}\\ k\unlhd j \lhd i}}  l_k  [\nabla_j: L_k] \dim\left( \Hom{A}{\Delta_j}{\Delta_i}\right) - \sum_{\substack{j\in\Phi\setminus \Phi_{i^-} \\ j \lhd i}} l_j  [\Delta_i :L_j]. 
\end{align}
The elements of the sequence $(l_{i})_{i\in\Phi}$ are all positive integers and 
\begin{equation}
\label{eq:formulali}
l_i=\sum_{j\in\Phi}v_{ij}= \sum_{\substack{j\in \Phi \setminus\Phi_{i^-}\\  j \unlhd i}} v_{ij},
\end{equation}
with $v_{ij}$ as described in Theorem \ref{thm:algo1}. If $\mathfrak{B}=(B,W,\mu, \varepsilon)$ is a regular directed bocs whose right algebra is equivalent to $(A,\Phi,\unlhd)$, then $\ell ( \Rest (L_i^{R_{\mathfrak{B}}})) =l_i$ for every $i\in \Phi$.
\end{cor}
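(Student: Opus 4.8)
The plan is to derive Corollary \ref{cor:algo1} as a formal consequence of Theorem \ref{thm:algo1}, by applying the ``total multiplicity'' functional to the recursion \eqref{eq:recursivevector}. First I would introduce the $\Z$-linear map $\sigma\colon\Z^{\Phi}\to\Z$ sending $\sum_{j\in\Phi}a_j\epsilon_j$ to $\sum_{j\in\Phi}a_j$, so that $\sigma(\epsilon_i)=1$ and $\sigma(v_i)=\sum_{j\in\Phi}v_{ij}$. Applying $\sigma$ to either of the two expressions for $v_i$ in \eqref{eq:recursivevector} produces exactly the corresponding expression in \eqref{eq:multiplicities}, but with $l_i,l_j,l_k$ replaced by $\sigma(v_i),\sigma(v_j),\sigma(v_k)$. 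Since $(\Phi,\unlhd)$ is finite and every term on the right-hand side of \eqref{eq:recursivevector} other than $\epsilon_i$ involves only the $v_j$ with $j\lhd i$ -- and similarly every term of \eqref{eq:multiplicities} other than the leading $1$ involves only the $l_j$ with $j\lhd i$ -- both recursions have a unique solution, obtained by upward induction on $(\Phi,\unlhd)$ starting from the minimal elements (where the value is $1$). I would conclude that $l_i=\sigma(v_i)=\sum_{j\in\Phi}v_{ij}$ for every $i\in\Phi$; this simultaneously shows that the two forms of \eqref{eq:multiplicities} define the same sequence, since the two forms of \eqref{eq:recursivevector} do.

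The remaining assertions then follow from the properties of the $v_{ij}$ established in Theorem \ref{thm:algo1}. As every $v_{ij}$ is a nonnegative integer with $v_{ii}=1$, I get $l_i=\sum_{j\in\Phi}v_{ij}\geq v_{ii}=1>0$, so each $l_i$ is a positive integer. Since moreover $v_{ij}=0$ whenever $j\in\Phi_{i^-}\cup\{k\in\Phi\mid k\ntrianglelefteq i\}$, the sum $\sum_{j\in\Phi}v_{ij}$ reduces to $\sum_{j\in\Phi\setminus\Phi_{i^-},\ j\unlhd i}v_{ij}$, which gives \eqref{eq:formulali}.

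For the statement about bocses, let $\mathfrak{B}=(B,W,\mu,\varepsilon)$ be a regular directed bocs whose right algebra is equivalent to $(A,\Phi,\unlhd)$. Theorem \ref{thm:algo1} tells us that the simple $B$-modules are indexed by $\Phi$ and that $v_{ij}=[\Rest(L_i^{R_{\mathfrak{B}}}):L_j^B]$, so summing over $j$ yields
\[\ell\big(\Rest(L_i^{R_{\mathfrak{B}}})\big)=\sum_{j\in\Phi}[\Rest(L_i^{R_{\mathfrak{B}}}):L_j^B]=\sum_{j\in\Phi}v_{ij}=l_i\]
for every $i\in\Phi$, as required. I do not expect any genuine obstacle here: the only point needing a word of care is the well-foundedness of the recursion, which is immediate from the finiteness of the poset and the strict inequalities, and once this is settled the corollary is just Theorem \ref{thm:algo1} transported through the linear functional $\sigma$.
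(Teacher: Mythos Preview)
Your proposal is correct and follows exactly the approach the paper intends: the paper does not give a separate proof for this corollary, but indicates just before its statement that one obtains the recursion for $l_i$ by ``adjusting the formula \eqref{eq:recursivevector}'', i.e.\ by summing the coordinates of $v_i$, which is precisely your linear functional $\sigma$. The remaining claims about positivity, the restricted-sum form \eqref{eq:formulali}, and the length identity are deduced from Theorem \ref{thm:algo1} in the same way you do.
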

\begin{rem}
	\label{rem:matrix2}
	Let $(A,\Phi,\unlhd)$ be a quasihereditary algebra and regard the family of integers $(v_{ij})_{i,j\in \Phi}$ described in Theorem \ref{thm:algo1} as a matrix $V_{[(A,\Phi,\unlhd)]}$. For any regular directed bocs $\mathfrak{B}$, the length $l_i$ of $\Rest(L_i^{R_{\mathfrak{B}}})$ coincides with the sum of the elements in the $i$th row of $V_{[(A,\Phi,\unlhd)]}$.
\end{rem}
\begin{rem}\label{rem:bgg}
	According to the Bernstein--Gelfand--Gelfand Reciprocity Law (\cite[Lemmas 2.4, 2.5]{MR1211481}), the identities
	\[
	(X:\Delta_i)=\dim\left( \Hom{A}{X}{\nabla_i}\right) ,\quad (Y:\nabla_i)=\dim\left( \Hom{A}{\Delta_i}{Y}\right) 
	\]
	hold for every $X$ in $\mathcal{F}(\Delta)$, $Y$ in $\mathcal{F}(\nabla)$ and $i\in \Phi$. In particular, $(P_j:\Delta_i)=[\nabla_i:L_j]$ and $(Q_j:\nabla_i)=[\Delta_i:L_j]$, so the recursive formulas \eqref{eq:recursivevector} and \eqref{eq:multiplicities} may be expressed in alternative ways.
\end{rem}
We illustrate the usefulness of the previous results with two examples. The next example shows, in particular, that the sequence $(l_i)_{i\in\Phi}$ in Corollary \ref{cor:algo1} may not be monotonically increasing, even when taking the minimal adapted order of the quasihereditary algebra (recall Remark \ref{rem:minimaladapted}).
\begin{ex}
\label{ex:first}
Consider the path algebra $A=KQ$, with 
\[
Q=
\begin{tikzcd}
\overset{1}{\circ} \ar[r] & \overset{2}{\circ} & \overset{3}{\circ} \ar[r]  \ar[l]  & \overset{4}{\circ}
\end{tikzcd}.
\]
The algebra $A$ is quasihereditary with respect to the natural order on $\underline{4}=\{1,2,3,4\}$. The projective indecomposable $A$-modules are given by
\[P_1=\begin{tikzcd}[row sep=tiny, column sep=tiny]
\circled{1}\ar[d,dash]
\\ 2
\end{tikzcd}, \quad
P_2=\begin{tikzcd}[row sep=tiny, column sep=tiny]
\circled{2}
\end{tikzcd}, \quad
P_3=\begin{tikzcd}[row sep=tiny, column sep=tiny]
&\circled{3} \ar[dr,dash]\ar[dl,dash]&
\\ \circled{2} & & 4
\end{tikzcd}, \quad
P_4=\begin{tikzcd}[row sep=tiny, column sep=tiny]
\circled{4}
\end{tikzcd},
\]
and the standard modules are marked by rectangles. Using \eqref{eq:recursivevector} and Remark \ref{rem:bgg}, we obtain $v_1={\epsilon}_1$, $v_2={\epsilon}_2$,
\begin{align*}
v_3=& {\epsilon}_3+ (P_1:\Delta_1)\dim\left( \Hom{A}{\Delta_1}{\Delta_3}\right)v_1\\
&+ (P_1:\Delta_2)\dim\left( \Hom{A}{\Delta_2}{\Delta_3}\right)v_1-[\Delta_3:L_1]v_1\\
=&{\epsilon}_1+{\epsilon}_3, \\
v_4=&{\epsilon}_4,
\end{align*}
where the last equality is due to the fact that $\Delta_4$ is simple. Assume that $\mathfrak{B}=(B,W,\mu, \varepsilon)$ is a regular directed bocs whose right algebra is equivalent to $(A,\underline{4},\leq)$. According to Theorem \ref{thm:algo1}, $\Rest(L_i^{R_{\mathfrak{B}}})=L_i^B$ for $i \in\{1,2,4\}$ and the $B$-module $\Rest(L_3^{R_{\mathfrak{B}}})$ has simple socle $L_3^B$ and simple top $L_1^B$. By arranging the coordinates of $(v_i)_{i\in\underline{4}}$ into a matrix, we get
\[
V_{[(A,\underline{4},\leq)]}=\begin{pmatrix}
1 & 0 & 0 & 0 \\
0 & 1 & 0 & 0 \\
1 & 0 & 1 & 0 \\
0 & 0 & 0 & 1
\end{pmatrix}.
\]
From Remark \ref{rem:matrix2}, we derive $l_1=l_2=l_4=1$ and $l_3=2$, hence $(l_{i})_{i\in\underline{4}}$ is not increasing.
\end{ex}

\begin{ex}
	\label{ex:identity}
	Let $A$ be quasihereditary with respect to a some poset $(\Phi,\unlhd)$ of height two. By Theorem \ref{thm:algo1}, we have $v_{ij}=\delta_{ij}$ for every $i,j\in \Phi$. That is, the corresponding matrix $V_{[(A,\Phi,\unlhd)]}$ is the identity matrix and $l_i=1$ for every $i\in\Phi$.
\end{ex}
\subsection{Determining all the good quasihereditary algebras}
We wish to determine all the `good' quasihereditary algebras, i.e.~we want to find all the quasihereditary that have a regular exact Borel subalgebra and to identify, among those, the ones that contain a basic regular exact Borel subalgebra. With this goal in mind, we start with an easy consequence of the results in the previous subsection.

\begin{prop}
	\label{prop:main0}
	Let $(A,\Phi,\unlhd)$ be a quasihereditary algebra and consider the associated family of integers $(v_{ij})_{i,j \in \Phi}$ described in Theorem \ref{thm:algo1}. Let $\mathfrak{B}=(B,W,\mu, \varepsilon)$ be a regular directed bocs whose right algebra is equivalent to $(A,\Phi,\unlhd)$ and set 
	
	\[m_i=\sum\limits_{j \in \Phi}v_{ij} \dim L_j^B= \sum\limits_{\substack{j \in \Phi\setminus \Phi_{i^-} \\ j\unlhd i}}v_{ij} \dim L_j^B.\]
	The following statements hold for every $i \in \Phi$:
	\begin{enumerate}
		\item $\dim L_i^{R_{\mathfrak{B}}}=\dim ( \Rest(L_i^{R_{\mathfrak{B}}}))=m_i$;
		\item the algebra $\End{A}{\bigoplus_{i \in \Phi} P_i^{m_i}}\op$ is naturally quasihereditary with respect to the poset $(\Phi,\unlhd)$ and, further, it is equivalent to $(A,\Phi,\unlhd)$ and isomorphic to $R_{\mathfrak{B}}$; 
		\item up to isomorphism of algebras, $R_{\mathfrak{B}}$ only depends on the dimension of the simple $B$-modules, on the composition factors of the standard and costandard $A$-modules and on the dimension of the $\HOM$-spaces between standard modules.
	\end{enumerate}		
\end{prop}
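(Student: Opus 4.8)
The plan is to read off all three assertions from Theorem~\ref{thm:algo1} and routine Morita theory, treating (1), (2), (3) in turn. For part (1), note first that $\Rest\colon\MOD{R_{\mathfrak{B}}}\to\MOD{B}$ is restriction of scalars along the monomorphism $\iota_{\mathfrak{B}}\colon B\hookrightarrow R_{\mathfrak{B}}$, hence does not alter the underlying $K$-vector space, so $\dim L_i^{R_{\mathfrak{B}}}=\dim\big(\Rest(L_i^{R_{\mathfrak{B}}})\big)$. Part (1) of Theorem~\ref{thm:algo1} identifies $v_{ij}$ with the multiplicity $[\Rest(L_i^{R_{\mathfrak{B}}}):L_j^B]$, so summing dimensions of composition factors gives $\dim\big(\Rest(L_i^{R_{\mathfrak{B}}})\big)=\sum_{j\in\Phi}v_{ij}\dim L_j^B$; the vanishing of $v_{ij}$ for $j\in\Phi_{i^-}\cup\{k\in\Phi\mid k\ntrianglelefteq i\}$ recorded in Theorem~\ref{thm:algo1} reduces this to the restricted sum displayed in the statement, so both quantities equal $m_i$.

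For part (2), I would use that, by Theorems~\ref{thm:kko1} and~\ref{thm:kkorephrased}, $R_{\mathfrak{B}}$ is quasihereditary with respect to $(\Phi,\unlhd)$ and equivalent to $(A,\Phi,\unlhd)$ via an equivalence $\mathcal{F}(\Delta^A)\to\mathcal{F}(\Delta^{R_{\mathfrak{B}}})$ taking $\Delta_i^A$ to $\Delta_i^{R_{\mathfrak{B}}}$; in particular $R_{\mathfrak{B}}$ is Morita equivalent to $A$. By the Standardisation Theorem (\cite[Theorem 2]{MR1211481}) this equivalence extends to a Morita equivalence $G\colon\Mod{A}\to\Mod{R_{\mathfrak{B}}}$, and, since additive equivalences preserve tops and therefore projective covers, $G$ sends $L_i^A=\TOP\Delta_i^A$ to $L_i^{R_{\mathfrak{B}}}=\TOP\Delta_i^{R_{\mathfrak{B}}}$ and $P_i^A$ to $P_i^{R_{\mathfrak{B}}}$. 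Writing $Q=G^{-1}(R_{\mathfrak{B}})$, one has $R_{\mathfrak{B}}\cong\End{A}{Q}\op$; decomposing the regular $R_{\mathfrak{B}}$-module as $\bigoplus_{i\in\Phi}(P_i^{R_{\mathfrak{B}}})^{\dim L_i^{R_{\mathfrak{B}}}}$ and using $\dim L_i^{R_{\mathfrak{B}}}=m_i$ from part (1) gives $Q\cong\bigoplus_{i\in\Phi}P_i^{m_i}$, so that $R_{\mathfrak{B}}\cong\End{A}{\bigoplus_{i\in\Phi}P_i^{m_i}}\op$. The remaining claims of (2) follow because this algebra is isomorphic to $R_{\mathfrak{B}}$, which is quasihereditary with respect to $(\Phi,\unlhd)$ and equivalent to $(A,\Phi,\unlhd)$; the adjective \emph{naturally} refers to the quasihereditary structure transported from $A$ along the Morita equivalence $\Hom{A}{\bigoplus_{i\in\Phi}P_i^{m_i}}{-}$, whose standard modules are the images of the $\Delta_i^A$ and which coincides with the one inherited from $R_{\mathfrak{B}}$.

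Part (3) is then a formality, and it is the one place where the single delicate point of the argument sits. By Theorem~\ref{thm:algo1} the integers $v_{ij}$ depend only on $[\Delta_i^A:L_j^A]$, $[\nabla_i^A:L_j^A]$ and $\dim\Hom{A}{\Delta_i}{\Delta_j}$ — in fact only on the equivalence class $[(A,\Phi,\unlhd)]$ — so the tuple $(m_i)_{i\in\Phi}=\big(\sum_j v_{ij}\dim L_j^B\big)_{i\in\Phi}$ is a function of these data together with $(\dim L_j^B)_{j\in\Phi}$; combining this with the isomorphism $R_{\mathfrak{B}}\cong\End{A}{\bigoplus_{i\in\Phi}P_i^{m_i}}\op$ of (2), and with the observation that passing to another representative of $[(A,\Phi,\unlhd)]$ produces an isomorphic endomorphism algebra (a Morita equivalence between the two representatives matches their labelled indecomposable projectives), gives (3). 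The step I expect to require genuine care is the extension in part (2): one must be sure that the equivalence of $\Delta$-filtered categories extends to a Morita equivalence $\Mod{A}\to\Mod{R_{\mathfrak{B}}}$ respecting the labelling of the simples by $\Phi$, so that the multiplicity of $P_i^A$ in $Q$ is exactly $\dim L_i^{R_{\mathfrak{B}}}$. This is precisely what the Standardisation Theorem delivers, and everything else is the dimension count of part (1) together with the standard facts that Morita equivalences preserve tops, projective covers and endomorphism algebras.
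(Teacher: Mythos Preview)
Your proof is correct and follows essentially the same approach as the paper: both arguments obtain (1) from Theorem~\ref{thm:algo1} together with the observation that restriction preserves dimension, deduce (2) from the Standardisation Theorem by identifying $R_{\mathfrak{B}}$ with $\End{A}{\bigoplus_i P_i^{m_i}}\op$ via the Morita equivalence and the equality $m_i=\dim L_i^{R_{\mathfrak{B}}}$, and read off (3) from the invariance of the $v_{ij}$ established in Theorem~\ref{thm:algo1}. Your treatment of (2) is simply more explicit than the paper's about why the Morita equivalence respects the labelling of simples and projectives.
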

\begin{proof}
	Let $\mathfrak{B}=(B,W,\mu, \varepsilon)$ be some regular directed bocs such that $(A,\Phi,\unlhd)$ is equivalent to the quasihereditary algebra $R_{\mathfrak{B}}$. It is clear that any restriction functor preserves dimensions, hence $\dim L_i^{R_{\mathfrak{B}}}=\dim ( \Rest(L_i^{R_{\mathfrak{B}}}))$. Theorem \ref{thm:algo1} assures that $[\Rest(L_i^{R_{\mathfrak{B}}}):L_j^B]=v_{ij}$, so the dimension of $\Rest(L_i^{R_{\mathfrak{B}}})$ must coincide with $m_i$. This proves (1).

	Dlab and Ringel's Standardisation Theorem (\cite[Theorem 2]{MR1211481}) implies that $R_{\mathfrak{B}}$ is Morita equivalent to $A$, so $R_{\mathfrak{B}}$ must be isomorphic to $\End{A}{\bigoplus_{i\in \Phi}P_i^{m_i}}\op$, since $m_i=\dim L_i^{R_{\mathfrak{B}}}$. 
	
	According to Theorem \ref{thm:algo1}, the integers $v_{ij}$ appearing in the definition of $m_i$ only depend on the composition factors of the standard and costandard $A$-modules and on the dimension of the $\HOM$-spaces between standard modules. This proves (3) in the statement of the proposition.
\end{proof}

We shall now translate our conclusions about regular directed bocses into statements about regular exact Borel subalgebras of quasihereditary algebras. For this, we will make use of the following result of Brzezi\'{n}ski, Koenig and K\"{u}lshammer.

\begin{thm}[{\cite[part of Theorem 3.13 and its proof]{doi:10.1112/blms.12331}, \cite[Theorem 3.6]{manuela}}]
	\label{thm:bkk}
There is a one-to-one correspondence between regular directed bocses and quasihereditary algebras containing a regular exact Borel subalgebra, which restricts to a bijection between minimal regular directed bocses and quasihereditary algebras containing a basic regular exact Borel subalgebra. This correspondence maps a regular directed bocs $\mathfrak{B}=(B,W,\mu, \varepsilon)$ to the associated embedding $\iota_{\mathfrak{B}}:B \hookrightarrow R_{\mathfrak{B}}$ of the regular exact Borel subalgebra $B$ into the quasihereditary algebra $R_{\mathfrak{B}}$.
\end{thm}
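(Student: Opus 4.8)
The plan is to realise the claimed bijection as a pair of mutually inverse explicit constructions: the passage from a regular directed bocs to a regular exact Borel subalgebra, which is already assembled in the excerpt, and its inverse, the passage from a regular exact Borel subalgebra back to a bocs. First I would make precise what the right-hand side is: an object there is an algebra embedding $\iota\colon B\hookrightarrow A$ realising $B$ as a regular exact Borel subalgebra of a quasihereditary algebra $A$, taken up to isomorphism, i.e.\ up to algebra isomorphisms $B\cong B'$ and $A\cong A'$ intertwining $\iota$ with $\iota'$. With this convention the forward map $\mathfrak{B}\mapsto\iota_{\mathfrak{B}}$ is already known to be well defined: by Theorems \ref{thm:kko1} and \ref{thm:kkorephrased} a regular directed bocs $\mathfrak{B}=(B,W,\mu,\varepsilon)$ produces a quasihereditary algebra $R_{\mathfrak{B}}$ into which $B$ is embedded, via $\iota_{\mathfrak{B}}$, as a regular exact Borel subalgebra, and isomorphic bocses visibly produce isomorphic embeddings. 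Since being minimal means precisely that $B$ is basic, since a regular exact Borel subalgebra $\iota_{\mathfrak{B}}\colon B\hookrightarrow R_{\mathfrak{B}}$ is basic precisely when $B$ is basic, and since the forward map leaves $B$ untouched, the asserted restriction to the basic case will drop out automatically once the main correspondence is established.

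Next I would exhibit the candidate inverse. Given a regular exact Borel subalgebra $\iota\colon B\hookrightarrow A$, exactness of $A\otimes_B-$ forces $A$ to be flat, hence (being finite-dimensional) projective, as a right $B$-module; so $W:=\Hom{B}{A}{B}$, the dual of $A$ taken with respect to its right $B$-action and endowed with the $B$-$B$-bimodule structure induced by the two-sided $B$-action on $A$, is finitely generated and projective as a left $B$-module and satisfies $\Hom{B}{W}{B}\cong A$ as $B$-$B$-bimodules by double duality. Taking $\varepsilon$ to be the transpose of $\iota$ and $\mu$ the transpose of the multiplication map $A\otimes_B A\to A$ then produces a bocs $\mathfrak{B}=(B,W,\mu,\varepsilon)$ with $R_{\mathfrak{B}}\cong A$ and $\iota_{\mathfrak{B}}=\iota$; coassociativity and the counit axiom for $\mathfrak{B}$ translate into associativity and unitality of $A$. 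In the other direction, if one starts from a directed bocs then $\varepsilon$ splits and $\Ker{\varepsilon}$ is a projective bimodule, so $W\cong B\oplus\Ker{\varepsilon}$ is again finitely generated and projective as a left $B$-module, and the very same double-duality identifications recover $(W,\mu,\varepsilon)$ from $R_{\mathfrak{B}}$ together with its $B$-$B$-bimodule structure. Thus the two constructions are mutually inverse on objects, which yields injectivity and surjectivity at once — provided one verifies the single outstanding point below.

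That point is the main obstacle: one must check that the bocs $\mathfrak{B}$ built from a regular exact Borel subalgebra $\iota$ really is \emph{directed} and \emph{regular}, i.e.\ that the regularity hypothesis on $\iota$ has genuinely been exploited and not merely its exactness. Here $\varepsilon$ is epic because $\iota$ splits as a map of one-sided $B$-modules; $B$ is quasihereditary with simple standard modules by the definition of exact Borel subalgebra; the projectivity of $\Ker{\varepsilon}$ as a $B$-$B$-bimodule is precisely where regularity of $\iota$ first enters, and once it is available Lemmas \ref{lem:lemma15} and \ref{lem:lemma2} furnish the decomposition $\Ker{\varepsilon}\cong\bigoplus_{k,l}(Be_k\otimes_K e_lB)^{n_{kl}}$ into indecomposable bimodules, with the triangularity $l\lhd k$ of the surviving summands read off from the composition-factor structure of the standard modules (as in the proof of Proposition \ref{prop:extra}); finally, regularity of $\mathfrak{B}$ is exactly condition (3) of Lemma \ref{lem:julianvanessa}, which holds because $\iota$ is, by hypothesis, a \emph{regular} exact Borel subalgebra. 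This last verification is the heart of the matter and is carried out in \cite[Theorem 3.13 and its proof]{doi:10.1112/blms.12331} and \cite[Theorem 3.6]{manuela}; the role of the plan above is to package those results into the bijection asserted here, together with the immediate observation that it restricts to minimal bocses and to basic regular exact Borel subalgebras.
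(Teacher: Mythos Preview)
The paper does not prove this theorem; it is quoted from \cite{doi:10.1112/blms.12331} and \cite{manuela} without argument. Your outline matches the architecture of those references: the forward map via Theorem~\ref{thm:kko1}, the inverse via coring--ring duality (setting $W$ to be the right $B$-dual of $A$, with $\varepsilon$ and $\mu$ the transposes of $\iota$ and of the multiplication $A\otimes_B A\to A$), and the identification of the crux as verifying that the reconstructed bocs is directed and regular.

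One point deserves more care. Your assertion that $\iota$ splits as a one-sided $B$-module map (equivalently, that the reconstructed $\varepsilon$ is epic) does not follow merely from $A$ being projective as a right $B$-module: that projectivity says only that $A$ is a summand of a free right $B$-module, not that $B$ itself is a summand of $A$. The existence of such a splitting is the \emph{normality} condition of \cite{doi:10.1112/blms.12331}, and showing that regularity in the sense of Definition~\ref{defi:borelprops} forces normality is part of the content of the cited references (this is where \cite{manuela} is needed), not a preliminary triviality. Likewise, the bimodule projectivity of $\Ker\varepsilon$ and the triangularity of its indecomposable summands are consequences of normality together with the homological condition rather than of the $\operatorname{Ext}$-isomorphisms defining regularity alone; regularity of the bocs then corresponds to Lemma~\ref{lem:julianvanessa}(3), as you say. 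With those attributions sharpened, and given your explicit deferral to the references for the substantive verifications, the outline is sound and agrees with what the cited sources do.
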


The next theorem provides a description of all quasihereditary algebras with a regular exact Borel subalgebra that lie in a given equivalence class $[(A,\Phi, \unlhd)]$. Theorem \ref{thm:main} is also a stepping stone towards the characterisation of the quasihereditary algebras possessing a regular exact Borel subalgebra in Theorem \ref{thm:mainsuper}.
\begin{thm}
\label{thm:main}
Let $(A,\Phi,\unlhd)$ be a quasihereditary algebra and consider the corresponding family of integers $(v_{ij})_{i,j \in \Phi}$ described in Theorem \ref{thm:algo1}. The following assertions hold:
\begin{enumerate}
\item for every sequence of positive integers $(k_i)_{i\in \Phi}$ there exists a regular directed bocs $\mathfrak{B}=(B,W,\mu, \varepsilon)$ such that $R_{\mathfrak{B}}$ is equivalent to $(A,\Phi,\unlhd)$ and $\dim L_i^B = k_i$ for every $i \in \Phi$;
\item for every sequence of positive integers $(k_i)_{i\in \Phi}$ there exists a quasihereditary algebra $(R,\Phi,\unlhd)\in[(A,\Phi,\unlhd)]$ which contains a regular exact Borel subalgebra $B$ satisfying $\dim L_i^B = k_i$ for every $i \in \Phi$;
\item if $(R,\Omega,\preceq)\in[(A,\Phi,\unlhd)]$ has a regular exact Borel subalgebra $B$, then it is possible to relabel the simples over $R$ by elements of $\Phi$ (concretely, consider any relabelling where $\Delta_i$ gets mapped to $\Delta_i^R$, $i\in\Phi$, under an equivalence of categories $\mathcal{F}(\Delta)\rightarrow \mathcal{F}(\Delta^R)$), so that $(R,\Omega,\preceq)$ is equivalent and isomorphic to $(\End{A}{\bigoplus_{i\in \Phi}P_i^{m_i}}\op,\Phi,\unlhd)$ with
\[m_i=\sum\limits_{\substack{j \in \Phi\setminus \Phi_{i^-} \\ j\unlhd i}}v_{ij}\dim L_i^B=\dim L_i^R.\]
\end{enumerate}
\end{thm}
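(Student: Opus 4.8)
\textbf{Proof strategy for Theorem \ref{thm:main}.}

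The plan is to reduce everything to the existence statement in Theorem \ref{thm:kkorephrased}, the numerical bookkeeping in Theorem \ref{thm:algo1} and Proposition \ref{prop:main0}, and the bijection in Theorem \ref{thm:bkk}, so that the proof becomes an orchestration of already-established facts rather than a fresh construction. For part (1), the one nontrivial input is that one can \emph{prescribe} the dimensions of the simple $B$-modules of a regular directed bocs with right algebra in $[(A,\Phi,\unlhd)]$. By Theorem \ref{thm:kkorephrased} there is at least one regular directed bocs $\mathfrak{B}_0=(B_0,W_0,\mu_0,\varepsilon_0)$ with $R_{\mathfrak{B}_0}$ equivalent to $(A,\Phi,\unlhd)$, with $B_0$ quasihereditary on $(\Phi,\unlhd)$ and simple standard modules. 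The idea is then to pass from $B_0$ to a Morita-equivalent algebra $B$ by replacing the projective cover $P_i^{B_0}$ of $L_i^{B_0}$ by $k_i$ copies of it, i.e. setting $B=\End{B_0}{\bigoplus_{i\in\Phi}(P_i^{B_0})^{k_i}}\op$; one needs to check that the bocs structure $(W_0,\mu_0,\varepsilon_0)$ transports along this Morita equivalence to a bocs $(B,W,\mu,\varepsilon)$ which is again directed and regular, and whose right algebra is the corresponding ``inflation'' of $R_{\mathfrak{B}_0}$, still equivalent to $(A,\Phi,\unlhd)$ since Morita-type operations on $B$ do not change the $\mathcal{F}(\Delta)$-category of $R_{\mathfrak{B}}$. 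Directedness is preserved because $\Ker\varepsilon$ is obtained from $\Ker\varepsilon_0$ by the same Morita functor and hence remains a finite direct sum of bimodules $Be_j\otimes_K e_iB$ with $j\lhd i$; regularity is preserved because, by Lemma \ref{lem:julianvanessa}, it is a condition on $\dim\Ext{B}{1}{L_j^B}{L_i^B}$ versus $\dim\Ext{R_{\mathfrak{B}}}{1}{\Delta_j^{R_{\mathfrak{B}}}}{\Delta_i^{R_{\mathfrak{B}}}}$, and Morita equivalence preserves these $\Ext$-dimensions between simples while Theorem \ref{thm:kkodifficultimplication} ensures the right-algebra side is unchanged up to equivalence. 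Finally $\dim L_i^B=k_i\cdot(\dim\text{-vector count})$; to get exactly $\dim L_i^B=k_i$ one should instead take $B$ to be a basic version of $B_0$ (which exists and is a minimal regular directed bocs by Theorem \ref{thm:bkk}) and inflate \emph{that}, so that $\dim L_i^B=k_i$ on the nose.

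For part (2), I would simply apply the bijection of Theorem \ref{thm:bkk} to the bocs produced in part (1): it sends the regular directed bocs $\mathfrak{B}=(B,W,\mu,\varepsilon)$ to the embedding $\iota_{\mathfrak{B}}:B\hookrightarrow R_{\mathfrak{B}}$ exhibiting $B$ as a regular exact Borel subalgebra of $R_{\mathfrak{B}}$, and $R_{\mathfrak{B}}$ lies in $[(A,\Phi,\unlhd)]$ by construction, with $\dim L_i^B=k_i$. Setting $R=R_{\mathfrak{B}}$ finishes this part.

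For part (3), suppose $(R,\Omega,\preceq)\in[(A,\Phi,\unlhd)]$ has a regular exact Borel subalgebra $B$. By Theorem \ref{thm:bkk} this datum corresponds to a regular directed bocs $\mathfrak{B}=(B,W,\mu,\varepsilon)$ with $R_{\mathfrak{B}}\cong R$, and by Theorem \ref{thm:kko1} the standard $R$-modules are $\Delta_i^R=R\otimes_B L_i^B$, so the poset $(\Omega,\preceq)$ may be identified with $(\Phi,\unlhd)$ via the fixed equivalence $\mathcal{F}(\Delta)\to\mathcal{F}(\Delta^R)$ carrying $\Delta_i$ to $\Delta_i^R$; this is the asserted relabelling. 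Now apply Proposition \ref{prop:main0}: part (1) of that proposition gives $\dim L_i^R=\dim L_i^{R_{\mathfrak{B}}}=m_i$ with $m_i=\sum_{j\in\Phi}v_{ij}\dim L_j^B=\sum_{j\in\Phi\setminus\Phi_{i^-},\,j\unlhd i}v_{ij}\dim L_j^B$, and part (2) of that proposition gives that $R_{\mathfrak{B}}$ is equivalent to $(A,\Phi,\unlhd)$ and isomorphic to $\End{A}{\bigoplus_{i\in\Phi}P_i^{m_i}}\op$, which is naturally quasihereditary on $(\Phi,\unlhd)$. Chaining $R\cong R_{\mathfrak{B}}\cong\End{A}{\bigoplus_{i\in\Phi}P_i^{m_i}}\op$ and recording that these isomorphisms respect the quasihereditary structures yields the claim, with the $\preceq\leftrightarrow\unlhd$ identification already in place from the relabelling.

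The main obstacle is the bocs-transport argument in part (1): one must verify that the comultiplication and counit genuinely descend along the Morita equivalence $B_0\mapsto B$ to give a well-defined bocs, that $\Ker\varepsilon$ has the required directed shape, and that the right algebra of the transported bocs is the algebra $\End{A}{\bigoplus_i P_i^{m_i}}\op$ with the expected $m_i$ (equivalently, that inflating the Borel subalgebra inflates $R_{\mathfrak{B}}$ predictably via Proposition \ref{prop:main0}). Everything else is a direct invocation of Theorems \ref{thm:kkorephrased}, \ref{thm:bkk}, \ref{thm:algo1} and Proposition \ref{prop:main0}.
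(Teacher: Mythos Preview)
Your outline is correct in spirit and your treatment of part (3) matches the paper exactly. The difference lies in how you and the paper handle (1) and (2).

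You prove (1) first by transporting the entire bocs structure $(B_0,W_0,\mu_0,\varepsilon_0)$ along a Morita equivalence of $B_0$, then deduce (2) from (1) via Theorem \ref{thm:bkk}. As you note, this leaves as an obstacle the verification that comultiplication, counit, directedness and the identification of the new right algebra all survive the transport. This can be done, but it is genuine bookkeeping.

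The paper reverses the order and thereby sidesteps the obstacle. It proves (2) directly, as follows: starting from any regular directed bocs $\mathfrak{B}=(B,W,\mu,\varepsilon)$ with $R_{\mathfrak{B}}\in[(A,\Phi,\unlhd)]$, it uses Lemma \ref{lem:lemma1} to see that the unit of $R_{\mathfrak{B}}\otimes_B-\dashv\Rest$ is monic, hence $R_{\mathfrak{B}}\otimes_B-$ is faithful. Faithfulness yields an algebra monomorphism
\[
\iota:\ B'=\End{B}{G}\op\ \hookrightarrow\ \End{R_{\mathfrak{B}}}{R_{\mathfrak{B}}\otimes_B G}\op=R',\qquad G=\bigoplus_{i\in\Phi}(P_i^B)^{k_i},
\]
so $\dim L_i^{B'}=k_i$ automatically (no need to pass to a basic $B_0$ first). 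One then checks that $B'$ is a regular exact Borel subalgebra of $R'$ by considering the composite functor $F=\Hom{R_{\mathfrak{B}}}{R_{\mathfrak{B}}\otimes_B G}{-}\circ(R_{\mathfrak{B}}\otimes_B-)\circ(G\otimes_{B'}-)$; the outer factors are Morita equivalences, so $F$ inherits exactness, the correct effect on simples, and the $\Ext^n$-isomorphisms from $R_{\mathfrak{B}}\otimes_B-$. The Eilenberg--Watts Theorem identifies $F$ with $R'\otimes_{B'}-$, which is precisely what is needed for $B'$ to be a regular exact Borel subalgebra. Part (1) is then a one-line consequence of (2) and Theorem \ref{thm:bkk}.

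The payoff of the paper's route is that it never manipulates $(\mu,\varepsilon)$: it works entirely at the level of the induction functor and its adjoints, where the required properties (exactness, effect on simples, $\Ext$-isomorphisms) are manifestly Morita-invariant, and uses Eilenberg--Watts once to recognise the resulting functor as a tensor product. Your route would also work, but the verification you flag as the ``main obstacle'' is precisely the step the paper's argument is designed to avoid.
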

\begin{proof}
	Fix a quasihereditary algebra $(A,\Phi,\unlhd)$ and a sequence of positive integers $(k_i)_{i\in \Phi}$. By Theorem \ref{thm:kkorephrased}, there exists a regular directed bocs $\mathfrak{B}=(B,W,\mu, \varepsilon)$ whose right algebra $R_{\mathfrak{B}}$ is equivalent to $(A,\Phi,\unlhd)$. Furthermore, $B$ is a regular exact Borel subalgebra of $R_{\mathfrak{B}}$. According to Lemma \ref{lem:lemma1}, the unit $\eta_X$ of the adjunction $R_{\mathfrak{B}}\otimes_B - \dashv \Rest$ is a monic for every $X$ in $\MOD{B}$. Lemma $4.5.13$ in \cite{riehl2017category} consequently implies that the functor $R_{\mathfrak{B}}\otimes_B -: \MOD{B} \rightarrow\MOD{R_{\mathfrak{B}}}$ is faithful, hence it gives rise to a monomorphism of algebras
	\[
	 \iota: B'=\End{B}{G}\op\lhook\joinrel\longrightarrow \End{R_{\mathfrak{B}}}{R_{\mathfrak{B}}\otimes_B G }\op=R',
	\]
	where $G$ is the projective genetator $\bigoplus_{i\in\Phi}\left( P_i^B\right)^{k_i}$ of $\MOD{B}$. Since $\Rest$ is an exact functor, $R_{\mathfrak{B}}\otimes_B-$ preserves projective objects (\cite[Proposition $2.3.10$]{wiebel}). Hence the algebras $B'$ and $R'$ are Morita equivalent to $B$ and $R_{\mathfrak{B}}$, respectively, and they are both quasihereditary with respect to the indexing poset $(\Phi,\unlhd)$. More precisely, $(B',\Phi,\unlhd)\in [(B,\Phi,\unlhd)]$ and $(R',\Phi,\unlhd)\in [(R_{\mathfrak{B}},\Phi,\unlhd)]=[(A,\Phi,\unlhd)]$. Furthermore, notice that $\dim L_i^{B'}=k_i$ for every $i \in \Phi$. We claim that $B'$ is a regular exact Borel subalgebra of $R'$. To check this, consider the functor $F$ given by the composition 
\[	F:
	\begin{tikzcd}[ column sep = huge]
	\MOD{B'} \ar[r, "G\otimes_{B'}-"] & \MOD{B} \ar[r, "R_{\mathfrak{B}}\otimes_B-"] & \MOD{R_{\mathfrak{B}}}\ar[r, "\Hom{R_{\mathfrak{B}}}{R_{\mathfrak{B}}\otimes_B G}{-}"] & \MOD{R'}
	\end{tikzcd}.\]
	Notice that the functors $G\otimes_{B'}-$ and $\Hom{R_{\mathfrak{B}}}{R_{\mathfrak{B}}\otimes_B G}{-}$ are equivalences, so they preserve all categorial constructions. Since $B$ is a regular exact Borel subalgebra of $R_{\mathfrak{B}}$, then $F$ is an exact functor which preserves all direct sums, sends simple modules to the corresponding standard modules and induces isomorphisms
	\[
	\Ext{B'}{n}{L^{B'}_i}{L^{B'}_j} \longrightarrow \Ext{A}{n}{F(L^{B'}_i)}{F(L^{B'}_j)}
	\]
	for every $n\geq 1$ and $i,j\in\Phi$. The Eilenberg--Watts Theorem (see Theorem 1 in \cite{MR118757} and the first paragraph of its proof) implies that $F$ is naturally isomorphic to $R'\otimes_{B'}-$, so $B'$ is a regular exact Borel subalgebra of $R'$. This proves assertion (2) in the statement of the theorem. Part (1) follows from part (2) and Theorem \ref{thm:bkk}. 
	
	For part (3), note that $R$ can be identified with the right algebra of some regular directed bocs $\mathfrak{B}=(B,W,\mu, \varepsilon)$: this is a consequence of Theorem \ref{thm:bkk}. Part (3) follows then from Proposition \ref{prop:main0}.
\end{proof}

As a direct consequence of the previous theorem, one deduces the following result.

\begin{cor}
	\label{cor:mainprimitive}
	Let $(A,\Phi,\unlhd)$ be a quasihereditary algebra. Consider the sequence of positive integers $(l_i)_{i \in \Phi}$ described in Corollary \ref{cor:algo1} and set \[R_{[(A,\Phi,\unlhd)]}=\End{A}{\bigoplus_{i\in \Phi}P_i^{l_i}}\op.\]
	The algebra $R_{[(A,\Phi,\unlhd)]}$ satisfies the following properties:
	\begin{enumerate}
		\item $R_{[(A,\Phi,\unlhd)]}$ is naturally quasihereditary with respect to $(\Phi,\unlhd)$, it is equivalent to $(A,\Phi,\unlhd)$ and it only depends on the equivalence class $[(A,\Phi,\unlhd)]$ of $(A,\Phi,\unlhd)$ (concretely, on the composition factors of the standard and costandard modules and on the dimension of $\HOM$-spaces between standard modules);
		\item up to isomorphism of algebras, $R_{[(A,\Phi,\unlhd)]}$ is the unique quasihereditary algebra equivalent to $(A,\Phi,\unlhd)$ that contains a basic regular exact Borel subalgebra;
		\item if $\mathfrak{B}=(B,W,\mu, \varepsilon)$ is a minimal regular directed bocs whose right algebra is equivalent to $(A,\Phi,\unlhd)$, then the right algebra $R_{\mathfrak{B}}$ of $\mathfrak{B}$ is isomorphic to $R_{[(A,\Phi,\unlhd)]}$ and consequently, up to isomorphism, $R_{\mathfrak{B}}$ does not depend on the choice of a minimal regular directed bocs $\mathfrak{B}$.
		\end{enumerate}
\end{cor}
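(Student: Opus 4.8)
The plan is to deduce all three items directly from Theorem~\ref{thm:main}, Proposition~\ref{prop:main0} and Corollary~\ref{cor:algo1}, specialised to the constant sequence $k_i=1$ for every $i\in\Phi$; the only numerical ingredient is the identity $\sum_{j\in\Phi}v_{ij}=l_i$ from \eqref{eq:formulali}. So there is no genuinely new argument to produce here — the content has already been isolated in the earlier results, and the task is to assemble them and to be careful about labellings.

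For parts (1) and (3), the first step is to apply Theorem~\ref{thm:main}(1) with $k_i=1$, producing a regular directed bocs $\mathfrak{B}=(B,W,\mu,\varepsilon)$ with $R_{\mathfrak{B}}$ equivalent to $(A,\Phi,\unlhd)$ and $\dim L_i^B=1$ for every $i$; since all simple $B$-modules are one-dimensional over the algebraically closed field $K$, the algebra $B$ is basic, so $\mathfrak{B}$ is minimal. By Proposition~\ref{prop:main0}(1)--(2), $R_{\mathfrak{B}}$ is naturally quasihereditary with respect to $(\Phi,\unlhd)$ and isomorphic to $\End{A}{\bigoplus_{i\in\Phi}P_i^{m_i}}\op$ with $m_i=\sum_{j\in\Phi}v_{ij}\dim L_j^B=\sum_{j\in\Phi}v_{ij}=l_i$; hence $R_{\mathfrak{B}}\cong R_{[(A,\Phi,\unlhd)]}$. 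Transporting the exact Borel subalgebra $\iota_{\mathfrak{B}}(B)$ along this isomorphism exhibits a basic regular exact Borel subalgebra inside $R_{[(A,\Phi,\unlhd)]}$, and since $R_{\mathfrak{B}}$ is equivalent to $(A,\Phi,\unlhd)$ so is $R_{[(A,\Phi,\unlhd)]}$; the dependence only on $[(A,\Phi,\unlhd)]$ is then exactly Proposition~\ref{prop:main0}(3) once the equalities $\dim L_j^B=1$ are inserted. For part (3) one argues identically: if $\mathfrak{B}'=(B',W',\mu',\varepsilon')$ is \emph{any} minimal regular directed bocs whose right algebra is equivalent to $(A,\Phi,\unlhd)$, then $\dim L_i^{B'}=1$ and Proposition~\ref{prop:main0}(1)--(2) give $R_{\mathfrak{B}'}\cong\End{A}{\bigoplus_{i\in\Phi}P_i^{l_i}}\op=R_{[(A,\Phi,\unlhd)]}$.

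For part (2), existence is already contained in part (1), so only uniqueness remains: if $(R,\Omega,\preceq)\in[(A,\Phi,\unlhd)]$ contains a basic regular exact Borel subalgebra $B$, then $\dim L_i^B=1$, and Theorem~\ref{thm:main}(3) yields a relabelling of the simples of $R$ by $\Phi$ together with an isomorphism $R\cong\End{A}{\bigoplus_{i\in\Phi}P_i^{m_i}}\op$ with $m_i=\sum_j v_{ij}\dim L_i^B=\sum_j v_{ij}=l_i$, i.e.\ $R\cong R_{[(A,\Phi,\unlhd)]}$; alternatively one may invoke Theorem~\ref{thm:bkk} to realise $R$ as the right algebra of a minimal regular directed bocs and then apply part (3). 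I expect the only delicate point to be bookkeeping rather than mathematics: one must check that the description of $R_{[(A,\Phi,\unlhd)]}$ as an endomorphism algebra built over a \emph{fixed} representative $A$ is unaffected by replacing $A$ with a Morita-equivalent representative and by the relabelling of the indexing poset forced by an equivalence $\mathcal{F}(\Delta)\to\mathcal{F}(\Delta^{R})$. This is handled by Dlab and Ringel's Standardisation Theorem (\cite[Theorem 2]{MR1211481}) together with the fact, already recorded in Theorem~\ref{thm:algo1} and Corollary~\ref{cor:algo1}, that the family $(v_{ij})$ and the integers $l_i$ transport correctly along such equivalences, so that the projective generator $\bigoplus_{i\in\Phi}P_i^{l_i}$ corresponds under the induced bijection of simples.
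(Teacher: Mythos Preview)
Your proposal is correct and follows essentially the same route as the paper's proof. The paper is terser—it observes that part (1) is clear from the fact that $(l_i)_{i\in\Phi}$ depends only on the data listed in Corollary~\ref{cor:algo1}, obtains part (2) from Theorem~\ref{thm:main}(3) together with \eqref{eq:formulali}, and deduces part (3) from part (2) via Theorem~\ref{thm:bkk}—whereas you unpack the existence half of part (2) explicitly (applying Theorem~\ref{thm:main}(1)--(2) or Proposition~\ref{prop:main0} with $k_i=1$) and prove part (3) directly from Proposition~\ref{prop:main0} rather than routing through part (2) and Theorem~\ref{thm:bkk}; both orderings are valid and the underlying computation $m_i=\sum_j v_{ij}\cdot 1=l_i$ is the same.
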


\begin{proof}
The sequence $(l_i)_{i \in \Phi}$ described in Corollary \ref{cor:algo1} is totally determined by the composition factors of the standard and costandard $A$-modules and by the dimension of $\HOM$-spaces between standard modules. Part (1) in the statement of the theorem is therefore clear.

Part (2) follows from part (3) of Theorem \ref{thm:main} and from the formula \eqref{eq:formulali}. Part (3) is then a consequence of part (2) and Theorem \ref{thm:bkk}.\end{proof}

Fix a quasihereditary algebra $(A,\Phi,\unlhd)$ and picture the associated family of integers $(v_{ij})_{i,j \in \Phi}$ as a matrix $V_{[(A,\Phi,\unlhd)]}$. By Remark \ref{rem:matrix}, the matrix $V_{[(A,\Phi,\unlhd)]}$ is nonsingular (more precisely, it can be realised as a lower triangular with ones in the diagonal). Arrange the dimensions of the simples over $A$ into a column vector $b_{(A,\Phi,\unlhd)}=(\dim L_i)_{i\in\Phi}$. As a highlight in this paper, we prove that $(A,\Phi,\unlhd)$ has a regular exact Borel subalgebra exactly when the unique solution of the linear system of equations $V_{[(A,\Phi,\unlhd)]}x=b_{(A,\Phi,\unlhd)}$ is a vector with positive integer entries.
\begin{thm}
	\label{thm:mainsuper}
	Let $(A,\Phi,\unlhd)$ be a quasihereditary algebra. Consider the corresponding matrix $V_{[(A,\Phi,\unlhd)]}=(v_{ij})_{i,j \in \Phi}$ and the sequence $(l_i)_{i\in \Phi}$ described, respectively, in Theorem \ref{thm:algo1} and Remark \ref{rem:matrix}, and in Corollary \ref{cor:algo1}. The following assertions hold:
\begin{enumerate}
	\item the algebra $(A,\Phi,\unlhd)$ contains a regular exact Borel subalgebra if and only if
	there exists a sequence of positive integers $(k_i)_{i\in \Phi}$ satisfying
	\begin{equation}
	\label{eq:mostimpthm}
	\dim L_i= \sum\limits_{\substack{j \in \Phi\setminus \Phi_{i^-} \\ j\unlhd i}}v_{ij}k_j
	\end{equation}
	for every $i\in \Phi$, i.e.~if all the entries of the unique solution of the nonsingular linear system $V_{[(A,\Phi,\unlhd)]}x=(\dim L_i)_{i\in\Phi}$ are positive integers;
	\item if $(A,\Phi,\unlhd)$ has a regular exact Borel subalgebra $B$, then $(\dim L_i^B)_{i\in \Phi}$ is the unique solution of the linear system $V_{[(A,\Phi,\unlhd)]}x=(\dim L_i)_{i\in\Phi}$;
	\item the dimension of the simple modules over a regular exact Borel subalgebra of $(A,\Phi,\unlhd)$ is univocally determined by the dimension of the simple modules over $A$, by the composition factors of the standard and costandard $A$-modules and by the dimension of the $\HOM$-spaces between standard modules;
	\item if $B$ is a regular exact Borel subalgebra of $(A,\Phi,\unlhd)$ and $\Rest$ is the corresponding restriction functor, then $[\Rest(L_i):L_j^B]=v_{ij}$ and $\ell(\Rest(L_i))=l_i$ for every $i,j\in \Phi$;
	\item the algebra $(A,\Phi,\unlhd)$ contains a basic regular exact Borel subalgebra if and only if $\dim L_i=l_i$ for every $i\in \Phi$.
\end{enumerate}
\end{thm}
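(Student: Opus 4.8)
The plan is to deduce Theorem \ref{thm:mainsuper} almost entirely from Theorem \ref{thm:main}, Corollary \ref{cor:mainprimitive} and Theorem \ref{thm:bkk}, so that essentially no new argument is needed beyond bookkeeping.

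First I would prove (1). For the ``if'' direction, suppose $(k_i)_{i\in\Phi}$ is a sequence of positive integers solving $V_{[(A,\Phi,\unlhd)]}x=(\dim L_i)_{i\in\Phi}$. By part (2) of Theorem \ref{thm:main}, there is a quasihereditary algebra $(R,\Phi,\unlhd)\in[(A,\Phi,\unlhd)]$ with a regular exact Borel subalgebra $B$ with $\dim L_i^B=k_i$; by part (3) of that theorem, $\dim L_i^R=\sum_{j}v_{ij}k_j=\dim L_i$ for every $i$. Since $R$ and $A$ are Morita equivalent (the equivalence class consists of Morita-equivalent algebras) and have the same simple dimension vector, they are in fact isomorphic, so $A\cong R$ has a regular exact Borel subalgebra. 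For the ``only if'' direction, if $(A,\Phi,\unlhd)$ has a regular exact Borel subalgebra $B$, then by Theorem \ref{thm:bkk} $A$ is (isomorphic to) the right algebra $R_{\mathfrak{B}}$ of a regular directed bocs $\mathfrak{B}=(B,W,\mu,\varepsilon)$; part (1) of Theorem \ref{thm:algo1} (or Proposition \ref{prop:main0}(1)) then gives $\dim L_i^A=\dim L_i^{R_{\mathfrak{B}}}=\sum_j v_{ij}\dim L_j^B$, so $x=(\dim L_i^B)_{i\in\Phi}$ is a solution with positive integer entries. Since $V_{[(A,\Phi,\unlhd)]}$ is nonsingular (Remark \ref{rem:matrix}), the solution is unique, which simultaneously proves (2). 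Part (3) is then immediate: by Theorem \ref{thm:algo1} the matrix $V_{[(A,\Phi,\unlhd)]}$ only depends on $[(A,\Phi,\unlhd)]$ through the composition factors of standards/costandards and $\dim\Hom{A}{\Delta_i}{\Delta_j}$, so the unique solution $V_{[(A,\Phi,\unlhd)]}^{-1}(\dim L_i)_{i\in\Phi}$ depends only on that data together with $(\dim L_i)_{i\in\Phi}$.

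For part (4), with $B$ a regular exact Borel subalgebra of $A$, write $A\cong R_{\mathfrak{B}}$ via Theorem \ref{thm:bkk} and apply parts (1) and (2) of Theorem \ref{thm:algo1}: $[\Rest(L_i^{R_{\mathfrak{B}}}):L_j^B]=v_{ij}$ and hence $\ell(\Rest(L_i))=\sum_j v_{ij}=l_i$ by \eqref{eq:formulali} (equivalently Corollary \ref{cor:algo1}); one should check that the restriction functor $\MOD{A}\to\MOD{B}$ matches, under the isomorphism $A\cong R_{\mathfrak{B}}$, the restriction along $\iota_{\mathfrak{B}}$, which is exactly how Theorem \ref{thm:bkk} sets up the correspondence. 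Finally, for part (5): by Corollary \ref{cor:mainprimitive}(2), $A$ contains a basic regular exact Borel subalgebra if and only if $A\cong R_{[(A,\Phi,\unlhd)]}=\End{A}{\bigoplus_i P_i^{l_i}}\op$, i.e. if and only if the multiplicities $l_i$ coincide with the multiplicities $\dim L_i$ appearing in $A\cong\End{A}{\bigoplus_i P_i^{\dim L_i}}\op$; since both algebras are Morita equivalent to $A$, they are isomorphic precisely when these multiplicity vectors agree, that is, when $\dim L_i=l_i$ for all $i$. Alternatively this follows by specialising part (1) to the solution $x=(1,\dots,1)$ and using that $B$ basic means $\dim L_i^B=1$ for all $i$, combined with Corollary \ref{cor:mainprimitive}(2) for uniqueness.

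The main obstacle, such as it is, is purely organisational: making sure that ``equivalent quasihereditary algebras with the same simple-dimension vector are isomorphic'' is invoked correctly (it rests on Morita equivalence plus the fact that a basic-algebra Morita representative together with a multiplicity vector determines the algebra up to isomorphism), and that the several restriction functors in play (restriction along $\iota_{\mathfrak{B}}$, restriction along an isomorphism $A\cong R_{\mathfrak{B}}$, and the abstract restriction $\Rest$ of the adjoint triple) are all identified compatibly, so that the numerical identities transported from Theorem \ref{thm:algo1} genuinely describe $\Rest\colon\MOD{A}\to\MOD{B}$. None of this requires new ideas beyond what is already assembled in Theorems \ref{thm:algo1}, \ref{thm:main}, \ref{thm:bkk} and Corollaries \ref{cor:algo1}, \ref{cor:mainprimitive}.
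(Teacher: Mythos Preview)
Your proposal is correct and follows essentially the same route as the paper: both derive parts (1)--(3) from Theorem \ref{thm:main} (using part (2) to produce $R$ and part (3) to match $\dim L_i^R=\dim L_i$, hence $R\cong A$), obtain (4) by identifying $B\hookrightarrow A$ with $\iota_{\mathfrak{B}}:B\hookrightarrow R_{\mathfrak{B}}$ via Theorem \ref{thm:bkk} and invoking Theorem \ref{thm:algo1} and Corollary \ref{cor:algo1}, and deduce (5) from Corollary \ref{cor:mainprimitive} (the paper also mentions the alternative via parts (1) and (2) that you sketch). Your closing remarks about compatibly identifying the restriction functors are exactly the bookkeeping the paper leaves implicit.
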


\begin{proof}
We start by proving the assertions (1), (2) and (3) in the statement of the theorem. Suppose first that $(A,\Phi,\unlhd)$ contains a regular exact Borel subalgebra $B$. Part (3) of Theorem \ref{thm:main} implies the existence of positive integers $k_i=\dim L_i^B$, $i\in\Phi$, satisfying \eqref{eq:mostimpthm}. That is, the vector $k=(k_i)_{i\in\Phi}$ is the unique solution of the nonsingular linear system $V_{[(A,\Phi,\lhd)]}x=(\dim L_i)_{i\in\Phi}$. This proves one of the implications in part (1) and also part (2). Part (3) follows from the fact that the entries $v_{ij}$ of the matrix $V_{[(A,\Phi,\lhd)]}$ only depend on the composition factors of the standard and costandard $A$-modules and on the dimension of the $\HOM$-spaces between standard modules (recall Theorem \ref{thm:algo1}). To prove the remaining implication in part (1), assume that $(A,\Phi,\unlhd)$ is such that the linear system $V_{[(A,\Phi,\lhd)]}x=(\dim L_i)_{i\in\Phi}$ has a solution $k=(k_i)_{i\in\Phi}$ satisfying $k_i\in \N$ for every $i\in \Phi$. By part (2) of Theorem \ref{thm:main}, there exists a quasihereditary algebra $(R,\Phi,\unlhd)\in[(A,\Phi,\unlhd)]$ which contains a regular exact Borel subalgebra $B$ satisfying $\dim L_i^B = k_i$ for every $i \in \Phi$. According to part (3) of Theorem \ref{thm:main}, $(R,\Phi,\unlhd)$ is equivalent and isomorphic to $(\End{A}{\bigoplus_{i\in \Phi}P_i^{m_i}}\op,\Phi,\unlhd)$ with
\[m_i=\sum\limits_{\substack{j \in \Phi\setminus \Phi_{i^-} \\ j\unlhd i}}v_{ij}\dim L_i^B=\sum\limits_{\substack{j \in \Phi\setminus \Phi_{i^-} \\ j\unlhd i}}v_{ij}k_i=\dim L_i,\]
where the last equality follows from $k$ being a solution of $V_{[(A,\Phi,\lhd)]}x=(\dim L_i)_{i\in\Phi}$. Hence $R$ is isomorphic to $A$ and $A$ contains a regular exact Borel subalgebra.

In order to prove assertion (4), note that the inclusion of a regular exact Borel subalgebra $B$ into a quasihereditary algebra $(A,\Phi,\unlhd)$ can be regarded as the inclusion of $B$ into the right algebra $R_{\mathfrak{B}}$ of a regular directed bocs $\mathfrak{B}=(B,W,\mu, \varepsilon)$, by Theorem \ref{thm:bkk}. Hence, assertion (4) follows from part (1) of Theorem \ref{thm:algo1} and Corollary \ref{cor:algo1}.

Part (5) follows from Corollary \ref{cor:mainprimitive} or, alternatively, from parts (1) and (2).
\end{proof}
\begin{ex}
	\label{ex:cannotwait}
	Let $(A,\underline{4},\leq)$ be the (basic) quasihereditary algebra discussed in Example \ref{ex:first} and notice the matrix $V_{[(A,\underline{4},\leq)]}$ computed in there. The unique solution of the linear system $V_{[(A,\underline{4},\leq)]}x=(1)_{i\in \underline{4}}$ is $(k_i)_{i\in \underline{4}}$ with $k_1=k_2=k_4=1$ and $k_3=0$. By Theorem \ref{thm:mainsuper}, $(A,\underline{4},\leq)$ has no regular exact Borel subalgebra. 
	\end{ex}
\begin{ex}
	Recall the conclusions in Example \ref{ex:identity}. Part (1) of Theorem \ref{thm:mainsuper} consequently implies that every quasihereditary algebra with an indexing poset of height at most $2$ has a regular exact Borel subalgebra. In particular, every quasihereditary algebra with at most two nonisomorphic simple modules has an exact Borel subalgebra: this had already been proved in \cite[Theorem $4.58$]{kulshammer2016bocs}.  
\end{ex}

The previous results prompt us to reserve a special name for the quasihereditary algebras that contain a regular exact Borel subalgebra.

\begin{defi}A quasihereditary algebra $(A,\Phi,\unlhd)$ is \emph{good} if it contains a regular exact Borel subalgebra. A good quasihereditary algebra is \emph{minimal} if it contains a basic regular exact Borel subalgebra.
\end{defi}
	
Theorem \ref{thm:mainsuper} gives a criterion to single out the good quasihereditary algebras and Theorem \ref{thm:main} describes all the good algebras belonging to a fixed equivalence class $[(A,\Phi,\unlhd)]$. According to Corollary \ref{cor:mainprimitive}, there exists essentially one minimal good quasihereditary per equivalence class: the algebra $(R_{[(A,\Phi,\unlhd)]}, \Phi,\unlhd)$ is the minimal good representative of $[(A,\Phi,\unlhd)]$.

We conclude this subsection with an application of Corollary \ref{cor:mainprimitive}.

\begin{ex}
Let $(A,\underline{4},\leq)$ be the (basic) quasihereditary algebra discussed in Example \ref{ex:first}. We have checked in Example \ref{ex:cannotwait} that $(A,\underline{4},\leq)$ contains no regular exact Borel subalgebra. Recall that $l_1=1$, $l_2=1$, $l_3=2$ and $l_4=1$. According to Corollary \ref{cor:mainprimitive}, the algebra $(R_{[(A,\underline{4},\leq)]}, \underline{4},\leq)$ with $R_{[(A,\underline{4},\leq)]}=\End{A}{P_1\oplus P_2\oplus P_3^2\oplus P_4}\op$ is a minimal good quasihereditary algebra. To be precise, up to isomorphism of algebras, $R_{[(A,\underline{4},\leq)]}$ is the unique quasihereditary algebra in $[(A,\underline{4},\leq)]$ that contains a basic regular exact Borel subalgebra, say $B$. Since $B$ is regular (recall Definiton \ref{defi:borelprops}), then
\[\Ext{B}{1}{L_i^B}{L_j^B} \cong \Ext{R_{[(A,\underline{4},\leq)]}}{1}{\Delta_i^{R_{[(A,\underline{4},\leq)]}}}{\Delta_j^{R_{[(A,\underline{4},\leq)]}}} \cong \Ext{A}{1}{\Delta_i}{\Delta_j}.\]

Using this information, it is not difficult to conclude that $B$ must be isomorphic to the path algebra of the quiver
\[
\begin{tikzcd}
\overset{1}{\circ} \ar[r, "a"]\ar[rr, "b"' ,bend right] & \overset{2}{\circ} & \overset{3}{\circ}   \ar[r, "c"]  & \overset{4}{\circ}
\end{tikzcd}.
\]
The path $cb$ is nonzero in $B$ because there exists an indecomposable $A$-module which is filtered by $\Delta_1$, $\Delta_3$ and $\Delta_4$. 

Call to mind the structure of the projective indecomposable $A$-modules, described in Example \ref{ex:first}. The elements of $R_{[(A,\underline{4},\leq)]}=\End{A}{P_1\oplus P_2\oplus P_3^2\oplus P_4}\op$ can be regarded as $5\times 5$ matrices whose entries are homomorphisms between projective indecomposable $A$-modules. Consider the assignment 
\begin{gather*}
e_1\longmapsto\begin{pmatrix}
1_{P_1} & 0 & 0 & 0 & 0 \\
0 & 0 & 0 & 0 & 0 \\
0 & 0 & 0 & 0 & 0 \\
0 & 0 & 0 & 1_{P_3} & 0 \\
0 & 0 & 0 & 0 & 0 
\end{pmatrix},\quad e_2\longmapsto
\begin{pmatrix}
0 & 0 & 0 & 0 & 0 \\
0 & 1_{P_2} & 0 & 0 & 0 \\
0 & 0 & 0 & 0 & 0 \\
0 & 0 & 0 & 0 & 0 \\
0 & 0 & 0 & 0 & 0 
\end{pmatrix},\\ e_3\longmapsto\begin{pmatrix}
0 & 0 & 0 & 0 & 0 \\
0 & 0 & 0 & 0 & 0 \\
0 & 0 & 1_{P_3} & 0 & 0 \\
0 & 0 & 0 & 0 & 0 \\
0 & 0 & 0 & 0 & 0 
\end{pmatrix},\quad e_4 \longmapsto\begin{pmatrix}
0 & 0 & 0 & 0 & 0 \\
0 & 0 & 0 & 0 & 0 \\
0 & 0 & 0 & 0 & 0 \\
0 & 0 & 0 & 0 & 0 \\
0 & 0 & 0 & 0 & 1_{P_4}
\end{pmatrix},
\end{gather*}\begin{gather*}a\longmapsto
\begin{pmatrix}
0 & f & 0 & 0 & 0 \\
0 & 0 & 0 & 0 & 0 \\
0 & 0 & 0 & 0 & 0 \\
0 & g & 0 & 0 & 0 \\
0 & 0 & 0 & 0 & 0 
\end{pmatrix}
 ,\quad b\longmapsto
\begin{pmatrix}
0 & 0 & 0 & 0 & 0 \\
0 & 0 & 0 & 0 & 0 \\
0 & 0 & 0 & 0 & 0 \\
0 & 0 & 1_{P_3} & 0 & 0 \\
0 & 0 & 0 & 0 & 0
\end{pmatrix},\\
c\longmapsto
\begin{pmatrix}
0 & 0 & 0 & 0 & 0 \\
0 & 0 & 0 & 0 & 0 \\
0 & 0 & 0 & 0 & h \\
0 & 0 & 0 & 0 & 0 \\
0 & 0 & 0 & 0 & 0 
\end{pmatrix},\end{gather*}
where the maps $f\in\Hom{A}{P_2}{P_1}$, $g\in\Hom{A}{P_2}{P_3}$ and $h\in\Hom{A}{P_4}{P_3}$ embed the simple projectives $P_2=L_2$ and $P_4=L_4$ into the socle of $P_1$ and $P_3$, respectively. One can check that this correspondence gives rise to an injective homomorphism of algebras $\iota:B \hookrightarrow R_{[(A,\underline{4},\leq)]}$, where the multiplication in $R_{[(A,\underline{4},\leq)]}$ is identified with the operation opposite to the usual matrix multiplication. The embedding $\iota$ turns $B$ into a (basic) regular exact Borel subalgebra of $(R_{[(A,\underline{4},\leq)]},\underline{4},\leq)$.
\end{ex}
\subsection{More on regular exact Borel subalgebras and regular directed bocses}

We shall now see that the Cartan matrix of a regular exact Borel subalgebra of any quasihereditary algebra in $[(A,\Phi,\unlhd)]$ only depends on the equivalence class $[(A,\Phi, \unlhd)]$, namely on the composition factors of the standard and costandard $A$-modules and on the dimension of the $\HOM$-spaces between standard modules. 

The \emph{Cartan matrix} $C_{[A]}$ of an algebra $A$ whose simple modules are indexed by a set $\Phi$ is given by $([P_i:L_j])_{i,j\in\Phi}$. Note that $C_{[A]}$ is Morita invariant and observe that $C_{[A\op]}=(C_{[A]})^T$. 

If $A$ is quasihereditary with respect to a poset $(\Phi,\unlhd)$, define the \emph{$\Delta$-decomposition matrix} and the \emph{$\nabla$-decomposition matrix}, respectively, as
\[
D_{[(A,\Phi, \unlhd)]}^{\Delta}=\left(\left[\Delta_i:L_j \right]  \right)_{i,j\in\Phi},\quad D_{[(A,\Phi, \unlhd)]}^{\nabla}=\left(\left[\nabla_i:L_j \right]  \right)_{i,j\in\Phi}.
\]
By considering a refinement of $(\Phi,\unlhd)$ to a total order, $D_{[(A,\Phi, \unlhd)]}^{\Delta}$ and $D_{[(A,\Phi, \unlhd)]}^{\nabla}$ can be seen as lower triangular matrices with ones in the diagonal, so they are clearly invertible. Consider also the \emph{$\Delta$-} and the \emph{$\nabla$-filtration matrices} 
\[
F_{[(A,\Phi, \unlhd)]}^{\Delta}=\left(\left(P_i:\Delta_j \right)  \right)_{i,j\in\Phi},\quad F_{[(A,\Phi, \unlhd)]}^{\nabla}=\left(\left(Q_i:\nabla_j \right)  \right)_{i,j\in\Phi},
\]
and note that $F_{[(A,\Phi, \unlhd)]}^{\Delta}=(D_{[(A,\Phi, \unlhd)]}^{\nabla})^T$ and $F_{[(A,\Phi, \unlhd)]}^{\nabla}=(D_{[(A,\Phi, \unlhd)]}^{\Delta})^T$ (recall Remark \ref{rem:bgg}). The decomposition and filtration matrices are invariant under equivalence of quasihereditary algebras. Notice that $C_{[B]}=F_{[(B,\Phi, \unlhd)]}^{\Delta}$ and $C_{[B\op]}=D_{[(B,\Phi, \unlhd)]}^{\nabla}$ for any quasihereditary algebra $(B,\Phi,\unlhd)$ with simple standard modules. 

In the following theorem, we use the notation $\ell_{\Delta}(X)$ for the number of standard modules appearing in a $\Delta$-filtration of $X$ in $\mathcal{F}(\Delta)$.

\begin{thm}
\label{thm:mainthm2}
Let $(A,\Phi,\unlhd)$ be a quasihereditary algebra. Consider the corresponding matrix $V_{[(A,\Phi,\unlhd)]}=(v_{ij})_{i,j \in \Phi}$ and the sequence $(l_i)_{i\in \Phi}$ described, respectively, in Theorem \ref{thm:algo1} and Remark \ref{rem:matrix}, and in Corollary \ref{cor:algo1}. Assume that $B$ is a regular exact Borel subalgebra of some quasihereditary algebra equivalent to $(A,\Phi,\unlhd)$ or suppose (equivalently) that $\mathfrak{B}=(B,W,\mu, \varepsilon)$ is a regular directed bocs whose right algebra is equivalent to $(A,\Phi,\unlhd)$. The simples over $B$ may be naturally indexed by the poset $(\Phi,\unlhd)$ so that the following holds:
\begin{enumerate}
\item $[Q_i^B:L_j^B]=[P_j^B:L_i^B]=\sum\limits_{k \in \Phi}[\nabla_i:L_k]v_{kj}=\sum\limits_{\substack{k \in \Phi \\ j\unlhd k \unlhd i}}[\nabla_i:L_k]v_{kj}$;
\item $\ell \left( Q_i^B\right) =\sum\limits_{\substack{j \in \Phi \\ j\unlhd i}}[\nabla_i:L_j]\ell_j$ and $\ell \left( P_i^B\right) =\sum\limits_{\substack{k \in \Phi  \\ i \unlhd k}}\ell_{\Delta}(P_k)v_{ki}$;
\item $C_{[B\op]}=D_{[(B,\Phi, \unlhd)]}^{\nabla}=D_{[(A,\Phi, \unlhd)]}^{\nabla}V_{[(A,\Phi, \unlhd)]}$;
\item $C_{[B]}=F_{[(B,\Phi, \unlhd)]}^{\Delta}=(V_{[(A,\Phi, \unlhd)]})^T F_{[(A,\Phi, \unlhd)]}^{\Delta}$;
\item $(\dim Q_i^B)_{i\in\Phi}=D_{[(A,\Phi, \unlhd)]}^{\nabla}V_{[(A,\Phi, \unlhd)]}(\dim L_i^B)_{i\in\Phi}$;
\item $(\dim P_i^B)_{i\in\Phi}=(V_{[(A,\Phi, \unlhd)]})^T F_{[(A,\Phi, \unlhd)]}^{\Delta}(\dim L_i^B)_{i\in\Phi}$;
\item $\dim B=\sum\limits_{i\in\Phi} \dim Q_i^B \dim L_i^B=\sum\limits_{i\in\Phi} \dim P_i^B \dim L_i^B$;
\item $\dim W=\dim B + \sum\limits_{\substack{i,j\in\Phi\\ j\lhd i}}\dfrac{\dim \Big(\Hom{R_{\mathfrak{B}}}{\Delta_j^{R_{\mathfrak{B}}}}{\Delta_i^{R_{\mathfrak{B}}}}\Big)}{\dim L_j^B}\dim P_i^B \dim Q_j^B$.
\end{enumerate}
In particular, the invariants in (1) to (4) are completely determined by the composition factors of the standard and costandard $A$-modules and by the dimension of the $\HOM$-spaces between standard modules, so they only depend on $[(A,\Phi,\unlhd)]$. The remaining invariants depend additionally on the dimension of the simple $B$-modules.
\end{thm}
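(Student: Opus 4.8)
The plan is to reduce everything to the identity $[\Rest(L_i^{R_{\mathfrak{B}}}):L_j^B] = v_{ij}$ from Theorem \ref{thm:algo1}, combined with Koenig's result $\Rest(\nabla_i^{R_{\mathfrak{B}}}) \cong \nabla_i^B = Q_i^B$ (Theorem \ref{thm:koenigborel}) and standard homological bookkeeping for quasihereditary algebras. By Theorem \ref{thm:bkk}, assuming $B$ is a regular exact Borel subalgebra of a quasihereditary algebra equivalent to $(A,\Phi,\unlhd)$ is the same as assuming $B$ underlies a regular directed bocs $\mathfrak{B}$ with $R_{\mathfrak{B}}$ equivalent to $(A,\Phi,\unlhd)$; since the quantities in (1)--(7) are Morita invariant (Cartan and decomposition matrices) or intrinsic to $B$, I may work with this fixed $R_{\mathfrak{B}}$ throughout, identifying $[\nabla_i^{R_{\mathfrak{B}}}:L_j^{R_{\mathfrak{B}}}]$ with $[\nabla_i^A:L_j^A]$ and $v_{ij}$ with the invariant of the equivalence class.

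First I would establish (1). Applying the exact functor $\Rest$ to a $\nabla$-filtration of $Q_i^B = \Rest(\nabla_i^{R_{\mathfrak{B}}})$ is not needed; instead, write in the Grothendieck group $[\Rest(\nabla_i^{R_{\mathfrak{B}}})] = \sum_k [\nabla_i^{R_{\mathfrak{B}}}:L_k^{R_{\mathfrak{B}}}]\,[\Rest(L_k^{R_{\mathfrak{B}}})] = \sum_{k,j} [\nabla_i:L_k] v_{kj}\,[L_j^B]$, which gives $[Q_i^B:L_j^B] = \sum_k [\nabla_i:L_k] v_{kj}$; the range of the sum restricts to $j \unlhd k \unlhd i$ because $v_{kj}=0$ unless $j \unlhd k$ (Theorem \ref{thm:algo1}) and $[\nabla_i:L_k]=0$ unless $k \unlhd i$. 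The equality $[Q_i^B:L_j^B] = [P_j^B:L_i^B]$ is the general identity $[Q_i:L_j] = \dim\Hom{B}{P_j}{Q_i} = [P_j:L_i]$ valid over any algebra. This proves (1), and summing over $j$ (resp.\ reindexing) yields the two formulas in (2): $\ell(Q_i^B) = \sum_j [Q_i^B:L_j^B] = \sum_j \sum_k [\nabla_i:L_k]v_{kj} = \sum_k [\nabla_i:L_k] l_k$ using $l_k = \sum_j v_{kj}$ (Corollary \ref{cor:algo1}), and dually for $\ell(P_i^B)$ using $\ell_\Delta(P_k) = (P_k:\Delta_k\text{'s}) $, i.e.\ the $\Delta$-filtration multiplicities, together with Bernstein--Gelfand--Gelfand reciprocity (Remark \ref{rem:bgg}) which gives $(P_k:\Delta_m) = [\nabla_m:L_k]$.

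Then (3)--(6) are just the matrix reformulations. Statement (1) in matrix form reads $([Q_i^B:L_j^B])_{i,j} = D^\nabla_{[(A,\Phi,\unlhd)]} V_{[(A,\Phi,\unlhd)]}$; since $B$ has simple standard modules, $C_{[B\op]} = D^\nabla_{[(B,\Phi,\unlhd)]} = ([Q_i^B:L_j^B])_{i,j}$ by the remark preceding the theorem, giving (3). Transposing and using $C_{[B]} = (C_{[B\op]})^T$ and $F^\Delta_{[(A,\Phi,\unlhd)]} = (D^\nabla_{[(A,\Phi,\unlhd)]})^T$ gives (4). Multiplying (3) and (4) on the right by the column vector $(\dim L_i^B)_{i\in\Phi}$ and using $\dim Q_i^B = \sum_j [Q_i^B:L_j^B]\dim L_j^B$ (and similarly for $P_i^B$) gives (5) and (6). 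Statement (7) is the standard formula $\dim B = \sum_i \dim P_i^B \dim L_i^B = \sum_i \dim Q_i^B \dim L_i^B$ for any finite-dimensional algebra.

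The one genuinely new computation is (8), the dimension of the bimodule $W$. From the exact sequence $0 \to \Ker\varepsilon \to W \to B \to 0$ of $B$-$B$-bimodules, $\dim W = \dim B + \dim \Ker\varepsilon$. By Lemma \ref{lem:julianvanessa}(4), $\Ker\varepsilon \cong \bigoplus_{i,j}(Be_i \otimes_K e_j B)^{n_{ij}}$ with $n_{ij} = \dim\Hom{R_{\mathfrak{B}}}{\Delta_j^{R_{\mathfrak{B}}}}{\Rad\Delta_i^{R_{\mathfrak{B}}}}/\dim L_j^B$, and $\dim(Be_i \otimes_K e_j B) = \dim P_i^B \cdot \dim P_j^{B\op} = \dim P_i^B \cdot \dim Q_j^B$ (the projective right $B$-module $e_j B$ has dimension equal to that of the injective left $B$-module $Q_j^B$, again by the $[P:L]=[Q:L]$ symmetry). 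Since directedness forces $n_{ij}=0$ unless $j \lhd i$, and $\dim\Hom{R_{\mathfrak{B}}}{\Delta_j^{R_{\mathfrak{B}}}}{\Rad\Delta_i^{R_{\mathfrak{B}}}} = \dim\Hom{R_{\mathfrak{B}}}{\Delta_j^{R_{\mathfrak{B}}}}{\Delta_i^{R_{\mathfrak{B}}}}$ for $j \lhd i$ (the quotient $\Delta_i^{R_{\mathfrak{B}}}/\Rad\Delta_i^{R_{\mathfrak{B}}} = L_i^{R_{\mathfrak{B}}}$ admits no nonzero map from $\Delta_j^{R_{\mathfrak{B}}}$ when $j \neq i$, which is exactly the argument already used in the proof of Proposition \ref{prop:extra}), we obtain $\dim\Ker\varepsilon = \sum_{j \lhd i} \dfrac{\dim\Hom{R_{\mathfrak{B}}}{\Delta_j^{R_{\mathfrak{B}}}}{\Delta_i^{R_{\mathfrak{B}}}}}{\dim L_j^B}\dim P_i^B \dim Q_j^B$, which is (8). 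The final sentence of the theorem is then immediate: the matrices in (1)--(4) are built from $V_{[(A,\Phi,\unlhd)]}$ and $D^\nabla_{[(A,\Phi,\unlhd)]}$, both determined by the composition factors of standards and costandards and by $\dim\Hom{A}{\Delta_i}{\Delta_j}$ (Theorem \ref{thm:algo1}), while (5)--(8) additionally require the $\dim L_i^B$. The main obstacle is purely organisational --- keeping the op-conventions and the left/right module dimension identities straight in (8) --- rather than any deep point; everything else is a transcription of Theorem \ref{thm:algo1} into Grothendieck-group and matrix language.
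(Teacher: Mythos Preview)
Your proof is correct and arrives at the same conclusions, but your derivation of part~(1) differs from the paper's. You obtain $[Q_i^B:L_j^B]$ by writing $[\Rest(\nabla_i^{R_{\mathfrak{B}}})]=\sum_k [\nabla_i:L_k]\,[\Rest(L_k^{R_{\mathfrak{B}}})]$ in the Grothendieck group and invoking Theorem~\ref{thm:algo1}(1) (or equivalently~(3)) together with Koenig's identification $\Rest(\nabla_i^{R_{\mathfrak{B}}})\cong Q_i^B$. The paper instead uses the \emph{other} adjunction in the triple: it observes that the coinduction functor $\Hom{B}{R_{\mathfrak{B}}}{-}$ preserves injectives, computes $\Hom{B}{R_{\mathfrak{B}}}{Q_j^B}\cong\bigoplus_k (Q_k^{R_{\mathfrak{B}}})^{v_{kj}}$ by adjunction and Theorem~\ref{thm:algo1}, and then evaluates $\dim\Hom{B}{Q_i^B}{Q_j^B}$ via Koenig's theorem and adjunction again. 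Your route is shorter and stays entirely within the Grothendieck-group machinery already set up; the paper's route has the minor conceptual payoff of explicitly identifying the coinduced injective $\Hom{B}{R_{\mathfrak{B}}}{Q_j^B}$ as an $R_{\mathfrak{B}}$-module, but this is not used elsewhere. For parts~(2)--(8) your argument matches the paper's essentially line for line, including the use of Lemma~\ref{lem:julianvanessa}(4) and the splitting $W\cong B\oplus\Ker\varepsilon$ for~(8).
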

\begin{proof}
According to Theorem \ref{thm:bkk}, we may restrict ourselves to the setting where $\mathfrak{B}=(B,W,\mu, \varepsilon)$ is a regular directed bocs whose right algebra is equivalent to $(A,\Phi,\unlhd)$. In this case, $B$ is a (regular) exact Borel subalgebra of $R_{\mathfrak{B}}$. Recall the adjoint triple in \eqref{eq:adointtriple}, namely that $\Rest$ is left adjoint to $\Hom{B}{R_{\mathfrak{B}}}{-}$. Since $\Rest$ is an exact functor, $\Hom{B}{R_{\mathfrak{B}}}{-}$ preserves injective objects (\cite[Proposition $2.3.10$]{wiebel}). Consequently, $\Hom{B}{R_{\mathfrak{B}}}{Q_j^B}$ is an injective $R_{\mathfrak{B}}$-module. Note that
\begin{align*}
\dim\left(\Hom{R_{\mathfrak{B}}}{L_k^{R_{\mathfrak{B}}}}{\Hom{B}{R_{\mathfrak{B}}}{Q_j^B}}\right)&=\dim\left(\Hom{B}{\Rest(L_k^{R_{\mathfrak{B}}})}{Q_j^B}\right)\\
&=[\Rest(L_k^{R_{\mathfrak{B}}}):L_j^B]=v_{kj},
\end{align*}
by Theorem \ref{thm:algo1}. Hence
\[
\Hom{B}{R_{\mathfrak{B}}}{Q_j^B}\cong \bigoplus_{k\in \Phi} (Q_k^{R_{\mathfrak{B}}})^{v_{kj}}.\]
Using Theorem \ref{thm:koenigborel}, we get
\[
\Hom{B}{Q_i^B}{Q_j^B}\cong\Hom{B}{\Rest(\nabla_i^{R_{\mathfrak{B}}})}{Q_j^B}\cong\Hom{R_{\mathfrak{B}}}{\nabla_i^{R_{\mathfrak{B}}}}{\Hom{B}{R_{\mathfrak{B}}}{Q_j^B}}.\]
It follows that
\begin{align*}
\dim\left(\Hom{B}{Q_i^B}{Q_j^B}\right)&=\dim\left(\Hom{R_{\mathfrak{B}}}{\nabla_i^{R_{\mathfrak{B}}}}{\bigoplus_{k\in \Phi} (Q_k^{R_{\mathfrak{B}}})^{v_{kj}}}\right)\\
&=\sum_{k\in \Phi}[\nabla_i^{R_{\mathfrak{B}}}:L_k^{R_{\mathfrak{B}}}] v_{kj}=\sum_{\substack{k \in \Phi \\ j\unlhd k \unlhd i}}[\nabla_i:L_k] v_{kj}.
\end{align*}
Now note that 
\[[Q_i^B:L_j^B]=\dim\big(\Hom{B}{Q_i^B}{Q_j^B}\big)=\dim\big(\Hom{B}{P_j^B}{Q_i^B}\big)=[P_j^B:L_i^B],\]
and this concludes the proof of the identities in (1). The first equality in (2) follows from (1) and from \eqref{eq:formulali}. Analogously, the second identity in (2) results from (1) and from the fact that $[\nabla_j:L_k]=(P_k:\Delta_j)$, as pointed out in Remark \ref{rem:bgg}.  The equality in (3) is merely a rephrasing of (1). Note that (4) is obtained from (3) by transposition. The equalities in (5) and (6) follow from (3) and (4), respectively, and from the definition of Cartan matrix. The identity in (7) holds for any algebra over an algebraically closed field. Finally, the equality in (8) follows from the decompositions $W\cong B\oplus \Ker{\varepsilon}$ in $\MOD{B}$ and $\Ker{\varepsilon}\cong\bigoplus_{i,j \in \Phi}(B e_i \otimes_K e_j B)^{n_{ij}}$ in $\MOD{B\otimes_K B\op}$, where
\[
n_{ij}=\frac{\dim \Big(\Hom{R_{\mathfrak{B}}}{\Delta_j^{R_{\mathfrak{B}}}}{\Rad{\Delta_i^{R_{\mathfrak{B}}}}}\Big)}{\dim L_j^B},
\]
as deduced in Lemma \ref{lem:julianvanessa}.
\end{proof}

The following example demonstrates how the results deduced so far can be used to perform concrete computations.
\begin{ex}
\label{ex:second}
Let $n\in\N$ and consider the algebra $A=KQ/I$, with
\[
Q=\begin{tikzcd}[ampersand replacement=\&]
\overset{1}{\circ} \arrow[bend left]{r}{\alpha_1}\& \overset{2}{\circ}\arrow[bend left]{l}{\beta_1} \arrow[bend left]{r}{\alpha_2}\&  \cdots\arrow[bend left]{l}{\beta_2} \arrow[bend left]{r}{\alpha_{n-2}}  \& \overset{n-1}{\circ} \arrow[bend left]{l}{\beta_{n-2}} \arrow[bend left]{r}{\alpha_{n-1}} \&
\overset{n}{\circ}\arrow[bend left]{l}{\beta_{n-1}}
\end{tikzcd}
\]
and $I$ the admissible ideal of $KQ$ generated by the relations $\alpha_{n-1}\beta_{n-1}$ and by $\alpha_{i+1}\alpha_i$, $\beta_i \beta_{i+1}$ and $\alpha_i \beta_{i} - \beta_{i+1}\alpha_{i+1}$, for $i=1 , \ldots, n-2$. The algebra $A$ is quasihereditary with respect to the natural ordering on $\underline{n}=\{1,\ldots, n\}$. In \cite{MR1206201}, Erdmann proved that the finite representation type blocks of a Schur algebra are always Morita equivalent to $A$ for particular instances of $n$.

The projective indecomposable $A$-modules may be represented in the following way
\[
P_1=\begin{tikzcd}[ampersand replacement=\&, row sep =tiny, column sep = tiny]
\circled{1} \arrow[dash]{d} \\
2 \arrow[dash]{d} \\
1
\end{tikzcd}, \quad
P_i=\begin{tikzcd}[ampersand replacement=\&, row sep =tiny, column sep = tiny]
\& \circled{$i$} \arrow[dash]{dr} \arrow[dash]{dl} \& \\
\circled{$i-1$} \arrow[dash]{dr}\& \& i+1 \arrow[dash]{dl} \\
\& i  \&
\end{tikzcd}, \,\, i=2, \ldots, n-1,\quad
P_n=\begin{tikzcd}[ampersand replacement=\&, row sep =tiny, column sep = tiny]
\circled{$n$} \arrow[dash]{d} \\
\circled{$n-1$} 
\end{tikzcd},
\]
where the standard modules are marked by rectangles. Using \eqref{eq:recursivevector} and the Bernstein--Gelfand--Gelfand Reciprocity Law (Remark \ref{rem:bgg}), we get
\[
v_i={\epsilon}_i+\sum_{\substack{1\leq k\leq i-2 \\ k\leq j\leq i-1}} (P_k:\Delta_j)\dim\big(\Hom{A}{\Delta_j}{\Delta_i}\big)v_k-\sum_{1\leq j\leq i-2}[\Delta_i:L_j]v_j.
\]
Note that most of the summands in the expression above are zero. In fact, $v_1={\epsilon}_1$, $v_2={\epsilon}_2$ and $v_{i}={\epsilon}_i +v_{i-2}$ for $2 < i \leq n$. As a result,
\[v_{i}=\sum_{\substack{1 \leq j\leq i\\j\equiv i(\modu 2)}} {\epsilon}_j.\]
Hence $v_{ij}=1$ if $1\leq j\leq i$ and $j\equiv i(\modu 2)$, and $v_{ij}=0$ otherwise. If $\mathfrak{B}=(B,W,\mu, \varepsilon)$ is a regular directed bocs whose right algebra is equivalent to $(A,\underline{n},\leq)$, then Theorem \ref{thm:algo1} implies that the $B$-module $\Rest\big(L_i^{R_\mathfrak{B}}\big)$ has composition factors $L_i^B$, $L_{i-2}^B$, $L_{i-4}^B$, and so on. 

Using \eqref{eq:formulali}, we obtain $l_i=\lceil\frac{i}{2}\rceil$, so the minimal good representative of $[(A,\underline{n},\leq)]$ is
\[
R_{[(A,\underline{n},\leq)]}=\End{A}{\bigoplus_{i=1}^n P_i^{\lceil\frac{i}{2}\rceil}}\op,
\]
that is, up to isomorphism of algebras, this is the unique quasihereditary algebra equivalent to $(A,\underline{n},\leq)$ that contains a basic regular exact Borel subalgebra (recall Corollary \ref{cor:mainprimitive}). 

Suppose now that $B$ is a regular exact Borel subalgebra of some algebra equivalent to $(A,\underline{n},\leq)$. By Theorem \ref{thm:mainthm2}, $Q_1^B\cong L_1^B$ and 
\[[Q_i^B:L_j^B]=[P_j^B:L_i^B]=(P_i:\Delta_i)v_{ij}+(P_{i-1}:\Delta_{i})v_{(i-1)j}=v_{ij}+v_{(i-1)j},\]
for $1<i\leq n$ and $j\in \underline{n}$. Consequently,  $[Q_i^B:L_j^B]=[P_j^B:L_i^B]=1$ for every $i,j\in \underline{n}$ with $j\leq i$. The Cartan matrix of $B$ is therefore upper triangular with all the entries on the diagonal and above equal to one. Moreover, $\ell (Q_i^B)=i$ and $\ell(P_i^B)=n-i+1$ for every $i\in\underline{n}$.

If, in addition, $B$ is basic, then $\dim Q_i^B=i$, $\dim P_i^B=n-i+1$ and 
\[\dim B=\sum_{i=1}^n i=\dfrac{n(n+1)}{2},\]
which agrees with the computations in \cite[§5.1]{kulshammer2016bocs}, based on the work in \cite{klamt2011ainfinity,KLAMT2012323}. In this case,
\begin{align*}\dim W&=\dim B+\sum_{\substack{i,j\in\underline{n}\\ j< i}}\dim\big(\Hom{A}{\Delta_j}{\Delta_i}\big)\dim P_i^B \dim Q_j^B\\
&=\dfrac{n(n+1)}{2}+\sum_{i=2}^n \dim P_i^B \dim Q_{i-1}^B=\dfrac{n(n+1)}{2}+\sum_{i=2}^n (n-i+1)(i-1)\\
&=\dfrac{n(n+1)}{2}+\dfrac{(n-1)n(n+1)}{6}=\dfrac{n(n+1)(n+2)}{6}.
\end{align*}
\end{ex}
We shall now deduce two further consequences of the previous results.
 
\begin{cor}
\label{cor:maincor21}
Let $(A,\Phi,\unlhd)$ be a quasihereditary algebra containing a regular exact Borel subalgebra $B$. Then the dimensions $\dim L_i^B$, $\dim P_i^B$, $\dim Q_i^B$ and $\dim B$ are completely and univocally determined by the composition factors of the standard and costandard $A$-modules, by the dimension of the $\HOM$-spaces between standard $A$-modules and by the dimension of the simple $A$-modules.
\end{cor}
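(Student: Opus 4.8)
The plan is to obtain this as a bookkeeping consequence of Theorems \ref{thm:mainsuper} and \ref{thm:mainthm2}. First I would invoke part (2) of Theorem \ref{thm:mainsuper}: since $(A,\Phi,\unlhd)$ has a regular exact Borel subalgebra $B$, the vector $(\dim L_i^B)_{i\in\Phi}$ is the unique solution of the nonsingular linear system $V_{[(A,\Phi,\unlhd)]}x=(\dim L_i^A)_{i\in\Phi}$. By Theorem \ref{thm:algo1}, the matrix $V_{[(A,\Phi,\unlhd)]}$ is assembled purely from the multiplicities $[\Delta_i:L_j]$, $[\nabla_i:L_j]$ and the dimensions $\dim(\Hom{A}{\Delta_i}{\Delta_j})$, so the numbers $\dim L_i^B$ are themselves univocally determined by the composition factors of the standard and costandard $A$-modules, by the dimensions of the $\HOM$-spaces between standard $A$-modules, and by the dimensions $\dim L_i^A$. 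This already settles the claim for the simple $B$-modules.

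Next I would feed the vector $(\dim L_i^B)_{i\in\Phi}$ into Theorem \ref{thm:mainthm2}. Parts (5) and (6) there give $(\dim Q_i^B)_{i\in\Phi}=D_{[(A,\Phi,\unlhd)]}^{\nabla}V_{[(A,\Phi,\unlhd)]}(\dim L_i^B)_{i\in\Phi}$ and $(\dim P_i^B)_{i\in\Phi}=(V_{[(A,\Phi,\unlhd)]})^{T}F_{[(A,\Phi,\unlhd)]}^{\Delta}(\dim L_i^B)_{i\in\Phi}$. Both $D_{[(A,\Phi,\unlhd)]}^{\nabla}$ and $F_{[(A,\Phi,\unlhd)]}^{\Delta}=(D_{[(A,\Phi,\unlhd)]}^{\nabla})^{T}$ are determined by the composition factors of the costandard $A$-modules alone, and $V_{[(A,\Phi,\unlhd)]}$ by the data already mentioned, so combining with the previous paragraph shows that $\dim P_i^B$ and $\dim Q_i^B$ depend only on the asserted invariants. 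Finally, part (7) of Theorem \ref{thm:mainthm2} reads $\dim B=\sum_{i\in\Phi}\dim Q_i^B\,\dim L_i^B=\sum_{i\in\Phi}\dim P_i^B\,\dim L_i^B$, so $\dim B$ inherits the same dependence.

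I expect no genuine obstacle: the content of the corollary is entirely contained in the formulas of Theorems \ref{thm:mainsuper} and \ref{thm:mainthm2}, and the only point deserving a word of care is that all the auxiliary matrices $V_{[(A,\Phi,\unlhd)]}$, $D_{[(A,\Phi,\unlhd)]}^{\nabla}$ and $F_{[(A,\Phi,\unlhd)]}^{\Delta}$ are invariants of the equivalence class $[(A,\Phi,\unlhd)]$ computed from precisely the listed data; this has already been established, so the conclusion follows by substitution.
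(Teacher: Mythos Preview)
Your proposal is correct and follows essentially the same approach as the paper, which simply writes ``Combine part (3) of Theorem \ref{thm:mainsuper} and Theorem \ref{thm:mainthm2}.'' The only cosmetic difference is that you invoke part (2) of Theorem \ref{thm:mainsuper} and then argue that $V_{[(A,\Phi,\unlhd)]}$ depends only on the listed data, whereas the paper cites part (3) directly, which packages exactly that observation.
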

\begin{proof}
	Combine part (3) of Theorem \ref{thm:mainsuper} and Theorem \ref{thm:mainthm2}.
\end{proof}

A one-to-one correspondence between the isomorphism classes of minimal regular directed bocses and the equivalence classes of quasihereditary algebras, sending a bocs to the equivalence class of its right algebra, shall be provided in upcoming work of K\"ulshammer and Miemietz (see \cite[Theorem 4.26]{kulshammer2016bocs}; we refer to \cite[§17.12]{brzezinski2003corings} for the definition of isomorphism of bocses). We have not been able to show that any two minimal regular directed bocses whose corresponding right algebras are equivalent as quasihereditary algebras must be isomorphic as bocses. However, we prove that any two such bocses have a lot of data in common.
\begin{cor}
\label{cor:maincor22}
Let $\mathfrak{B}=(B,W,\mu, \varepsilon)$ and $\mathfrak{B}'=(B',W',\mu', \varepsilon')$ be two regular directed bocses whose corresponding right algebras are equivalent. It is possible to label the simples over $B$ and $B'$ by the elements of same set $\Phi$ in a way which is compatible with the equivalence of $R_{\mathfrak{B}}$ and $R_{\mathfrak{B}'}$. For such a labelling, the Cartan matrices of $B$ and $B'$ coincide,  and consequently $\ell(P_i^B)=\ell(P_i^{B'})$ and $\ell(Q_i^B)=\ell(Q_i^{B'})$ for every $i\in\Phi$. Moreover, the following conditions are equivalent:
\begin{enumerate}
\item $\dim L_i^B=\dim L_i^{B'}$ for every $i \in \Phi$;
\item $\dim L_i^{R_{\mathfrak{B}}}=\dim L_i^{R_{\mathfrak{B'}}}$ for every $i \in \Phi$;
\item $\dim P_i^B=\dim P_i^{B'}$ for every $i \in \Phi$;
\item $\dim Q_i^B=\dim Q_i^{B'}$ for every $i \in \Phi$;
\item $\dim P_i^{R_{\mathfrak{B}}}=\dim P_i^{R_{\mathfrak{B}'}}$ for every $i \in \Phi$;
\item $\dim Q_i^{R_{\mathfrak{B}}}=\dim Q_i^{R_{\mathfrak{B}'}}$ for every $i \in \Phi$;
\item $\dim \Delta_i^{R_{\mathfrak{B}}}=\dim \Delta_i^{R_{\mathfrak{B}'}}$ for every $i \in \Phi$;
\item $\dim \nabla_i^{R_{\mathfrak{B}}}=\dim \nabla_i^{R_{\mathfrak{B}'}}$ for every $i \in \Phi$.
\end{enumerate}
 If one of the conditions (1) to (8) is satisfied, then $R_{\mathfrak{B}}$ and $R_{\mathfrak{B}'}$ are isomorphic algebras, $\dim B=\dim B'$ and $\dim W=\dim W'$. In particular, if $\mathfrak{B}$ and $\mathfrak{B'}$ are both minimal, then conditions (1) to (8) are satisfied, $R_{\mathfrak{B}}\cong R_{\mathfrak{B}'}$, $\dim B=\dim B'$ and $\dim W=\dim W$.
\end{cor}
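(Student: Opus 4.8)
The plan is to prove the three clusters of assertions — the coincidence of Cartan matrices, the eightfold equivalence, and the final consequences — largely by invoking Theorem \ref{thm:mainthm2} repeatedly. First I would set up the common labelling. By Theorem \ref{thm:bkk} (or equivalently Theorem \ref{thm:kkorephrased} together with the paragraph fixing labels), each of $R_{\mathfrak{B}}$ and $R_{\mathfrak{B}'}$ is a quasihereditary algebra equipped with a labelling of its simples such that the fixed equivalence $\mathcal{F}(\Delta^{R_{\mathfrak{B}}})\rightarrow\mathcal{F}(\Delta^{R_{\mathfrak{B}'}})$ matches $\Delta_i^{R_{\mathfrak{B}}}$ with $\Delta_i^{R_{\mathfrak{B}'}}$, and the exact Borel subalgebras $B$ and $B'$ inherit this indexing set $\Phi$ via $L_i^B\mapsto\Delta_i^{R_{\mathfrak{B}}}$ under induction. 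Since equivalent quasihereditary algebras share all the numerical data entering Theorem \ref{thm:algo1} (composition factors of standards and costandards, and $\dim\Hom$ between standards), the matrix $V_{[(A,\Phi,\unlhd)]}$ is the same for $R_{\mathfrak{B}}$ and $R_{\mathfrak{B}'}$; likewise the $\nabla$-decomposition matrix $D^{\nabla}$ and the $\Delta$-filtration matrix $F^{\Delta}$ coincide. By part (4) (or (3)) of Theorem \ref{thm:mainthm2}, $C_{[B]}=(V_{[(A,\Phi,\unlhd)]})^T F^{\Delta}_{[(A,\Phi,\unlhd)]}=C_{[B']}$, which immediately gives $\ell(P_i^B)=\sum_j [P_i^B:L_j^B]=\sum_j[P_i^{B'}:L_j^{B'}]=\ell(P_i^{B'})$ for every $i$, and symmetrically $\ell(Q_i^B)=\ell(Q_i^{B'})$ from $C_{[B\op]}=D^{\nabla}V_{[(A,\Phi,\unlhd)]}$.

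Next I would prove the equivalence of (1)--(8) by exhibiting a cycle of implications, all of which are linear-algebraic consequences of the shared matrices. The implication (1)$\Rightarrow$(2) is part (1) of Theorem \ref{thm:mainthm2} (more precisely, $\dim L_i^{R_{\mathfrak{B}}}=m_i=\sum_j v_{ij}\dim L_j^B$ from Proposition \ref{prop:main0}, so equal $(\dim L_i^B)$ forces equal $(\dim L_i^{R_{\mathfrak{B}}})$); conversely (2)$\Rightarrow$(1) because $V_{[(A,\Phi,\unlhd)]}$ is invertible, so $(\dim L_i^B)_{i}=V_{[(A,\Phi,\unlhd)]}^{-1}(\dim L_i^{R_{\mathfrak{B}}})_i$ is determined by the common right-hand side. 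The implications (1)$\Leftrightarrow$(3), (1)$\Leftrightarrow$(4), (1)$\Leftrightarrow$(5), (1)$\Leftrightarrow$(6) follow the same pattern: parts (5) and (6) of Theorem \ref{thm:mainthm2} express $(\dim P_i^B)$ and $(\dim Q_i^B)$ as the images of $(\dim L_i^B)$ under the invertible matrices $(V_{[(A,\Phi,\unlhd)]})^T F^{\Delta}$ and $D^{\nabla}V_{[(A,\Phi,\unlhd)]}$ respectively, and the analogous statement for $R_{\mathfrak{B}}$ (projectives and injectives of $R_{\mathfrak{B}}$) uses the decomposition and filtration matrices $D^{\nabla}$, $F^{\Delta}$ of $R_{\mathfrak{B}}$, which are again invariants of the equivalence class. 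For (7) and (8) I would use $\dim\Delta_i^{R_{\mathfrak{B}}}=\sum_j[\Delta_i:L_j]\dim L_j^{R_{\mathfrak{B}}}$ and $\dim\nabla_i^{R_{\mathfrak{B}}}=\sum_j[\nabla_i:L_j]\dim L_j^{R_{\mathfrak{B}}}$, with $D^{\Delta}$ and $D^{\nabla}$ invertible and equal for both algebras; so (2)$\Leftrightarrow$(7) and (2)$\Leftrightarrow$(8). Assembling these two-way implications closes the cycle.

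Finally, assume one (hence all) of (1)--(8) holds. From (2), $\dim L_i^{R_{\mathfrak{B}}}=\dim L_i^{R_{\mathfrak{B}'}}$; combined with the fact that $R_{\mathfrak{B}}$ and $R_{\mathfrak{B}'}$ are Morita equivalent (they are equivalent quasihereditary algebras), one concludes $R_{\mathfrak{B}}\cong R_{\mathfrak{B}'}$ by the same argument used in Proposition \ref{prop:main0}(2): a Morita equivalence matching simples with matching dimensions is an isomorphism, so both are isomorphic to $\End{R_{\mathfrak{B}}}{\bigoplus_i (P_i^{R_{\mathfrak{B}}})^{\dim L_i^{R_{\mathfrak{B}}}}}\op$. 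Then $\dim B=\sum_i\dim Q_i^B\dim L_i^B=\sum_i\dim Q_i^{B'}\dim L_i^{B'}=\dim B'$ by Theorem \ref{thm:mainthm2}(7) and conditions (1),(4). For $\dim W=\dim W'$, invoke Theorem \ref{thm:mainthm2}(8): the extra term is a sum over $j\lhd i$ of $\dim\Hom{R_{\mathfrak{B}}}{\Delta_j^{R_{\mathfrak{B}}}}{\Delta_i^{R_{\mathfrak{B}}}}/\dim L_j^B$ times $\dim P_i^B\dim Q_j^B$; the $\Hom$-dimensions between standards are equivalence invariants, the $\dim L_j^B$, $\dim P_i^B$, $\dim Q_j^B$ all agree by (1),(3),(4), and $\dim B=\dim B'$, so $\dim W=\dim W'$. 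For the last sentence, if $\mathfrak{B}$ and $\mathfrak{B}'$ are both minimal then $B$ and $B'$ are basic, so $\dim L_i^B=1=\dim L_i^{B'}$ for all $i$, i.e.\ (1) holds, and all the stated conclusions follow. The only mild subtlety — and the place I would be most careful — is bookkeeping: making sure that the labelling of simples over $R_{\mathfrak{B}}$, over $R_{\mathfrak{B}'}$, over $B$ and over $B'$ is genuinely a single common indexing set $\Phi$ compatible with the chosen equivalence, so that all the matrix identities can be read off Theorem \ref{thm:mainthm2} simultaneously for both bocses; once that is pinned down, everything else is routine linear algebra with invertible matrices.
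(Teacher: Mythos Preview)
Your proposal is correct and follows essentially the same approach as the paper: both argue that the Cartan matrix of $B$ is given by the equivalence-invariant formula in Theorem~\ref{thm:mainthm2}, and then establish the equivalence of (1)--(8) by observing that each dimension vector is the image of another under an invertible matrix that depends only on $[(A,\Phi,\unlhd)]$. The paper organises the cycle slightly differently (it proves $(1)\Leftrightarrow(2)$ via Theorem~\ref{thm:mainsuper}, then $(1)\Leftrightarrow(3)\Leftrightarrow(4)$ via $C_{[B]}=C_{[B']}$, then $(2)\Leftrightarrow(7)\Leftrightarrow(8)$ via $D^{\Delta}$ and $D^{\nabla}$, and finally $(5)\Leftrightarrow(7)$ and $(6)\Leftrightarrow(8)$ via $F^{\Delta}$ and $F^{\nabla}$), whereas you pivot everything through (1); but the content is the same, and the final paragraph (isomorphism of the right algebras via Proposition~\ref{prop:main0}/Theorem~\ref{thm:main}(3), and equality of $\dim B$, $\dim W$ via parts (7),(8) of Theorem~\ref{thm:mainthm2}) matches the paper's argument exactly.
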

\begin{proof}
	Let $\mathfrak{B}=(B,W,\mu, \varepsilon)$ and $\mathfrak{B}'=(B',W',\mu', \varepsilon')$ be two regular directed bocses whose right algebras are equivalent. Index the simples over $B$ and $B'$ by a poset $(\Phi,\unlhd)$ in a way which is compatible with the equivalence of $R_{\mathfrak{B}}$ and $R_{\mathfrak{B}'}$. That is, the labelling should be such that $(R_{\mathfrak{B}},\Phi,\unlhd)$ and $(R_{\mathfrak{B}'},\Phi,\unlhd)$ are equivalent via an equivalence of categories $\mathcal{F}(\Delta^{R_{\mathfrak{B}}})\rightarrow \mathcal{F}(\Delta^{R_{\mathfrak{B}'}})$ that maps $\Delta_{i}^{R_{\mathfrak{B}}}$ to $\Delta_{i}^{R_{\mathfrak{B}'}}$. 
	
	By Theorem \ref{thm:mainthm2}, the Cartan matrices of $B$ and $B'$ must coincide and the same holds for the lengths of the projective and injective indecomposables over $B$ and $B'$. This proves the initial part of the corollary.
	
	We now show that the statements (1) to (8) are equivalent. Since $V_{[(R_{\mathfrak{B}},\Phi,\unlhd)]}$ equals $V_{[(R_{\mathfrak{B}'},\Phi,\unlhd)]}$, Theorem \ref{thm:mainsuper} implies that (1) and (2) are equivalent. The equivalence of the statements (1), (3) and (4) follows from the fact that the (invertible) matrices $C_{[B]}$ and $C_{[B']}$ (and $C_{[B\op]}$ and $C_{[B'\op]}$) coincide. Similarly, the conditions (2), (7) and (8) are equivalent because $D_{[(R_{\mathfrak{B}},\Phi,\unlhd)]}^\Delta=D_{[(R_{\mathfrak{B}'},\Phi,\unlhd)]}^\Delta$ and $D_{[(R_{\mathfrak{B}},\Phi,\unlhd)]}^\nabla=D_{[(R_{\mathfrak{B}'},\Phi,\unlhd)]}^\nabla$ and these matrices are invertible. Since $F_{[(R_{\mathfrak{B}},\Phi,\unlhd)]}^\Delta$ and $F_{[(R_{\mathfrak{B}'},\Phi,\unlhd)]}^\Delta$ coincide, then the assertions (5) and (7) must also be equivalent. In an analogous way, using the identity $F_{[(R_{\mathfrak{B}},\Phi,\unlhd)]}^\nabla=F_{[(R_{\mathfrak{B}'},\Phi,\unlhd)]}^\nabla$, we conclude that (6) and (8) are equivalent statements.
	
	 Finally, if condition (2) is met, it follows from part (3) of Theorem \ref{thm:main} that the algebras $R_{\mathfrak{B}}$ and $R_{\mathfrak{B}'}$ are isomorphic. By Theorem \ref{thm:mainthm2}, it is clear that the identities $\dim B=\dim B'$ and $\dim W=\dim W$ hold whenever one of the assertions (1) to (8) is satisfied. 
\end{proof}
\begin{rem}
In the light of the result announced in \cite[Theorem 4.26]{kulshammer2016bocs}, one should expect that any two regular directed bocses $\mathfrak{B}=(B,W,\mu, \varepsilon)$ and $\mathfrak{B}'=(B',W',\mu', \varepsilon')$ whose right algebras are equivalent and for which one of the conditions (1) to (8) in Corollary \ref{cor:maincor22} holds must be isomorphic as bocses.
\end{rem}

We close this section with an application of Theorem \ref{thm:mainthm2} to two nonequivalent quasihereditary algebras having ``identical data". 
\begin{ex}
\label{ex:third}
Fix $n\in\N$. Consider the bound quiver algebras $A=KQ/J$ and $A'=KQ/J'$, where $Q$ is the quiver with $n$ vertices in Example \ref{ex:second} and the ideals $J$ and $J'$ are defined by
\begin{gather*}
J=\langle \alpha_{n-1} \beta_{n-1},\, \alpha_i \beta_{i} - \beta_{i+1}\alpha_{i+1}\text{ for } i=1 , \ldots, n-2\rangle,\\
J'=\langle \alpha_i \beta_i \text{ for } i=1 , \ldots, n-1\rangle.
\end{gather*}

Both algebras are quasihereditary with respect to the natural order on the set $\underline{n}=\{1,\ldots , n\}$. The algebra $A$ is the Auslander algebra of $K[x]/\langle x^n \rangle$ and $A'$ is the dual extension algebra of the linearly oriented quiver of type $\mathbb{A}_n$, in the sense of \cite[§$1.3$]{MR1285702}. 

The projective indecomposable $A$-module with top $L_1$ is given by
\begin{equation}
\label{eq:projectivep1}
\begin{tikzcd}[column sep=tiny, row sep=tiny]
\phantom{...}1\phantom{...} \arrow[d, no head]                             &    \phantom{n-1}                                              &                   \phantom{n-1}                                 &                         \phantom{n-1}                         &   \phantom{n-1}\\
2 \arrow[d, no head] \arrow[rd, no head]         &                                                  &                                                    &                                                  &   \\
3 \arrow[d, no head] \arrow[rd, no head]         & 1  \arrow[d, Green,no head]                             &                                                    &                                                  &   \\
4 \arrow[d, no head, dotted] \arrow[rd, no head] & 2 \arrow[d, Green,no head] \arrow[rd, no head]         &                                                    &                                                  &   \\
\circled{$n$} \arrow[rd, no head]                            & 3 \arrow[rd, no head] \arrow[d, Green,no head, dotted] & 1 \arrow[d ,Green, no head]                               &                                                  &   \\
                                                 & \circled{$n-1$} \arrow[rd, no head]                          & 2                                                  &                                                  &   \\
                                                 &                                                  & \circled{$n-2$} \arrow[rd, no head] \arrow[u, Green,no head, dotted] & 1 \arrow[d, Green,no head, dotted] \arrow[lu, no head] &   \\
                                                 &                                                  &                                                    & \circled{$n-3$} \arrow[rd, no head, dotted]                  &   \\
                                                 &                                                  &                                                    &                                                  & \circled{$1$}
\end{tikzcd}.
\end{equation}
The remaining projective indecomposable $A$-modules are submodules of $P_1$. To be precise, $P_i$ can be identified with the largest submodule of $P_1$ with top $L_i$. The projective $A'$-module with simple top labelled by $1$ is similar to \eqref{eq:projectivep1}, but having the green edges removed. As before, the rest of the projective indecomposable $A'$-modules are submodules of the projective with top labelled by $1$. In both cases, the standard modules are uniserial and have the same composition series. The module $\Delta_n$ is marked by rectangles in \eqref{eq:projectivep1} and $\Delta_i\cong \RAD{n-i}{ \Delta_n}$ for every $i \in \underline{n}$. The multiplicities $[\Delta_i:L_j]$ and $[\nabla_i:L_j]=(P_j:\Delta_i)$ are the same for $A$ and $A'$ and the dimension of the $\HOM$-spaces between standard modules also coincides for both algebras. 

Suppose that $\mathfrak{B}$ and $\mathfrak{B}'$ are minimal regular directed bocses whose right algebras are equivalent to $(A,\underline{n},\leq)$ and to $(A',\underline{n},\leq)$, respectively. It follows from Theorem \ref{thm:mainthm2} that the bocses $\mathfrak{B}$ and $\mathfrak{B}'$ have a lot of data in common, but they cannot be isomorphic since $(A,\underline{n},\leq)$ and $(A',\underline{n},\leq)$ are not equivalent.

Suppose that $B$ is a regular exact Borel subalgebra of some quasihereditary algebra $(R,\underline{n},\leq)$ which is either equivalent to $(A,\underline{n},\leq)$ or to $(A',\underline{n},\leq)$ and let $\Rest:\MOD{R}\rightarrow\MOD{B}$ be the associated restriction functor. Using \eqref{eq:recursivevector}, we get $v_1={\epsilon}_1$, $v_2={\epsilon}_2$ and
\begin{align*}
v_i&={\epsilon}_i+\sum_{\substack{1\leq k\leq i-2 \\ k\leq j\leq i-1}} (P_k:\Delta_j)\dim\big(\Hom{A}{\Delta_j}{\Delta_i}\big)v_k-\sum_{1\leq j\leq i-2}[\Delta_i:L_j]v_j \\
&={\epsilon}_i+\sum_{1\leq k\leq i-2} (i-k)v_k-\sum_{1\leq j\leq i-2} v_j={\epsilon}_i+\sum_{1\leq k\leq i-2} (i-k-1)v_k,
\end{align*}
for $2<i\leq n$. It then follows that
\[v_i=\sum_{k=1}^{i-2}2^{i-k-2}{\epsilon}_k+{\epsilon}_i.\]
According to Theorem \ref{thm:algo1} and Corollary \ref{cor:algo1}, we have
\[
[\Rest(L_i^R):L_j^B]=v_{ij}=\begin{cases} 2^{i-j-2} &\mbox{if } j\leq i-2 \\ 
1 & \mbox{if } j=i \\
0& \mbox{otherwise} \end{cases}, \quad \ell(\Rest(L_i^R))=l_i=\sum_{j=0}^{i-3}2^j+1.
\]
So $l_i=1$ if $i=1$ and $l_i=2^{i-2}$ if $1<i\leq n$, and the minimal good representative of $[(A,\underline{n},\leq)]$ (resp.~of $[(A',\underline{n},\leq)]$) is isomorphic to the opposite of the endomorphism algebra of $P_1\oplus (\bigoplus_{i=2}^n {P_i}^{2^{i-2}})$ over $A$ (resp.~over $A'$). 

By Theorem \ref{thm:mainthm2}, the composition factors of the projective indecomposable modules over the regular exact Borel subalgebra $B$ of $(R,\underline{n},\leq)$ are given by
\[
[P_j^B:L_i^B]=\sum_{j\leq k\leq i}(P_k:\Delta_i) v_{kj}=\sum_{j\leq k\leq i} v_{kj}=1 + \sum_{k=0}^{i-j-2} 2^k=\begin{cases} 2^{i-j-1} &\mbox{if }i>j \\ 
1 & \mbox{if } i=j \\
0& \mbox{otherwise} \end{cases}.
\]
Assuming additionally that $B$ is basic, we obtain
\[
\dim P_j^B=\ell\left(P_j^B \right) =\sum_{i=j+1}^n 2^{i-j-1}+1=\sum_{i=0}^{n-j-1} 2^{i}+1=2^{n-j}
\]
and $\dim B=\sum_{j=1}^n  2^{n-j}=2^n-1$.
\end{ex}
\section{Basic quasihereditary algebras with a regular exact Borel subalgebra}
\label{sec:last}
The results in Section \ref{sec:goodrepresentative} shall now be applied to characterise the basic quasihereditary algebras that admit a regular exact Borel subalgebra. Before doing so, we recall the notion of right minimal approximation and state an alternative description of the category of $\nabla$-filtered modules over a quasihereditary algebra. 

A map $f:X \rightarrow Y$ in $\Mod{A}$ is called a \emph{right minimal morphism}\index{right minimal morphism} if every endomorphism $g: X \rightarrow X$ satisfying $f=f \circ g$ is an automorphism.  See \cite[Section 1]{prepro} or \cite[Proposition $1.1$]{ausrei} for the key properties of minimal morphisms. Note, in particular, that nonzero morphisms with indecomposable domain are always right minimal. 

Let now $\mathcal{X}$ be a class of modules in $\Mod{A}$. A morphism $f: X \rightarrow Y$ in $\Mod{A}$, with $X$ in $\mathcal{X}$, is said to be a \emph{right $\mathcal{X}$-approximation} of $Y$ if the map 
\[\Hom{A}{X'}{f}:\Hom{A}{X'}{X}\longrightarrow  \Hom{A}{X'}{Y}\]
is surjective for all $X'$ in $\mathcal{X}$. Finally, say that a map is a \emph{right minimal $\mathcal{X}$-approximation} if it is both a right $\mathcal{X}$-approximation and a right minimal morphism.

In order to prove the next result, the following description of the category of $\nabla$-filtered modules over a quasihereditary algebra $(A,\Phi,\unlhd)$ is necessary:
\begin{align}
\label{eq:fdelta1}\mathcal{F}(\nabla)&=\{Y \in \Mod{A}\mid \Ext{A}{1}{\Delta_i}{Y}=0,\,\forall i \in \Phi\}\\
\label{eq:fdelta2}&=\{Y \in \Mod{A}\mid \Ext{A}{n}{X}{Y}=0,\, \forall X\in\mathcal{F}(\Delta), \forall n\geq 1\}.
\end{align}
For a proof of the identities above, we refer to \cite[Theorem 1]{MR1211481}.

\begin{prop}
\label{prop:almostlast}
Let $(A,\Phi,\unlhd)$ be a quasihereditary algebra and consider the corresponding families of integers $(v_{ij})_{i,j \in \Phi}$ and $(l_i)_{i\in \Phi}$ described, respectively, in Theorem \ref{thm:algo1} and in Corollary \ref{cor:algo1}. Fix $i\in \Phi$. The following conditions are equivalent:
\begin{enumerate}
\item $l_i=1$;
\item $v_{ij}=\delta_{ij}$ for every $j\in \Phi$;
\item $\Delta_i$ is a right  $\mathcal{F}(\Delta)$-approximation of the simple $A$-module $L_i$;
\item $\Rad{\Delta_i}$ belongs to $\mathcal{F}(\nabla)$;
\item the map $\Ext{A}{1}{X}{\pi_i}:\Ext{A}{1}{X}{\Delta_i}\rightarrow\Ext{A}{1}{X}{L_i}$, where $\pi_i$ denotes the epic from $\Delta_i$ to $L_i$, is an isomorphism for every $X$ in $\mathcal{F}(\Delta)$;
\item the map $\Ext{A}{1}{\Delta_j}{\pi_i}:\Ext{A}{1}{\Delta_j}{\Delta_i}\rightarrow\Ext{A}{1}{\Delta_j}{L_i}$, where $\pi_i$ denotes the epic from $\Delta_i$ to $L_i$, is an isomorphism for every $j\lhd i$. 
\end{enumerate}
\end{prop}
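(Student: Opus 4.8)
The plan is to organise the six equivalences around the recursive formula \eqref{eq:multiplicities} for $l_i$ and around the homological characterisations of $\mathcal{F}(\nabla)$ in \eqref{eq:fdelta1} and \eqref{eq:fdelta2}. First, I would note that $(1)\Leftrightarrow(2)$ is immediate from \eqref{eq:formulali}: since $v_{ii}=1$ and all $v_{ij}$ are nonnegative integers, the sum $l_i=\sum_{j}v_{ij}$ equals $1$ if and only if $v_{ij}=0$ for all $j\neq i$. Next, I would prove $(4)\Leftrightarrow(5)\Leftrightarrow(6)$ using the short exact sequence $0\to\Rad{\Delta_i}\to\Delta_i\xrightarrow{\pi_i}L_i\to 0$. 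Applying $\Hom{A}{X}{-}$ for $X\in\mathcal{F}(\Delta)$ and using that $\Ext{A}{1}{X}{\Delta_i}\to\Ext{A}{1}{X}{L_i}$ fits into a long exact sequence with $\Ext{A}{n}{X}{\Rad{\Delta_i}}$ on either side, the map $\Ext{A}{1}{X}{\pi_i}$ is an isomorphism for all $X\in\mathcal{F}(\Delta)$ precisely when $\Ext{A}{1}{X}{\Rad{\Delta_i}}=\Ext{A}{2}{X}{\Rad{\Delta_i}}=0$ for all such $X$ (the surjectivity on $\Ext{A}{1}{X}{\Delta_i}$, which always holds since $\Ext{A}{1}{X}{L_i}\to\Ext{A}{2}{X}{\Rad{\Delta_i}}$, and injectivity needing $\Ext{A}{1}{X}{\Rad{\Delta_i}}=0$ together with exactness at the relevant spot). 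Combined with \eqref{eq:fdelta2}, this is exactly $\Rad{\Delta_i}\in\mathcal{F}(\nabla)$, giving $(4)\Leftrightarrow(5)$. For $(5)\Leftrightarrow(6)$, one direction is trivial (specialise $X=\Delta_j$); the converse uses that any $X\in\mathcal{F}(\Delta)$ is built from the $\Delta_j$ by extensions, so the isomorphism statement propagates along a $\Delta$-filtration via the five lemma, and $(6)$ only needs to be assumed for $j\lhd i$ because $\Hom{A}{\Delta_j}{\Rad{\Delta_i}}=\Ext{A}{1}{\Delta_j}{\Rad{\Delta_i}}=0$ automatically when $j\ntriangleleft i$ (composition factors of $\Rad{\Delta_i}$ are $L_k$ with $k\lhd i$, and if additionally $j\not\lhd i$ then any nonzero map $\Delta_j\to\Rad{\Delta_i}$ or extension forces an impossible order relation, using that the top of $\Rad{\Delta_i}$ consists of $L_k$ with $k\lhd i$ and standard properties).

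Then I would connect $(4)$ to $(3)$: the statement that $\pi_i:\Delta_i\twoheadrightarrow L_i$ is an $\mathcal{F}(\Delta)$-approximation of $L_i$ means $\Hom{A}{X}{\pi_i}$ is surjective for all $X\in\mathcal{F}(\Delta)$, which by the same long exact sequence is equivalent to $\Ext{A}{1}{X}{\Rad{\Delta_i}}=0$ for all $X\in\mathcal{F}(\Delta)$, i.e. (by \eqref{eq:fdelta1}, since the higher vanishing in \eqref{eq:fdelta2} follows from the $\Ext{A}{1}{\Delta_j}{-}=0$ condition) to $\Rad{\Delta_i}\in\mathcal{F}(\nabla)$. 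So $(3)\Leftrightarrow(4)$. Actually here one must be slightly careful: $\mathcal{F}(\Delta)$-approximation in the sense defined requires surjectivity of $\Hom{A}{X'}{f}$ only onto $\Hom{A}{X'}{L_i}$, and since $\Hom{A}{X'}{\Delta_i}\to\Hom{A}{X'}{L_i}$ has cokernel injecting into $\Ext{A}{1}{X'}{\Rad{\Delta_i}}$, this is again governed by the vanishing of $\Ext{A}{1}{\Delta_j}{\Rad{\Delta_i}}$ for all $j$, hence by \eqref{eq:fdelta1} equivalent to $\Rad{\Delta_i}\in\mathcal{F}(\nabla)$.

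The remaining and genuinely arithmetic link is $(2)\Leftrightarrow(4)$ (equivalently $(1)\Leftrightarrow(4)$), and I expect this to be the main obstacle. One direction: if $\Rad{\Delta_i}\in\mathcal{F}(\nabla)$, then by Dlab--Ringel reciprocity (Remark \ref{rem:bgg}) the multiplicities $(\Rad{\Delta_i}:\nabla_k)=\dim\Hom{A}{\Delta_k}{\Rad{\Delta_i}}$ are controlled, and one computes that the correction term $S_2$-type contributions in \eqref{eq:recursivevector}/\eqref{eq:multiplicities} conspire so that $v_i=\epsilon_i$; more concretely, writing $[\Delta_i]=[L_i]+[\Rad{\Delta_i}]$ in the Grothendieck group and using $[\Rad{\Delta_i}]=\sum_k(\Rad{\Delta_i}:\nabla_k)[\nabla_k]$, the recursion collapses because $\Rest$ applied to a $\nabla$-filtered module is itself $\nabla$-filtered over $B$ (Theorem \ref{thm:koenigborel}), so the $\nabla$- and $\Rad{\Delta}$-terms in \eqref{eq:recursivevector} exactly cancel. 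For the converse, I would argue contrapositively: if $\Rad{\Delta_i}\notin\mathcal{F}(\nabla)$ then $\Ext{A}{1}{\Delta_j}{\Rad{\Delta_i}}\neq 0$ for some $j$ (necessarily $j\lhd i$), which via Lemma \ref{lem:lemma2} and Proposition \ref{prop:dimvectors} forces a strictly positive contribution $z_{ij}>$ the cancelling negative term, making $v_{ij}>0$ and hence $l_i>1$. The cleanest route is probably to invoke Theorem \ref{thm:mainthm2}(1) together with the interpretation $v_{ij}=[\Rest(L_i^{R_{\mathfrak{B}}}):L_j^B]$: for a minimal regular directed bocs, $\Rest(L_i^{R_{\mathfrak{B}}})=L_i^B$ for all $i$ exactly when the recursion is trivial, and the obstruction to triviality is measured by $\dim\Hom{R_{\mathfrak{B}}}{\Delta_j^{R_{\mathfrak{B}}}}{\Rad{\Delta_i^{R_{\mathfrak{B}}}}}$, which vanishes for all $j\lhd i$ precisely when $\Rad{\Delta_i^{R_{\mathfrak{B}}}}\in\mathcal{F}(\nabla)$ (using \eqref{eq:fdelta1} and that $\Hom{R_{\mathfrak{B}}}{\Delta_j^{R_{\mathfrak{B}}}}{\Rad{\Delta_i^{R_{\mathfrak{B}}}}}=0$ would still leave $\Ext{R_{\mathfrak{B}}}{1}{\Delta_j^{R_{\mathfrak{B}}}}{\Rad{\Delta_i^{R_{\mathfrak{B}}}}}$ to worry about, but the $\Ext{}{2}{}{}$-vanishing over $\mathcal{F}(\Delta)$ reduces everything to the $\Ext{}{1}{}{}$ condition). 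Since $\mathcal{F}(\nabla)$, the decomposition and $\HOM$-data are equivalence invariants, this transfers back to $A$ and closes the loop.
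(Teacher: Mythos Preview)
Your overall architecture is reasonable, but there are two genuine gaps where the plan does not close the circle of implications.

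\medskip
\textbf{Gap in $(3)\Rightarrow(4)$.} You claim that surjectivity of $\Hom{A}{X}{\pi_i}$ for all $X\in\mathcal{F}(\Delta)$ is ``by the same long exact sequence equivalent to $\Ext{A}{1}{X}{\Rad{\Delta_i}}=0$''. It is not. The long exact sequence only tells you that surjectivity of $\Hom{A}{X}{\pi_i}$ is equivalent to the connecting map $\Hom{A}{X}{L_i}\to\Ext{A}{1}{X}{\Rad{\Delta_i}}$ being zero, i.e.\ to $\Ext{A}{1}{X}{\Rad{\Delta_i}}\hookrightarrow\Ext{A}{1}{X}{\Delta_i}$. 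For $X=\Delta_j$ with $j\neq i$ this injection holds automatically (since $\Hom{A}{\Delta_j}{L_i}=0$), so condition (3) gives no information there; and for $j\lhd i$ the target $\Ext{A}{1}{\Delta_j}{\Delta_i}$ may well be nonzero. The paper fills this by observing that $\pi_i$ is right minimal (its domain is indecomposable) and applying Wakamatsu's Lemma to the extension-closed category $\mathcal{F}(\Delta)$, which yields $\Ext{A}{1}{\mathcal{F}(\Delta)}{\Rad{\Delta_i}}=0$ directly.

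\medskip
\textbf{Gap in $(2)\Rightarrow(4)$.} Your contrapositive sketch does not work: the claim that $\dim\Hom{A}{\Delta_j}{\Rad{\Delta_i}}$ vanishes for all $j\lhd i$ precisely when $\Rad{\Delta_i}\in\mathcal{F}(\nabla)$ is false (for $\Rad{\Delta_i}\in\mathcal{F}(\nabla)$, this dimension equals $(\Rad{\Delta_i}:\nabla_j)$, typically nonzero). The paper avoids $(2)\Rightarrow(4)$ altogether and instead proves $(2)\Rightarrow(3)$ using the bocs machinery: if $v_{ij}=\delta_{ij}$ then $\Rest(L_i^{R_{\mathfrak{B}}})\cong L_i^B$, hence $R_{\mathfrak{B}}\otimes_B\Rest(L_i^{R_{\mathfrak{B}}})\cong\Delta_i^{R_{\mathfrak{B}}}$; by \cite[Theorems 10.4, 11.3]{MR3228437} every object of $\mathcal{F}(\Delta^{R_{\mathfrak{B}}})$ is induced, so the counit $\theta_{L_i^{R_{\mathfrak{B}}}}$ is automatically a right $\mathcal{F}(\Delta)$-approximation. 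Transferring along the equivalence with $(A,\Phi,\unlhd)$ gives (3).

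\medskip
The remaining parts of your plan --- $(1)\Leftrightarrow(2)$ via \eqref{eq:formulali}, $(4)\Rightarrow(5)\Rightarrow(6)\Rightarrow(4)$ via the long exact sequence and case analysis, and $(4)\Rightarrow(1)$ via the recursion \eqref{eq:multiplicities} collapsing under BGG reciprocity --- match the paper's argument.
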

\begin{proof}
$(1)\Rightarrow (2)$: Combine Theorem \ref{thm:algo1} and \eqref{eq:formulali} in Corollary \ref{cor:algo1}.
 
$(2)\Rightarrow(3)$: Suppose that $v_{ij}=\delta_{ij}$ for every $j\in \Phi$. Let $\mathfrak{B}=(B,W,\mu, \varepsilon)$ be some regular directed bocs whose right algebra is equivalent to $(A,\Phi,\unlhd)$: by Thereom \ref{thm:kkorephrased} such a bocs always exist. Theorem \ref{thm:algo1} implies that $\Rest(L_i^{R_{\mathfrak{B}}})\cong L_i^B$. Consequently $R_{\mathfrak{B}}\otimes_B\Rest(L_i^{R_{\mathfrak{B}}})\cong \Delta^{R_{\mathfrak{B}}}_i$, since $B$ is an exact Borel subalgebra of $R_{\mathfrak{B}}$. Denote by $\theta$ the counit of the adjunction $R_{\mathfrak{B}}\otimes_B - \dashv \Rest$. According to Theorems $10.4$ and $11.3$ in \cite{MR3228437}, the category of $\Delta$-filtered $R_{\mathfrak{B}}$-modules coincides with the full subcategory of $\Mod{R_{\mathfrak{B}}}$ of modules isomorphic to $R_{\mathfrak{B}}\otimes_B X$ for $X$ in $\Mod{B}$. The universal property of $\theta$ then guarantees that $\theta_X:R_{\mathfrak{B}}\otimes_B\Rest(X) \rightarrow X$ is a right $\mathcal{F}(\Delta)$-approximation of $X$ for every $X$ in $\Mod{R_{\mathfrak{B}}}$. In particular, $\Delta_i^{R_{\mathfrak{B}}}$ is a right $\mathcal{F}(\Delta)$-approximation of $L_i^{R_{\mathfrak{B}}}$. Since $(A,\Phi,\unlhd)$ is equivalent to $R_{\mathfrak{B}}$, we deduce that $\Delta_i$ is a right $\mathcal{F}(\Delta)$-approximation of the simple $A$-module $L_i$.

$(3)\Rightarrow (4)$: Assume that (3) holds. The epic $\pi_i:\Delta_i \twoheadrightarrow L_i$ is then a right $\mathcal{F}(\Delta)$-approximation. Actually, the map $\pi_i$ is a right minimal $\mathcal{F}(\Delta)$-approximation since the module $\Delta_i$ is indecomposable. Using Wakamatsu's Lemma (\cite[Lemma 1.3]{ausrei}) and the identity \eqref{eq:fdelta1}, we conclude that $\Ker{\pi_i}=\Rad{\Delta_i}$ lies in $\mathcal{F}(\nabla)$.

$(4)\Rightarrow (1)$: Suppose that $\Rad{\Delta_i}\in \mathcal{F}(\nabla)$. We show that $l_i=1$. Recall the formula \eqref{eq:multiplicities} and note that
\begin{align*}
l_i&=1+ \sum_{\substack{j,k \in \Phi \\ k\unlhd j \lhd i}}  l_k  [\nabla_j: L_k] \dim\left( \Hom{A}{\Delta_j}{\Delta_i}\right)  - \sum_{\substack{j\in \Phi \\  j \lhd i}} l_j  [\Delta_i :L_j] \\
&=  1+\sum_{\substack{j,k \in \Phi \\ k\unlhd j \lhd i}}  l_k  [\nabla_j: L_k] \dim\left( \Hom{A}{\Delta_j}{\Rad{\Delta_i}}\right)  - \sum_{\substack{j\in \Phi \\  j \lhd i}} l_j  [\Rad{\Delta_i} :L_j]\\
&=  1+\sum_{\substack{j,k \in \Phi \\ k\unlhd j \lhd i}}  l_k  [\nabla_j: L_k] (\Rad{\Delta_i}:\nabla_j)  - \sum_{\substack{j\in \Phi \\  j \lhd i}} l_j  [\Rad{\Delta_i} :L_j],
\end{align*}
where the last equality follows from Remark \ref{rem:bgg}. Observe now that
\[\sum_{\substack{j \in \Phi \\ k\unlhd j \lhd i}}  [\nabla_j: L_k] (\Rad{\Delta_i}:\nabla_j) =[\Rad{\Delta_i}:L_k].\]
This implies that $l_i=1$.

$(4)\Rightarrow (5)$: Suppose that $\Rad{\Delta_i}$ is $\nabla$-filtered. Let $X$ be in $\mathcal{F}(\Delta)$, apply the functor $\Hom{A}{X}{-}$ to
\begin{equation}
\label{eq:ses}
\begin{tikzcd}[ampersand replacement=\&]
0 \arrow{r} \& \Rad{\Delta_{i}} \arrow{r} \& \Delta_{i} \arrow{r}{\pi_i} \& L_{i} \arrow{r} \& 0
\end{tikzcd}
\end{equation}
and consider the corresponding long exact sequence. Using \eqref{eq:fdelta2}, we get an isomorphism $\Ext{A}{1}{X}{\pi_i}:\Ext{A}{1}{X}{\Delta_i}\rightarrow\Ext{A}{1}{X}{L_i}$.

$(5)\Rightarrow (6)$: This implication is clear.

$(6)\Rightarrow (4)$: Let $j\in\Phi$, apply the functor $\Hom{A}{\Delta_j}{-}$ to the short exact \eqref{eq:ses} and consider the corresponding long exact sequence
\[
\begin{tikzcd}[column sep=normal]
0 \arrow[r]&  \Hom{A}{\Delta_j}{\Rad{\Delta_i}} \arrow[r]
    & \Hom{A}{\Delta_j}{\Delta_i} \arrow[r,"f"]
        \arrow[d, phantom, ""{coordinate, name=Z}]
      & \Hom{A}{\Delta_j}{L_i} \arrow[dll,"d"',rounded corners,to path={ -- ([xshift=2ex]\tikztostart.east)
|- (Z) [near end]\tikztonodes
-| ([xshift=-2ex]\tikztotarget.west) -- (\tikztotarget)}] \\
& \Ext{A}{1}{\Delta_j}{\Rad{\Delta_i}} \arrow[r,"g"]
    & \Ext{A}{1}{\Delta_j}{\Delta_i}\arrow[r,"h"]
&\Ext{A}{1}{\Delta_j}{L_i} \,.\end{tikzcd}
\]
If $j\lhd i$, then $h$ is an isomorphism by hypothesis and $\Hom{A}{\Delta_j}{L_i}=0$, therefore $\Ext{A}{1}{\Delta_j}{\Rad{\Delta_i}} =0$. If $j \not\vartriangleleft i$, then $\Ext{A}{1}{\Delta_j}{\Delta_i}=0$ by condition (3) in the definition of quasihereditary algebra (Definition \ref{defi:qh}). Since $\Hom{A}{\Delta_j}{L_i}=0$ for $j\neq i$, it follows that $\Ext{A}{1}{\Delta_j}{\Rad{\Delta_i}} =0$ whenever $j \not\vartriangleleft i$ and $j\neq i$. Finally, if $j=i$, the map $f$ is an isomorphism and $\Ext{A}{1}{\Delta_i}{\Delta_i}=0$, so $\Ext{A}{1}{\Delta_i}{\Rad{\Delta_i}} =0$. This shows that $\Ext{A}{1}{\Delta_j}{\Rad{\Delta_i}} =0$ for every $j\in \Phi$. According to \eqref{eq:fdelta1}, the module $\Rad{\Delta_i}$ lies in $\mathcal{F}(\nabla)$.
\end{proof}

As a consequence of Proposition \ref{prop:almostlast}, we determine when all quasihereditary algebras in an equivalence class are good and we also obtain a characterisation of the basic quasihereditary algebras that possess a regular exact Borel subalgebra.

\begin{thm}
\label{thm:basicqh}
Let $(A,\Phi,\unlhd)$ be a quasihereditary algebra. Consider the corresponding matrix $V_{[(A,\Phi,\unlhd)]}=(v_{ij})_{i,j \in \Phi}$ and the sequence $(l_i)_{i\in \Phi}$ described, respectively, in Theorem \ref{thm:algo1} and Remark \ref{rem:matrix}, and in Corollary \ref{cor:algo1}. The following conditions are equivalent:
\begin{enumerate}
\item every algebra in $[(A,\Phi,\unlhd)]$ is good (i.e.~every quasihereditary algebra equivalent to $(A,\Phi,\unlhd)$ has a regular exact Borel subalgebra);
	\item the minimal good representative of $[(A,\Phi,\unlhd)]$ is basic;
	\item the sequence $(l_i)_{i\in\Phi}$ is constant and equal to one;
\item $V_{[(A,\Phi,\lhd)]}$ is the identity matrix;
\item for every $i \in \Phi$, $\Delta_i$ is a right $\mathcal{F}(\Delta)$-approximation of the simple $A$-module $L_i$;
\item $\Rad{\Delta_i}$ belongs to $\mathcal{F}(\nabla)$ for every $i \in \Phi$;
\item the map $\Ext{A}{1}{X}{\pi_i}:\Ext{A}{1}{X}{\Delta_i}\rightarrow\Ext{A}{1}{X}{L_i}$, where $\pi_i$ denotes the epic from $\Delta_i$ to $L_i$, is an isomorphism for every $X$ in $\mathcal{F}(\Delta)$ and every $i\in\Phi$;
\item the map $\Ext{A}{1}{\Delta_j}{\pi_i}:\Ext{A}{1}{\Delta_j}{\Delta_i}\rightarrow\Ext{A}{1}{\Delta_j}{L_i}$, where $\pi_i$ denotes the epic from $\Delta_i$ to $L_i$, is an isomorphism for every $i,j\in\Phi$ satisfying $j\lhd i$.
\end{enumerate}
Assuming that $A$ is basic, then the algebra $(A,\Phi,\unlhd)$ contains a regular exact Borel subalgebra if and only if it contains a basic regular exact Borel subalgebra if and only if one of the equivalent conditions (1) to (8) holds.
\end{thm}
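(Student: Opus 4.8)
The plan is to prove Theorem \ref{thm:basicqh} by first establishing the chain of equivalences $(1)\Leftrightarrow\cdots\Leftrightarrow(8)$ and then deducing the final assertion about basic algebras as a corollary. Most of the equivalences among $(3)$ through $(8)$ are already packaged in Proposition \ref{prop:almostlast}: quantifying that proposition over all $i\in\Phi$ immediately gives $(3)\Leftrightarrow(4)\Leftrightarrow(5)\Leftrightarrow(6)\Leftrightarrow(7)\Leftrightarrow(8)$, where I should be slightly careful that condition $(8)$ here is the ``for all $j\lhd i$'' version while the proposition's $(6)$ fixes $i$ — but this is harmless since ranging over all $i$ recovers exactly the global statement. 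So the genuine work is to link the representation-theoretic condition $(1)$ (and the auxiliary $(2)$) to the numerical conditions.

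For $(3)\Leftrightarrow(4)$: this is the purely linear-algebra statement that $l_i=1$ for all $i$ iff $V_{[(A,\Phi,\unlhd)]}$ is the identity. One direction is trivial (identity matrix has row sums $1$); the converse follows because, by Theorem \ref{thm:algo1}, $V_{[(A,\Phi,\unlhd)]}$ can be arranged lower triangular with ones on the diagonal and \emph{nonnegative} integer entries, so each row sum $l_i=\sum_j v_{ij}\geq 1$ with equality forcing all off-diagonal entries in that row to vanish. Next, for $(4)\Leftrightarrow(2)$ with $(2)$ read for all $i$: this is just the definition of $V_{[(A,\Phi,\unlhd)]}$ being the identity matrix. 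The crucial new implications are $(3)\Rightarrow(1)$ and $(1)\Rightarrow(3)$, and also $(1)\Leftrightarrow(2)$ where $(2)$ is the statement ``the minimal good representative is basic''. For $(3)\Rightarrow(1)$: if $l_i=1$ for every $i$, then by Theorem \ref{thm:mainsuper}(1), applied to \emph{any} quasihereditary algebra $(A',\Phi,\unlhd)\in[(A,\Phi,\unlhd)]$, the system $V_{[(A',\Phi,\unlhd)]}x=(\dim L_i^{A'})_{i}$ has the unique solution $x=(\dim L_i^{A'})_i$ (since $V$ is the identity), which consists of positive integers; hence $A'$ is good. For $(1)\Rightarrow(3)$: since $(1)$ says every algebra in the class is good, in particular the minimal good representative $R_{[(A,\Phi,\unlhd)]}=\End{A}{\bigoplus_i P_i^{l_i}}\op$ is good, but we need more — actually I should argue that if some $l_{i_0}>1$ then I can build an algebra in $[(A,\Phi,\unlhd)]$ that is \emph{not} good. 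The natural candidate is the basic algebra $A_{\mathrm{basic}}$ in the class (Morita reduction of $A$), for which $\dim L_i^{A_{\mathrm{basic}}}=1$ for all $i$; by Theorem \ref{thm:mainsuper}(1) this is good iff the unique solution of $V x=(1)_i$ has all entries positive integers. If $V\neq\mathrm{Id}$, I must show this solution fails to be a positive-integer vector. This is where some care is needed: $V^{-1}$ is integral (lower triangular, ones on diagonal), so the solution $V^{-1}(1)_i$ is automatically an integer vector; the obstruction must be \emph{positivity}. So the real content is: \emph{if $V$ is lower triangular unipotent with nonnegative integer entries and $V\neq\mathrm{Id}$, then $V^{-1}(1,\dots,1)^T$ has a non-positive (i.e.\ $\leq 0$) entry.}

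I expect this positivity lemma to be the main obstacle, and I would handle it by induction along the total order refining $(\Phi,\unlhd)$. Writing $k=(k_i)=V^{-1}(1)$, the defining relation unwinds to $k_i = 1 - \sum_{j<i} v_{ij}k_j$. If all earlier $k_j$ equal $1$ (induction hypothesis up to the first place where $V$ has a nonzero off-diagonal entry), then at the first index $i_0$ with some $v_{i_0 j}>0$ ($j<i_0$) we get $k_{i_0}=1-\sum_{j<i_0}v_{i_0 j}\le 0$, giving the desired non-positive entry; and if some earlier $k_j$ was already $\le 0$ we are done immediately. Thus $(1)\Rightarrow(4)$ via its contrapositive, closing the loop. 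For $(1)\Leftrightarrow(2)$: if every algebra in the class is good then in particular the basic one is, so by Corollary \ref{cor:mainprimitive}(2) (uniqueness of the algebra with a basic regular exact Borel subalgebra) combined with Theorem \ref{thm:mainsuper}(5), the basic algebra must coincide with $R_{[(A,\Phi,\unlhd)]}$, i.e.\ $R_{[(A,\Phi,\unlhd)]}$ is basic; conversely if $R_{[(A,\Phi,\unlhd)]}$ is basic then $\dim L_i=l_i=1$ forces, by the positivity discussion, $V=\mathrm{Id}$, hence $(4)$, hence $(1)$. Finally, the closing paragraph of the theorem: if $A$ itself is basic, then $\dim L_i^A=1$ for all $i$, so by Theorem \ref{thm:mainsuper}(1) and (5), $A$ is good iff $l_i=1$ for all $i$ iff $A$ contains a \emph{basic} regular exact Borel subalgebra (Theorem \ref{thm:mainsuper}(5)) iff condition $(3)$ holds — and $(3)$ is equivalent to $(1)$–$(8)$ by the first part. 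I would present the proof as: (a) invoke Proposition \ref{prop:almostlast} for $(3)$–$(8)$; (b) prove the linear-algebra positivity lemma; (c) deduce $(3)\Leftrightarrow(4)\Leftrightarrow(2)'$; (d) prove $(3)\Rightarrow(1)$ and $(1)\Rightarrow(4)$ using Theorem \ref{thm:mainsuper}; (e) handle $(1)\Leftrightarrow(2)$ via Corollary \ref{cor:mainprimitive}; (f) conclude the basic-algebra statement.
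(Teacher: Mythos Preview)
Your proposal is correct and follows essentially the same route as the paper. The equivalence of (3)--(8) is indeed packaged in Proposition~\ref{prop:almostlast}, and (2)$\Leftrightarrow$(3) comes straight from Corollary~\ref{cor:mainprimitive}. For the link with (1), both you and the paper use Theorem~\ref{thm:mainsuper}(1) applied to the basic representative of the class; the only cosmetic difference is that you isolate the linear-algebra fact (``if $V$ is unipotent lower triangular with nonnegative entries and $V\neq\mathrm{Id}$, then $V^{-1}(1,\dots,1)^T$ has a nonpositive entry'') as a separate positivity lemma and argue $(1)\Rightarrow(4)$ by contraposition, whereas the paper first proves the basic-algebra clause directly (from $1=\sum_j v_{ij}\dim L_j^B$ with $v_{ii}=1$, $v_{ij}\ge 0$, $\dim L_j^B\ge 1$ one forces $\dim L_i^B=1$, hence $B$ basic, hence $l_i=1$ by Theorem~\ref{thm:mainsuper}(5)) and then reduces the general $(1)\Rightarrow(2)$ to that clause. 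These two arguments are contrapositives of one another and rely on the same nonnegativity constraint, so there is no substantive difference.
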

\begin{proof}
By Corollary \ref{cor:mainprimitive}, the minimal good representative of $[(A,\Phi,\unlhd)]$ is basic if and only if $l_i=1$ for every $i\in\Phi$. The equivalence of the conditions (2) to (8) follows then from Proposition \ref{prop:almostlast}. We only show that assertion (1) is also equivalent to conditions (2) to (8) by the end of this proof.

For now assume, within this paragraph, that $A$ is basic.
We prove first that $(A,\Phi,\unlhd)$ has a regular exact Borel subalgebra if and only if it contains a basic regular exact Borel subalgebra. One of the implications is obvious. So suppose that $(A,\Phi,\unlhd)$ contains a regular exact Borel subalgebra $B$. We shall see that $B$ must be basic. In fact, by parts (1) and (2) of Theorem \ref{thm:mainsuper}, we have
\[	
1= \sum_{j \in \Phi}v_{ij}\dim L_j^B
\]
for every $i\in\Phi$. Since $v_{ij}\in \Znn$ and $v_{ii}=1$ (recall Theorem \ref{thm:algo1}), we must have $\dim L_i^B\leq 1$ for every $i\in\Phi$. Therefore $\dim L_i^B=1$ for all $i\in\Phi$ and $B$ is basic, as claimed. Recall now part (5) of Theorem \ref{thm:mainsuper} and note that $(A,\Phi,\unlhd)$ contains a basic regular exact Borel subalgebra if and only if $l_i=1$ for every $i\in\Phi$, that is, if and only if condition (3) in the statement of the theorem holds, i.e.~if and only if the equivalent conditions (2) to (8) hold for $(A,\Phi,\unlhd)$.

It remains to show that assertion (1) is equivalent to conditions (2) to (8) in the general setting where $(A,\Phi,\unlhd)$ is not necessarily basic. So suppose again that $(A,\Phi,\unlhd)$ is an arbitrary quasihereditary algebra. By part (1) of Theorem \ref{thm:mainsuper}, assertion (4) implies (1). Suppose now that condition (1) holds. It follows that the basic version of $(A,\Phi,\unlhd)$, say $(A',\Phi,\unlhd)$, has a regular exact Borel subalgebra. As seen in the previous paragraph, this implies that conditions (2) to (8) hold for $(A',\Phi,\unlhd)$. Since $[(A',\Phi,\unlhd)]=[(A,\Phi,\unlhd)]$, assertion (2) must hold $(A,\Phi,\unlhd)$. ~This concludes the proof of the theorem.
\end{proof}

\begin{ex}
Let $(A,\Phi,\unlhd)$ be a quasihereditary algebra such that all standard modules are simple. Then $\Rad{\Delta_i}=0$ for every $i \in \Phi$, so the radical of every standard module is $\nabla$-filtered. By Theorem \ref{thm:basicqh}, every quasihereditary algebra equivalent to $(A,\Phi,\unlhd)$ has a regular exact Borel subalgebra. In particular, $(A,\Phi,\unlhd)$ has a regular exact Borel subalgebra. Using the definition, it is easy to conclude that $A$ is its own regular exact Borel subalgebra.
\end{ex}

\begin{ex}
	Let $(A,\Phi,\unlhd)$ be a quasihereditary algebra such that all costandard modules are simple. Then $\Rad{\Delta_i}$ is trivially $\nabla$-filtered for every $i \in \Phi$ and Theorem \ref{thm:basicqh} assures that $(A,\Phi,\unlhd)$ (as well as every algebra in $[(A,\Phi,\unlhd)]$) has a regular exact Borel subalgebra. We claim that the semisimple algebra $A/\Rad{A}$ is a good exact Borel subalgebra of $(A,\Phi,\unlhd)$. In order to see this, observe first that $l_i=1$ for every $i \in \Phi$, by part (3) of Theorem \ref{thm:basicqh}. Part (2) of Theorem \ref{thm:mainthm2} subsequently implies that the injective indecomposable modules over a regular exact Borel subalgebra $B$ of $(A,\Phi,\unlhd)$ have to be simple, i.e.~$B$ must be isomorphic to $A/\Rad{A}$. The Wedderburn--Malcev Theorem (see Theorem $3.6.9$ in \cite{Zimmermann:1952386}) also confirms that $A/\Rad{A}$ is a subalgebra of $(A,\Phi,\unlhd)$ which is additionally a regular exact Borel subalgebra.
\end{ex}

\subsection{An application}

We conclude this paper by describing a class of quasihereditary algebras whose Ringel dual contains a basic regular exact Borel subalgebra. 

The \emph{Ringel dual} $\mathcal{R}(A)$ of a quasihereditary algebra $(A,\Phi, \unlhd)$ was first considered in \cite{last} and it is defined as $\End{A}{T}\op$, where the module $T$ is a multiplicity-free additive generator of the category $\mathcal{F}(\Delta) \cap \mathcal{F}(\nabla)$. It is common to refer to a module in $\mathcal{F}(\Delta) \cap \mathcal{F}(\nabla)$ as a \emph{tilting module}. The indecomposable tilting modules are in bijection with the elements of $\Phi$. We write $T=\bigoplus_{i \in \Phi} T_i$, where the module $T_i$ in $\mathcal{F}(\Delta) \cap \mathcal{F}(\nabla)$ is characterised by the following property: it is filtered by stardard modules of the form $\Delta_j$ with $j\unlhd i$ and satisfies $(T_i:\Delta_i)=1$. The simple modules over $\mathcal{R}(A)$ are therefore naturally indexed by $\Phi$ and the Ringel dual $\mathcal{R}(A)$ of $(A,\Phi, \unlhd)$ is actually a quasihereditary algebra with respect to the opposite poset $(\Phi, \unlhd\op)$ of $(\Phi, \unlhd)$. In fact, the category of $\Delta$-filtered modules over $(\mathcal{R}(A),\Phi, \unlhd\op)$ is equivalent to the category of $\nabla$-filtered modules over $(A,\Phi, \unlhd)$. According to the definition given in here, the Ringel dual is always a basic quasihereditary algebra.

We will show that the Ringel dual of the dual extension of the incidence algebra of a tree always has a basic good exact Borel subalgebra. First, some notions and notation must be introduced. Recall that a \emph{tree} $(\Phi, \unlhd)$ is a finite poset such that the subposet $\{j\in \Phi\mid j \unlhd i\}$ is linearly ordered for every $i\in \Phi$. Given a tree $(\Phi, \unlhd)$, consider an associated quiver $Q_{(\Phi, \unlhd)}$ constructed in the following way:
\begin{enumerate}
\item the vertices of $Q_{(\Phi, \unlhd)}$ are labelled by $\Phi$;
\item for every pair $j\lhd i$ with $j$ an immediate predecessor of $i$, draw one arrow $\alpha_{ji}$ from $j$ to $i$ and one arrow $\beta_{ij}$ from $i$ to $j$.
\end{enumerate}
Define $A_{(\Phi, \unlhd)}$ as the bound quiver algebra of $Q_{(\Phi, \unlhd)}$ with relations $\alpha_{jk}\beta_{ij}=0$, $i,j,k\in \Phi$. Let $i^-$ be the (unique) immediate predecessor of some $i$ in $(\Phi,\unlhd)$ and let $i_1^+, \ldots,i_m^+$ be the list of all elements in $\Phi$ having $i$ as an immediate predecessor. Locally, around the vertex $i$, the quiver $Q_{(\Phi, \unlhd)}$ may be depicted as
\begin{equation}
\label{eq:lastone}
\begin{tikzcd}[column sep= normal]
   &                                                                                                           & {} \arrow[d, no head, dotted]                                                                                                        &                                                                                                           &    \\
   &                                                                                                           & \overset{i^-}{\circ} \arrow[d,phantom, symbol=] \arrow[ld, no head, dotted] \arrow[rd, no head, dotted] \arrow[d, "\alpha_{i^{-}i}"', near end,bend right]                                &                                                                                                           &    \\
   & {}                                                                                                        & \overset{i}{\circ} \arrow[dr,phantom, symbol=, near start] \arrow[dl,phantom, symbol=, near start] \arrow[ld, "\alpha_{ii^{+}_1}"', very near end, bend right] \arrow[rd, "\alpha_{ii^{+}_m}"',very near end, bend right] \arrow[u, "\beta_{ii^{-}}"', near start, bend right] & {}                                                                                                        &    \\
   & \overset{i_1^+}{\circ} \arrow[rd, no head, dotted] \arrow[ld, no head, dotted] \arrow[ru, "\beta_{i_1^{+}i}"', very near start,bend right] & \cdots                                                                                                                               & \overset{i_m^+}{\circ} \arrow[ld, no head, dotted] \arrow[rd, no head, dotted] \arrow[lu, "\beta_{i_m^{+}i}"', very near start, bend right] &    \\
{} &                                                                                                           & {}                                                                                                                                   &                                                                                                           & {}
\end{tikzcd}.
\end{equation}

If $(\Phi, \unlhd)$ is a tree, then the \emph{dual extension of the incidence algebra} of the poset $(\Phi, \unlhd)$ coincides with the algebra $A_{(\Phi, \unlhd)}$. We refer to \cite[§$1.6$]{xi1994quasi}, \cite[§$1.3$]{MR1285702} and \cite[§§$1.2$--$1.5$]{MR1337121} for the general definition of dual extension algebra and of incidence algebra of a poset and point out that the incidence algebra of a tree is a hereditary algebra. According to \cite[§$1.6$]{xi1994quasi} (see also \cite[§$1.7$]{MR1337121}), the algebra $A_{(\Phi, \unlhd)}$ is quasihereditary with respect to $(\Phi, \unlhd)$ and its Ringel dual has been studied in \cite{MR1690433}. We prove the following result concerning the Ringel dual of $(A_{(\Phi, \unlhd)},\Phi, \unlhd)$ for a tree $(\Phi, \unlhd)$.

\begin{prop}
\label{prop:last}
The Ringel dual of the dual extension of the incidence algebra of a tree has a basic regular exact Borel subalgebra. 
\end{prop}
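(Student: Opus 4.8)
The plan is to apply the criterion of Theorem~\ref{thm:basicqh} to the Ringel dual and to transport the resulting condition across Ringel duality. Write $A:=A_{(\Phi,\unlhd)}$ and let $R:=\mathcal{R}(A)$ be its Ringel dual, so that $R$ is basic and quasihereditary with respect to $(\Phi,\unlhd\op)$. Since $R$ is basic, Theorem~\ref{thm:basicqh} reduces the statement to be proved to the assertion that $\Rad{\Delta_i^R}$ lies in $\mathcal{F}(\nabla^R)$ for every $i\in\Phi$, equivalently (by~\eqref{eq:fdelta1}) that $\Ext{R}{1}{\Delta_j^R}{\Rad{\Delta_i^R}}=0$ for all $i,j\in\Phi$.

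First I would record the structure of $A$. The incidence algebra of the tree $(\Phi,\unlhd)$ is hereditary and, by the description of dual extension algebras (\cite{xi1994quasi,MR1285702}), the standard module $\Delta_i^A$ and the costandard module $\nabla_i^A$ are uniserial with composition factors $L_k$, $k\unlhd i$ (the set $\{k\in\Phi\mid k\unlhd i\}$ being a chain), obtained by inflation, respectively, from a projective and an injective module over a hereditary algebra; moreover $A\op\cong A$ by the symmetry of the doubled-tree quiver $Q_{(\Phi,\unlhd)}$ and of its defining relations. Two consequences I would single out: $\Rad{\Delta_i^A}\cong\Delta_{i^-}^A\in\mathcal{F}(\Delta^A)$ (the radical of a projective over a hereditary algebra is again projective), and — the point that makes the Ringel dual behave well — $\nabla_i^A/\Soc{\nabla_i^A}\cong\nabla_{i^-}^A\in\mathcal{F}(\nabla^A)$ for every $i$ (the cosocle quotient of an injective over a hereditary algebra is again injective), with the convention that $\Delta_{i^-}^A$ and $\nabla_{i^-}^A$ are zero when $i$ is minimal.

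Next I would transport the problem to $A$ via Ringel duality. The Ringel functor $\Hom{A}{T^A}{-}$ restricts to an equivalence $\mathcal{F}(\nabla^A)\xrightarrow{\sim}\mathcal{F}(\Delta^R)$ sending $\nabla_i^A\mapsto\Delta_i^R$, $T_i^A\mapsto P_i^R$ and $Q_i^A\mapsto T_i^R$; in particular it induces an isomorphism $\Ext{R}{1}{\Delta_j^R}{\Delta_i^R}\cong\Ext{A}{1}{\nabla_j^A}{\nabla_i^A}$, the equality $\dim\Hom{R}{\Delta_i^R}{\Delta_j^R}=\dim\Hom{A}{\nabla_i^A}{\nabla_j^A}$, and expressions for the $\Delta^R$- and $\nabla^R$-decomposition matrices of $R$ in terms of the $\Delta^A$- and $\nabla^A$-filtration multiplicities of the tilting modules $T_j^A$. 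Using that $\Hom{R}{\Delta_j^R}{L_i^R}$ is one-dimensional for $i=j$ and zero otherwise, that $\Hom{R}{\Delta_i^R}{\Delta_i^R}\to\Hom{R}{\Delta_i^R}{L_i^R}$ is surjective, and that $\Ext{R}{1}{\Delta_j^R}{\Delta_i^R}=0$ unless $i\lhd j$ (the quasihereditary condition for $R$ with respect to $\unlhd\op$), the long exact sequence attached to $0\to\Rad{\Delta_i^R}\to\Delta_i^R\xrightarrow{\pi_i}L_i^R\to 0$, where $\pi_i$ is the canonical epimorphism, gives $\Ext{R}{1}{\Delta_i^R}{\Rad{\Delta_i^R}}=0$ automatically and reduces the remaining cases to showing that $\Ext{R}{1}{\Delta_j^R}{\pi_i}\colon\Ext{R}{1}{\Delta_j^R}{\Delta_i^R}\to\Ext{R}{1}{\Delta_j^R}{L_i^R}$ is injective whenever $i\lhd j$ — precisely condition~(8) of Theorem~\ref{thm:basicqh} for $R$, which I would derive from the structural facts above together with the identification $\Ext{R}{1}{\Delta_j^R}{\Delta_i^R}\cong\Ext{A}{1}{\nabla_j^A}{\nabla_i^A}$. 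Equivalently, one feeds the data above into the recursion~\eqref{eq:multiplicities} for $R$ and checks that the resulting sequence $(l_i^R)_{i\in\Phi}$ is constant equal to $1$, i.e.\ that $V_{[(R,\Phi,\unlhd\op)]}$ is the identity matrix, which is condition~(3).

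The main obstacle is this last verification, which is essentially a bookkeeping computation: one must pin down the $\Delta^A$- and $\nabla^A$-filtration multiplicities of the indecomposable tilting modules of $A_{(\Phi,\unlhd)}$ — equivalently, the quiver and relations of $\mathcal{R}(A_{(\Phi,\unlhd)})$, which have been computed in \cite{MR1690433} — while keeping careful track of the reversal of the poset in passing to the Ringel dual. Once the tilting modules are made explicit, the verification of condition~(8) (equivalently (3) or (6)) of Theorem~\ref{thm:basicqh} is a finite, if intricate, multiplicity computation in the spirit of Examples~\ref{ex:second} and~\ref{ex:third}, and it yields the desired basic regular exact Borel subalgebra of $\mathcal{R}(A_{(\Phi,\unlhd)})$.
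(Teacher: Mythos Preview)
Your proposal correctly identifies the reduction via Theorem~\ref{thm:basicqh} and the need to transport the problem across Ringel duality, and this is exactly how the paper begins. However, the proof remains incomplete at the decisive point: you defer the verification that $l_i=1$ (equivalently, condition~(8)) to ``a finite, if intricate, multiplicity computation'' without indicating why this computation actually closes. For a general tree this is not a routine check, and the structural observations you record about $\nabla_i^A/\Soc{\nabla_i^A}\cong\nabla_{i^-}^A$ do not transfer to information about $\Rad{\Delta_i^R}$, since the Ringel functor is only an equivalence on $\mathcal{F}(\nabla^A)$ and $L_i^A$ does not lie there; in particular there is no direct handle on $\Ext{R}{1}{\Delta_j^R}{L_i^R}$ from the $A$-side, so your route through condition~(8) stalls precisely where you need it.

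The paper instead works with condition~(3) and carries the recursion~\eqref{eq:multiplicities} for $(\mathcal{R}(A_{(\Phi,\unlhd)}),\Phi,\unlhd\op)$ entirely back to tilting multiplicities over $A$, using that $\dim\Hom{A}{\nabla_j}{\nabla_i}=1$ for $i\lhd j$ and the simple-preserving duality $(T_k:\nabla_j)=(T_k:\Delta_j)$. The substance of the argument is a combinatorial identity you are missing, namely Lemma~\ref{lem:last}: from the short exact sequences $0\to\Delta_k\to T_k\to\bigoplus_{l\lhd k}T_l\to 0$ supplied by \cite[Lemma~2.1]{MR1396858} one proves, by induction on the tree, that
\[
(T_k:\Delta_i)=\sum_{\substack{j\in\Phi\\ i\lhd j\unlhd k}}(T_k:\Delta_j)
\]
for all $i\lhd k$. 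Plugging this into the transported recursion makes every bracket vanish and forces $l_i=1$. Without isolating this identity (or an equivalent one), the ``bookkeeping'' you allude to does not terminate in any evident way.
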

\begin{rem}
Using Theorem \ref{thm:basicqh}, it is possible to rephrase the statement of Proposition \ref{prop:last} in multiple equivalent ways. In particular, it shall follow from Proposition \ref{prop:last} that the algebra $(\mathcal{R}(A_{(\Phi, \unlhd)}),\Phi, \unlhd\op)$ satisfies all the conditions in the statement of Theorem \ref{thm:basicqh} for any choice of a tree $(\Phi, \unlhd)$.
\end{rem}
Lemma \ref{lem:last}, together with Theorem \ref{thm:basicqh} and \cite[Lemma $2.1$]{MR1396858}, will be the key ingredients in the proof of Proposition \ref{prop:last}.
\begin{lem}
	\label{lem:last}
	Let $(A,\Phi, \unlhd)$ be a quasihereditary algebra and let $m\in \Phi$. Suppose that there exist short exact sequences
	\begin{equation}
	\label{eq:ses2}
	\begin{tikzcd}[ampersand replacement=\&]
	0 \arrow{r} \& \Delta_k \arrow{r} \& T_k \arrow{r} \& \bigoplus\limits_{\substack{l \in \Phi \\ l\lhd  k}} T_l \arrow{r} \& 0
	\end{tikzcd}
	\end{equation}
	for every $k\in \Phi$ with $k\unlhd m$. The following identity holds for every $i,k\in \Phi$ with $i \lhd k\unlhd m$: 
	\begin{equation}
	\label{eq:reallylast}
	(T_k :\Delta_i)=\sum_{\substack{j \in \Phi \\ i\lhd j\unlhd k}} (T_k: \Delta_j).
	\end{equation}
\end{lem}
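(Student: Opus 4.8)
The plan is to reduce the statement to the additivity of $\Delta$-filtration multiplicities on short exact sequences and then run a telescoping induction on $k$. As a preliminary step I would note that for every $k\unlhd m$ the module $T_k$ lies in $\mathcal{F}(\Delta)$: this follows by induction on $k$ with respect to $\unlhd$, since $\mathcal{F}(\Delta)$ is closed under extensions, $\Delta_k\in\mathcal{F}(\Delta)$, and the quotient $\bigoplus_{l\lhd k}T_l$ is $\Delta$-filtered by the inductive hypothesis (each such $l$ satisfies $l\lhd m$). Hence the multiplicities $(T_k:\Delta_j)$ are well defined by \cite[Lemma 2.4]{MR1211481}. Next I would record that $(-:\Delta_j)$ is additive on short exact sequences whose three terms lie in $\mathcal{F}(\Delta)$: by the Bernstein--Gelfand--Gelfand Reciprocity Law (Remark \ref{rem:bgg}) one has $(X:\Delta_j)=\dim\Hom{A}{X}{\nabla_j}$ for $X\in\mathcal{F}(\Delta)$, and $\Ext{A}{1}{X}{\nabla_j}=0$ for such $X$ by \eqref{eq:fdelta1}, so applying $\Hom{A}{-}{\nabla_j}$ keeps such a sequence exact. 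Feeding \eqref{eq:ses2} into this and using $(\Delta_k:\Delta_j)=\delta_{kj}$ gives the basic recursion $(T_k:\Delta_j)=\delta_{kj}+\sum_{l\lhd k}(T_l:\Delta_j)$.

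From this recursion I would deduce, by a further induction on $k$ with respect to $\unlhd$, that $(T_k:\Delta_k)=1$ and $(T_k:\Delta_j)=0$ whenever $j\ntrianglelefteq k$ (if $j\ntrianglelefteq k$ then $j\neq k$ and $j\ntrianglelefteq l$ for every $l\lhd k$, so every term on the right vanishes). Consequently, for $j\lhd k\unlhd m$, the recursion sharpens to
\[(T_k:\Delta_j)=\sum_{j\unlhd l\lhd k}(T_l:\Delta_j),\]
and this identity, which I will call $(\star)$, is the engine of the whole argument; it will be invoked twice, once with lower index $i$ to initiate the computation and once, for each $j$ with $i\lhd j\lhd k$, to re-collapse the resulting double sum.

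The core is then an induction on $k$ with respect to $\unlhd$, proving \eqref{eq:reallylast} for all $i\lhd k\unlhd m$ and assuming it for all pairs $(i',k')$ with $k'\lhd k$ and $i'\lhd k'$. Fixing $i\lhd k\unlhd m$, I would apply $(\star)$ with lower index $i$ and peel off the term $l=i$, where $(T_i:\Delta_i)=1$, to get $(T_k:\Delta_i)=1+\sum_{i\lhd l\lhd k}(T_l:\Delta_i)$. For each $l$ with $i\lhd l\lhd k$ the inductive hypothesis applies to $(i,l)$ and rewrites $(T_l:\Delta_i)=\sum_{i\lhd j\unlhd l}(T_l:\Delta_j)$. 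Substituting and exchanging the order of summation — the pairs $(l,j)$ with $i\lhd l\lhd k$ and $i\lhd j\unlhd l$ are exactly those with $i\lhd j\lhd k$ and $j\unlhd l\lhd k$ — turns the double sum into $\sum_{i\lhd j\lhd k}\sum_{j\unlhd l\lhd k}(T_l:\Delta_j)$, and by $(\star)$ the inner sum is $(T_k:\Delta_j)$. This leaves $(T_k:\Delta_i)=1+\sum_{i\lhd j\lhd k}(T_k:\Delta_j)=(T_k:\Delta_k)+\sum_{i\lhd j\lhd k}(T_k:\Delta_j)=\sum_{i\lhd j\unlhd k}(T_k:\Delta_j)$, which is \eqref{eq:reallylast}.

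All steps are elementary, and no hypothesis beyond the existence of the sequences \eqref{eq:ses2} enters (in particular no tree assumption). The only point demanding care is the bookkeeping of the index ranges when the double sum is exchanged, together with keeping straight the two applications of $(\star)$ (at the lower index $i$, and at each lower index $j$ with $i\lhd j\lhd k$); beyond this I do not anticipate a genuine obstacle.
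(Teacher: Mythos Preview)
Your proof is correct and follows essentially the same route as the paper's: both arguments use the additivity of $(-:\Delta_j)$ on the short exact sequence \eqref{eq:ses2} to obtain the recursion $(T_k:\Delta_j)=\delta_{kj}+\sum_{l\lhd k}(T_l:\Delta_j)$, then run an induction with the same exchange of the order of summation. The paper computes the right-hand side of \eqref{eq:reallylast} first and matches it with the left-hand side, whereas you start from the left and reach the right; your version is also a little more explicit in first verifying that each $T_k$ lies in $\mathcal{F}(\Delta)$ and isolating the sharpened recursion $(\star)$, but the substance is identical.
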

\begin{proof}
	We show \eqref{eq:reallylast} by induction on $m\in \Phi$. Suppose first that $m$ is an element in $(\Phi, \unlhd)$ whose immediate predecessors are all minimal in $(\Phi, \unlhd)$. If $m^-$ is an immediate predecessor of $m$, then $(T_m :\Delta_{m^-})=1$ by \eqref{eq:ses2} and
	\[\sum_{\substack{j \in \Phi \\ m^- \lhd j\unlhd m}} (T_m: \Delta_j)=(T_m:\Delta_m)=1.
	\]
	Hence the base case of the induction holds. 
	
	For $m\in \Phi$ arbitrary consider $i,k \in \Phi$ with $i \lhd k\unlhd m$. Using \eqref{eq:ses2}, we obtain
	\begin{align*}
	\sum_{\substack{j \in \Phi \\ i\lhd j\unlhd k}} (T_k: \Delta_j)&= \sum_{\substack{j \in \Phi \\ i\lhd j\unlhd k}} \left( (\Delta_k: \Delta_j)+ \sum_{\substack{l \in \Phi \\ l\lhd k}} (T_l:\Delta_j)\right) \\
	&=(\Delta_k: \Delta_k)+ \sum_{\substack{l \in \Phi \\ l\lhd k}} (T_l:\Delta_k) + \sum_{\substack{j \in \Phi \\ i\lhd j\lhd k}} \sum_{\substack{l \in \Phi \\ l\lhd k}} (T_l:\Delta_j) \\
	&=1+ \sum_{\substack{l \in \Phi \\ l\lhd k}}\sum_{\substack{j \in \Phi \\ i\lhd j\lhd k}}  (T_l:\Delta_j) =1+ \sum_{\substack{l \in \Phi \\ l\lhd k}}\sum_{\substack{j \in \Phi \\ i\lhd j\unlhd l}}  (T_l:\Delta_j) \\
	&=1+ \sum_{\substack{l \in \Phi \\  i\lhd l\lhd k}}\sum_{\substack{j \in \Phi \\ i\lhd j\unlhd l}}  (T_l:\Delta_j) =1+ \sum_{\substack{l \in \Phi \\  i\lhd l\lhd k}}(T_l:\Delta_i) ,
	\end{align*}
	where the last equality follows from the induction hypothesis. Using \eqref{eq:ses2}, we get
	\[(T_k:\Delta_i)=(\Delta_k:\Delta_i)+\sum_{\substack{l \in \Phi \\ l\lhd k}} (T_l:\Delta_i)=\sum_{\substack{l \in \Phi \\ i\unlhd l \lhd k}} (T_l:\Delta_i)=1+ \sum_{\substack{l \in \Phi \\ i\lhd l \lhd k}} (T_l:\Delta_i).\]
	This completes the proof.
\end{proof}

\begin{proof}[Proof of Proposition \ref{prop:last}]
Let $(\Phi, \unlhd)$ be a tree. Observe that the quasihereditary algebra $(\mathcal{R}(A_{(\Phi, \unlhd)}),\Phi, \unlhd\op)$ is basic by the definition of Ringel dual. By Theorem \ref{thm:basicqh}, it is enough to prove that the sequence of integers $(l_i)_{i\in\Phi}$ associated to $(\mathcal{R}(A_{(\Phi, \unlhd)}),\Phi, \unlhd\op)$ is constant and equal to one. Use the symbol $'$ to denote the modules over $\mathcal{R}(A_{(\Phi, \unlhd)})$. By applying \eqref{eq:multiplicities} and the Bernstein--Gelfand--Gelfand Reciprocity Law (see Remark \ref{rem:bgg}), we get
\begin{align*}
l_i&=1+ \sum_{\substack{j,k \in \Phi \\ k\unlhd\op j \lhd\op i}}  l_k  (P_k': \Delta_j') \dim\left( \Hom{\mathcal{R}(A_{(\Phi, \unlhd)})}{\Delta_j'}{\Delta_i'}\right)  - \sum_{\substack{k\in \Phi \\  k \lhd\op i}} l_k  (Q_k' :\nabla_i')  \\
&=1+ \sum_{\substack{j,k \in \Phi \\ i\lhd j \unlhd k}}  l_k  (T_k: \nabla_j) \dim\left( \Hom{A_{(\Phi, \unlhd)}}{\nabla_j}{\nabla_i}\right)  - \sum_{\substack{k\in \Phi \\  i \lhd k}} l_k  (T_k :\Delta_i) ,
\end{align*}
where the last equality results from the properties of the Ringel duality (see \cite[§6]{last}). By looking at \eqref{eq:lastone}, one easily sees that the costandard $A_{(\Phi, \unlhd)}$-modules are uniserial. In fact, the composition factors of $\nabla_i$ are in bijection with $\{j\in\Phi\mid j\unlhd i\}$ and the vector space $\Hom{A_{(\Phi, \unlhd)}}{\nabla_j}{\nabla_i}$ is $1$-dimensional for every $i,j\in \Phi$ with $i \lhd j$. As a result,
\begin{align*}
l_i &=1+ \sum_{\substack{k \in \Phi \\ i\lhd k}}  l_k \left(  \sum_{\substack{j \in \Phi \\ i\lhd j\unlhd k}} (T_k: \nabla_j) \right)  - \sum_{\substack{k\in \Phi \\  i \lhd k}} l_k  (T_k :\Delta_i) \\
&= 1+ \sum_{\substack{k \in \Phi \\ i\lhd k}}  l_k \left(  \left( \sum_{\substack{j \in \Phi \\ i\lhd j\unlhd k}} (T_k: \Delta_j) \right)  -  (T_k :\Delta_i) \right)  ,
\end{align*}
where the last identity follows from the fact that $\Mod{A_{(\Phi, \unlhd)}}$ has a simple preserving duality (see \cite[§§$1.3$, $1.4$, $1.6$]{xi1994quasi}). By \cite[Lemma $2.1$]{MR1396858} and Lemma \ref{lem:last}, we obtain $l_i=1$ for every $i\in \Phi$.
\end{proof}

\begin{ex}
\label{ex:last}
Fix $n\in \N$ and let $(A',\underline{n},\leq)$ be the quasihereditary algebra with $n$ simple modules discussed in Example \ref{ex:third}. This is the dual extension algebra of the incidence algebra of the tree $(\underline{n},\leq)$, that is, $A'=A_{(\underline{n},\leq)}$. By Proposition \ref{prop:last}, the Ringel dual of $(A',\underline{n},\leq)$ has a basic regular exact Borel subalgebra.
\end{ex}

\bibliographystyle{amsplain}
\bibliography{QHAlg}

\end{document}